\newtheorem{thm}{Theorem}[section]
\newtheorem{lem}[thm]{Lemma}
\newtheorem{cor}[thm]{Corollary}
\newtheorem{mydef}{Definition}[section]
\newtheorem{rem}{Remark}[section]
\newcommand{\bFormula}[1]{
\begin{equation} \label{#1}}
\newcommand{\eF}{\end{equation}}
\newcommand{\vu}{{\bf u}}
\newcommand{\R}{{\mathbb{R}}}
\newcommand{\N}{{\mathbb{N}}}
\newcommand{\bfeta}{{\boldsymbol\eta}}
\DeclareMathOperator{\supp}{supp}
\DeclareMathOperator{\dist}{dist}
\newcommand{\norm}[2][]{\left\|#2\right\|_{#1}}
\theoremstyle{remark}
\newcommand\bb[1]{\mathbf{#1}}
\newcommand\ddfrac[2]{\frac{\displaystyle #1}{\displaystyle #2}}
\newcommand\bint[1]{\displaystyle\int #1}
\numberwithin{equation}{section}
\date{}
\newcommand{\Addresses}{{
  \bigskip
  \footnotesize

Malte Kampschulte, \textsc{Department of Mathematical Analysis, Faculty of Mathematics and Physics, Charles University, Prague, Czech Republic}\par\nopagebreak
  \textit{E-mail address}: \texttt{kampschulte@karlin.mff.cuni.cz}

\medskip

  Boris Muha, \textsc{Department of Mathematics, Faculty of Science, University of Zagreb}\par\nopagebreak
  \textit{E-mail address}: \texttt{borism@math.hr}

\medskip
Sr\dj{}an Trifunovi\'{c}, \textsc{Department of Mathematics and Informatics, Faculty of Sciences, University of Novi Sad}\par\nopagebreak
  \textit{E-mail address}: \texttt{srdjan.trifunovic@dmi.uns.ac.rs}

}}
\title{Global weak solutions to a 3D/3D fluid-structure interaction problem including possible contacts}
\author{Malte Kampschulte, Boris Muha, Sr\dj an Trifunovi\'c}
\begin{document}

\maketitle
\begin{abstract}
In this paper, we study an interaction problem between a $3D$ compressible viscous fluid and a $3D$ nonlinear viscoelastic solid fully immersed in the fluid, coupled together on the interface surface. The solid is allowed to have self-contact or contact with the rigid boundary of the fluid container. For this problem, a global weak solution with defect measure is constructed by using a multi-layered approximation scheme which decouples the body and the fluid by penalizing the fluid velocity and allowing the fluid to pass through the body, while the body is supplemented with a contact-penalization term. The resulting defect measure is a consequence of pressure concentrations that can appear where the fluid meets the (generally irregular) points of self-contact of the solid. Moreover, we study some geometrical properties of the fluid-structure interface and the contact surface. In particular, we prove a lower bound on area of the interface.
\end{abstract}
\textbf{Keywords and phrases:} {fluid-structure interaction, compressible viscous fluid, second-grade viscoelasticity}
\\${}$ \\
\textbf{AMS Mathematical Subject classification (2020):} {74F10 (Primary), 76N06, 74B20 , 74M15 (Secondary)}


\section{Introduction}
\subsection{Problem definition}
We consider the motion of $3D$ viscoelastic solid fully immersed into a barotropic compressible fluid contained in a container with rigid walls. Let $\Omega_S$, $\Omega\subset\R^3$ be open, bounded, connected, $C^{1,\alpha}$ sets, $\alpha>0$, representing the reference position of the viscoelastic solid and the container, respectively. The unknowns of the system are the following:
\begin{enumerate}
  \item[] The elastic solid deformation  $\dotfill$ $\bfeta :(0,T)\times {\Omega_S}\to \Omega$;  $\qquad$  $\qquad$
  \item[] The fluid velocity $\dotfill$ $\mathbf{u}:(0,T)\times \Omega_F^{\bfeta }(t)\to \mathbb{R}^3$; $\qquad$  $\qquad$
  \item[] The fluid density $\dotfill$ $\rho:(0,T)\times \Omega_F^{\bfeta }(t)\to \mathbb{R}$;  $\qquad$  $\qquad$
  \item[] The elastic solid at time $t$ $\dotfill$ $\Omega^\bfeta_S(t):=\text{int}(\bfeta(t,\overline{\Omega_S}))$; $\qquad$  $\qquad$
   \item[] The fluid domain at time $t$ $\dotfill$ $\Omega_F^{\bfeta }(t):=\text{int}(\Omega\setminus \Omega_S^\bfeta(t))$. $\qquad$  $\qquad$
\end{enumerate}
Notice that such a non-standard definition for fluid and solid domains (see Figure \ref{fig:setup}) is used because we want to include the interior points of self-contact into the structure domain, rather than into the fluid domain. This happens when $\bfeta(t)$ is not injective on $\partial\Omega_S$. We assume $\Omega_S^\bfeta(0)\neq\Omega$, i.e.\ the solid does not completely fill the container at the initial time. We will prove that, as a consequence of conservation of mass and boundedness of energy, that the same is true at any time. Moreover, for time-dependent sets, by slight abuse of notation, we will write $I \times S(t):=\cup_{t\in I} \{t\}\times S(t)$, and some time-space domains which will be frequently used will be denoted as:
\begin{eqnarray*}
     Q_{S,T}:=(0,T)\times {\Omega_S},\quad Q_{S,T}^\bfeta:=(0,T)\times \Omega_S^{\bfeta }(t), \quad  Q_{F,T}^{\bfeta }:=(0,T) \times \Omega_F^{\bfeta }(t), \quad Q_T:=(0,T)\times \Omega.
\end{eqnarray*}

\begin{figure}[h!]
    \centering
    \includegraphics[width=\textwidth]{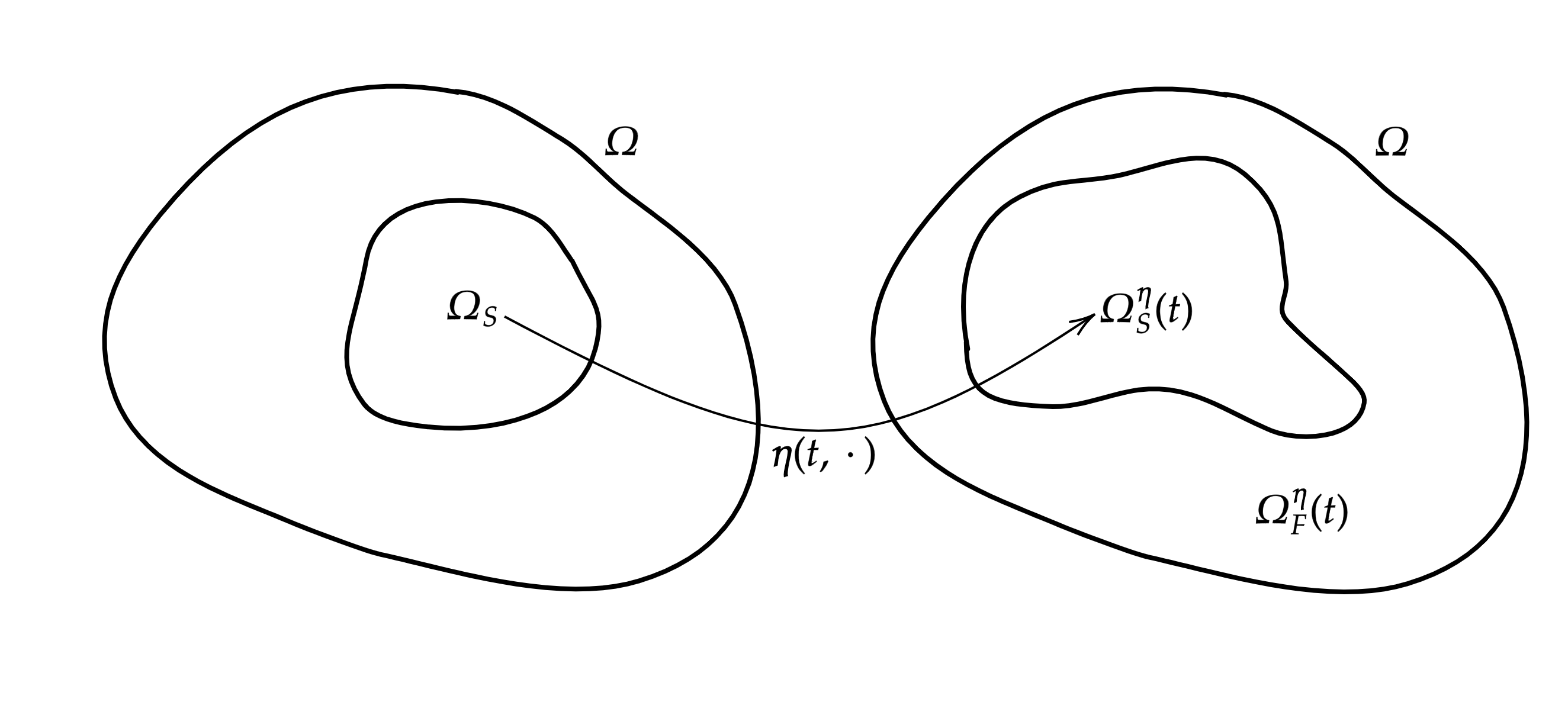}
    \caption{Lagrangian coordinates (left) and physical coordinates (right) at a time $t$.}
    \label{fig:setup}
\end{figure}
\subsubsection{Elastodynamics}\label{sec:el}
The motion of the solid is given by nonlinear elastodynamics equations:
\begin{eqnarray}
    \partial_{t}^2 {\bfeta } =\nabla \cdot \boldsymbol\sigma (\bfeta,\partial_t\bfeta)\quad{\rm in}\quad  Q_{S,T},\label{elasticity}
    \end{eqnarray}
    where $\boldsymbol\sigma$ is the first Piola-Kirchhoff stress tensor. We require that deformation satisfies the Ciarlet-Ne\v cas condition introduced in \cite{ciarletInjectivitySelfcontactNonlinear1987}, i.e.\ it is orientation preserving and injective on the interior of $\Omega_S$. Therefore we define the following function space for the deformations:
\begin{eqnarray*}
    \mathcal{E}^q:=\left\{ \bfeta\in W^{2,q}(\Omega_S):|\bfeta(\Omega_S)|=\int_{\Omega_s} \det \nabla\bfeta, \quad \inf_{\Omega_S} \det\nabla\bfeta >0,\quad  \overline{\Omega_S^\bfeta}\subseteq \overline{\Omega} \right\},
\end{eqnarray*}
for $q>3$. We also prescribe the following boundary condition when the solid is in contact with the rigid boundary, which states that the solid does not penetrate or slip on the rigid boundary:
\begin{align}\label{elasticityBC}
    \partial_t\bfeta(t,x)=0, \quad{\rm for}\quad t\in (0,T) \text{ and } x\in \partial\Omega_S \text{ such that } \bfeta(t,x)\in \partial\Omega.
\end{align}
Here we consider nonlinearly elastic material of second order, i.e.\ a generalized standard material (see e.g. \cite{halphenMateriauxStandardGeneralises1975}, \cite[Sec. 2.5]{kruzik2019mathematical}) with constitutive law:
\begin{eqnarray*}
    \nabla\cdot \boldsymbol\sigma (\bfeta,\partial_t\bfeta) := -DE(\bfeta)- D_2R(\bfeta,\partial_t \bfeta), \quad \text{ in } \mathcal{D}'(Q_{S,T}),
\end{eqnarray*}
where $E$ is the energy functional, $R$ is the dissipation functional, $D$ is the Fr\'{e}chet derivative and $D_2$ is the Fr\'{e}chet derivative with respect to the second argument. Since we are working with a second-gradient material, we must impose another boundary condition on $\nabla^2 \bfeta$. This condition is chosen so that the energy in our coupled system is conserved properly. Since it is cumbersome to write it explicitly, we can define it implicitly by imposing that
\begin{eqnarray}
    \int_{\Omega_S} \boldsymbol\sigma(\bfeta,\partial_t\bfeta): \nabla\boldsymbol\xi =\langle DE(\bfeta),\boldsymbol\xi \rangle+ \langle D_2R(\bfeta,\partial_t \bfeta), \boldsymbol\xi \rangle, \quad \text{ for all }\boldsymbol\xi\in C^\infty(\Omega_S). \label{2nd:order:bnd}
\end{eqnarray}

Finally, we assume that $E$ and $R$ satisfy the following conditions:
\begin{itemize}
    \item[(E1)]  $E: W^{2,q}(\Omega_S)\to \mathbb{R} \cup \{+\infty\}$  is bounded from below, weakly lower-semicontinuous and coercive;
    \item[(E2)] For every $E_0 > 0$ there is a $\varepsilon_0 > 0$ such that $\det \nabla \bfeta(x) > \varepsilon_0$ for all $x \in \Omega_S$ and all $\boldsymbol \bfeta$ with $E(\boldsymbol \bfeta) < E_0$;
    \item[(E3)] Whenever it is finite, $E$ is Fréchet-differentiable with a derivative $DE: \{\bfeta \in W^{2,q}(\Omega_S): E(\bfeta) < \infty\} \to [W^{2,q}(\Omega_S)]'$ which is continuous with respect to weak topology;
    \item[(E4)] There is a Minty-type property: If $\bfeta_k \rightharpoonup \bfeta$ in $W^{2,q}(\Omega_S)$ and $E(\bfeta_k)$ is bounded, then $\liminf_{k\to\infty} \langle DE(\bfeta_k)-DE(\bfeta),\psi (\bfeta_k-\bfeta)\rangle\geq0$, where $\psi \in C_c(\Omega_S;[0,1])$ is an arbitrary cut-off function. Furthermore, if also $\limsup_{k\to \infty} \langle DE(\bfeta_k) - DE(\bfeta), \psi (\bfeta_k - \bfeta) \rangle \leq 0$ for all such $\psi$, then this implies $\bfeta_k \to \bfeta$ in $W^{2,q}(\Omega_S)$;
    \item[(R1)] $R: W^{2,q}(\Omega_S) \times W^{1,2}(\Omega_S) \to \mathbb{R}$ is weakly lower semicontinuous,  2-homogeneous in the second argument, i.e. $R(\boldsymbol \bfeta, \lambda \boldsymbol b) = \lambda^2 R(\boldsymbol \bfeta, \boldsymbol b)$ for all $\lambda \in \mathbb{R}$ and continuously Fréchet-differentiable with respect to the second argument;
    \item[(R2)] There exists a (possibly energy dependent) Korn-inequality: For any fixed $E_0 > 0$ and all $\bfeta \in \mathcal{E}^q$ with $E(\bfeta) \leq E_0$, we have $\|b\|_{W^{1,2}} \leq c(E_0) (\|b\|_{L^2} + R(\bfeta,b))$ for all $b \in W^{1,2}(\Omega_S)$.
\end{itemize}

A prototypical example of such an energy-dissipation pair is of the form
\begin{align*}
    R(\bfeta,\partial_t \bfeta) &:= \int_{\Omega_S} | \partial_t \nabla \bfeta^\tau \nabla \bfeta + \nabla \bfeta^\tau \partial_t \nabla \bfeta |^2, \\
    E(\bfeta) &:= \begin{cases} \int_{\Omega_S} \mathcal{C}(\nabla \bfeta^\tau \nabla \bfeta -I) + \frac{1}{\det (\nabla \bfeta)^a} + \frac{1}{q} |\nabla^2 \bfeta |^q, & \text{ if } \det \nabla \bfeta > 0 \text{ a.e. in } \Omega_S, \\ \infty, & \text{ otherwise} ,\end{cases}
\end{align*}
where $\mathcal{C}$ is a positive definite quadratic tensor and $a > \frac{3q}{q-3}$. For a much more detailed discussion, we refer to Sections 1.4 and 2.3 of \cite{benevsova2020variational}.

\subsubsection{Fluid equations}
The fluid flow is governed by the compressible Navier-Stokes equations:\footnote{One can also introduce a forcing term of the form $\rho \bb{F}$ on the right-hand side of the second equation ($\bb{F}$ can be gravity for example), which was done in \cite{breit2021compressible}. This however would not affect the results presented in this paper, so it is omitted for simplicity.}
\begin{eqnarray}
    \partial_t \rho + \nabla \cdot (\rho \mathbf{u}) &=& 0\quad{\rm in}\quad   Q_{F,T}^{\bfeta },\label{conteq}\\[2mm]
\partial_t (\rho\mathbf{u}) + \nabla \cdot (\rho\mathbf{u}\otimes \mathbf{u}) &=& \nabla \cdot\mathbb{T}(\vu,p)\quad{\rm in}\quad   Q_{F,T}^{\bfeta }, \label{momeq} 
\\[2mm]
\vu &=&0\quad{\rm on}\; (0,T)\times(\partial\Omega\setminus \partial\Omega_S^\bfeta(t)),
\end{eqnarray}
where $\mathbb{T}(\vu,p)=\mathbb{S}(\vu)-p\mathbb{I}$ is the fluid Cauchy stress tensor. Here $\mathbb{S}$ is the viscous stress tensor whose constitutive relation is given by standard Newton rheological law
\begin{eqnarray*}
\mathbb{S}(\nabla \bb{u}):=\mu \big( \nabla \bb{u} + \nabla^\tau \bb{u}-\frac{2}{3} (\nabla \cdot \bb{u}) \mathbb{I}\big) + \zeta  (\nabla \cdot \bb{u}) \mathbb{I},\quad \mu,\zeta>0,
\end{eqnarray*}
where $\mu$ is the shear viscosity coefficient and $\zeta$ the bulk viscosity coefficient. We assume following state equation for the pressure
\begin{align}\label{PressureLaw}
    p=p(\rho)=\rho^{\gamma},
\end{align}
where $\gamma>1$ is the adiabatic exponent.

\subsubsection{Coupling conditions}
Since we consider a coupled problem, we need to prescribe two types of coupling conditions for two types of interactions. They read as follows:

\bigskip

\noindent \textbf{Fluid-structure coupling} on the Lagrangian fluid-structure interface $Q_\bfeta^I$:
\begin{align}
    &\text{Kinematic\;coupling\;condition:}& ~ \partial_t \bfeta &= \mathbf{u} \circ \bfeta, \label{kinc}\\
    &\text{Dynamic\;coupling\;condition:}&~ \boldsymbol\sigma(\bfeta,\partial_t\bfeta)\bb{n}&=  \det \nabla\bfeta\left [\mathbb{T}(\vu,p)\circ\bfeta\right ]\nabla\bfeta^{-\tau}\bb{n}, \label{dync}
\end{align}
where $Q_\bfeta^I\subseteq (0,T)\times \partial\Omega_S$ is defined in Section \ref{weak:sol:sec} and $\bb{n}$ is the unit outer normal vector on $\partial\Omega_S$; \\ \\
\noindent \textbf{Structure-structure coupling} on the set of self-contact
\begin{align}
    &\text{Kinematic\;coupling\;condition:}& ~ \partial_t \bfeta(t,x) &= \partial_t \bfeta(t,y), \quad \label{kinca}\\
    &\text{Dynamic\;coupling\;condition:}&~ \boldsymbol\sigma(\bfeta,\partial_t\bfeta)(t,x)\bb{n}(x)dS(x) &= - \boldsymbol\sigma(\bfeta,\partial_t\bfeta)(t,y)\bb{n}(y)dS(y) , \quad \label{dynca}
\end{align}
for all $t\in (0,T)$ and $x\neq y\in \partial\Omega_S$ such that  $\bfeta(t,x)=\bfeta(t,y)$, where $dS:\partial\Omega\to \mathbb{R}^+$ is the surface element. \\

Notice that since we prescribe coupling conditions on the reference domain, we needed to apply the Piola transform to the Cauchy stress tensor of the fluid to get the first Piola-Kirchhoff stress tensor in \eqref{dync}. Next, we prescribe the inital data:
\begin{eqnarray}\label{initialdata}
{\bfeta }(0,\cdot)={\bfeta }_0,\quad \partial_t {\bfeta }(0,\cdot)=\bb{v}_0,\quad ~\rho(0,\cdot) = \rho_0, \quad (\rho\mathbf{u})(0, \cdot)=(\rho\bb{u})_0,
\end{eqnarray}
and assume that it satifies the following compatibility conditions:
\begin{eqnarray}\nonumber
    &&\rho_0\in L^{\gamma}(\Omega^{\bfeta_0}), \quad \rho_0 \geq 0, \quad \rho_0 \not\equiv 0, \quad \rho_{0}|_{\mathbb{R}^3 \setminus \Omega^{\bfeta_0}} =0,\\ \label{IC_Compatibility}
&&\rho_0>0 \text{ in } \{x\in \Omega_F^{\bfeta_0}: (\rho\bb{u})_0(x) >0\}, \quad  \frac{(\rho \bb{u})_0^2}{\rho_0} \in L^1(\Omega_F^{\bfeta_0}).
\end{eqnarray}

Finally, let us point out that a smooth solution of the problem \eqref{elasticity}-\eqref{initialdata} satisfies the following energy identity in $(0,T]$:
\begin{align}
&\frac{d}{dt} \int_{\Omega_F^{\bfeta }} \Big( \frac{1}{2} \rho |\bb{u}|^2 + \frac{\rho^\gamma}{\gamma-1}   \Big)  + \int_{\Omega_F^{\bfeta }} \mathbb{S}(\nabla \mathbf{u}):\nabla \mathbf{u} \nonumber\\
&+ \frac{d}{dt} \Big(\int_{{\Omega_S}} \frac{1}{2}|\partial_t {\bfeta }|^2 + E(\nabla\bfeta )  \Big)+ 2R(\bfeta,\partial_t \bfeta) =0. \label{enid}
\end{align}
This identity is formally obtained at the end of Section \ref{derivation}. The goal of this paper is to study global weak solutions with a pressure defect measure to problem \eqref{elasticity}-\eqref{initialdata} which satisfy the corresponding energy inequality.

\subsection{Main results and literature review}

The main result of the paper is the following existence theorem:
\begin{thm}\label{main:thm}
Let $\Omega$, $\Omega_S$ be open bounded sets of class $C^{1,\alpha}$, $\alpha>0$, $T>0$ and let structure energy and dissipation functionals $E$ and $R$ satisfy conditions (E1)-(E4) and (R1)-(R2) respectively. Moreover, let $\bfeta_0\in \mathcal{E}^q$ for $q>3$, ${\bf v}_0\in L^2(\Omega_S)$, $\rho_0\in L^\gamma(\Omega_F^{\bfeta_0})$ for $\gamma>12/7$ and $(\rho \bb{u})_0 \in L^{\frac{2\gamma}{\gamma+1}}(\Omega_F^{\bfeta_0})$ are such that compatibility conditions $\eqref{IC_Compatibility}$ are satisfied and $\Omega^\bfeta_S(0)\neq \Omega$. Then there exists a global weak solution with defect measure to the problem \eqref{elasticity}-\eqref{initialdata} on $(0,T)$ in the sense of Definition \ref{weaksol}.
\end{thm}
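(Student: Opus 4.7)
The plan is to construct a solution by a multi-layered approximation scheme that decouples the fluid from the solid, and then to pass sequentially to the limit in several regularization parameters. Three small parameters drive the approximation: a velocity penalization $\varepsilon$ that allows the fluid to flow through the (ghost) solid region and only weakly enforces the kinematic condition $\partial_t\bfeta = \vu\circ\bfeta$; a contact-penalty $\delta$ added to the solid energy that approximately enforces the Ciarlet--Ne\v cas non-interpenetration condition together with the rigid-wall boundary condition \eqref{elasticityBC}; and a Feireisl-type artificial pressure and viscosity $\kappa(\rho^\beta+\Delta\rho)$ on the fluid side, needed to obtain the higher density integrability required by the compressible Navier--Stokes machinery. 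At the fully penalized level, a Rothe-type time discretization reduces each time slab to two decoupled subproblems: a variational solid subproblem solvable by direct minimization using (E1)-(E2) and (R1)-(R2), and a compressible Navier--Stokes subproblem on the fixed container $\Omega$ with a drag term of size $\tfrac{1}{\varepsilon}$ supported in the current solid region. Existence at each level follows from a fixed-point argument per time step followed by a piecewise-in-time interpolation.

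The second ingredient is a set of uniform a priori bounds derived from an approximate version of the energy identity \eqref{enid}. Independently of all regularization parameters, these give $\rho \in L^\infty(0,T;L^\gamma)$, $\sqrt{\rho}\vu \in L^\infty(0,T;L^2)$, $\vu \in L^2(0,T;H^1(\Omega))$, $\bfeta \in L^\infty(0,T;\mathcal{E}^q)$, and $\partial_t\bfeta \in L^\infty(0,T;L^2(\Omega_S)) \cap L^2(0,T;W^{1,2}(\Omega_S))$, the last of these being obtained from the Korn-type hypothesis (R2) combined with the dissipation bound; the penalty contribution is also bounded and forces $\vu\circ\bfeta = \partial_t\bfeta$ in the limit $\varepsilon\to 0$.

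Passage to the limit is performed in the order $\kappa\to 0$, then $\varepsilon\to 0$, then $\delta\to 0$. On the solid side, Aubin--Lions on the reference configuration gives strong convergence $\bfeta_n\to\bfeta$ in $C(0,T;W^{1,q}(\Omega_S))$, and (E3)-(E4) together with the $2$-homogeneity and weak lower semicontinuity in (R1) identify the weak limits of the nonlinear stress terms $DE(\bfeta_n)$ and $D_2 R(\bfeta_n,\partial_t\bfeta_n)$. On the fluid side, I would adapt the Lions--Feireisl strategy to the moving domain: effective viscous flux identity, renormalized continuity equation, and strong density convergence via an oscillations defect measure controlled by the hypothesis $\gamma>12/7$. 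The $\varepsilon\to 0$ limit transfers the fluid problem from $\Omega$ back to $Q_{F,T}^\bfeta$ and, through the Lagrange-multiplier interpretation of the drag term, produces the dynamic coupling \eqref{dync}; the $\delta\to 0$ limit concentrates the contact-penalty reactions into the structure-structure dynamic condition \eqref{dynca} and restores the injectivity/orientation constraints defining $\mathcal{E}^q$.

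The main obstacle is the \emph{pressure defect measure}. In the classical Lions--Feireisl theory one gains higher pressure integrability by testing the momentum equation against a Bogovskii-corrected multiplier of $\rho^\theta$, but the operator norm of Bogovskii on $\Omega_F^\bfeta(t)$ degrades uncontrollably where the solid self-contacts or touches the rigid container, because the fluid domain loses any uniform John-domain property. Consequently the limit pressure lives only in $\mathcal{M}(\overline{Q_T})$ and may concentrate on the generally irregular, lower-dimensional contact set. The technical heart of the proof is therefore a two-scale analysis: away from the contact set a localized Bogovskii estimate gives the usual $L^{p}$ integrability of the pressure and standard arguments close the limit; near the contact set a concentration-compactness argument identifies the defect as a nonnegative measure, and a careful construction of admissible test functions respecting both coupling conditions shows that the defect contributes with the correct sign in the momentum equation and in the energy inequality. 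This is precisely what forces the notion of solution with defect measure used in the theorem statement.
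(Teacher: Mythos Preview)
Your overall architecture---decouple fluid and solid, penalize the kinematic coupling, add a soft contact potential, use Feireisl-type artificial pressure/viscosity, and pass to the limit in stages---matches the paper's strategy in spirit. However, several of your specific mechanisms and limit orderings differ from the paper in ways that matter, and two of your claims about what the limits produce are not right.

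\textbf{Order of limits and decoupling mechanism.} The paper bundles the artificial pressure $\varepsilon\rho^\beta$, the higher-order structure regularization $\varepsilon^{a_0}\|\bfeta\|_{W^{k_0,2}}^2$ and $\varepsilon\|\partial_t\bfeta\|_{W^{k_0,2}}^2$, and the contact penalty $K_\varepsilon$ into a \emph{single} parameter $\varepsilon$, removed last. The kinematic coupling is not enforced by a drag term but by a time-stepping splitting of the solid acceleration: the solid equation carries $\tfrac{1}{h}(\partial_t\bfeta-\tau_h\mathbf U)$ and the fluid equation carries $\tfrac{1}{h}(\mathbf U-\partial_t\bfeta)$, so that upon summing with a common test function these combine into a discrete second time derivative. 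The first limit is $h,\varsigma\to 0$ (the density-dissipation $\varsigma\Delta\rho$ goes with $h$), which simultaneously restores the kinematic coupling and yields $\rho=0$ on the solid region. Your ordering ($\kappa\to 0$ first, then $\varepsilon$, then $\delta$) removes the artificial pressure before the fluid has been pushed out of the solid; in the paper the $\rho^{\beta+1}$ bound on the solid domain (coming from an extra $\rho^2$-damping in the continuity equation) is precisely what lets one identify the nonlinear pressure there during the $h$-limit. You would need a substitute for this.

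\textbf{How the dynamic couplings arise.} Your statement that the $\varepsilon\to 0$ drag limit ``through the Lagrange-multiplier interpretation\ldots produces the dynamic coupling \eqref{dync}'' and that the contact penalty ``concentrates'' into \eqref{dynca} does not match the construction. In the paper neither coupling is recovered as a separate multiplier; both are encoded from the outset in the weak momentum equation through the constraint $\boldsymbol\phi=\boldsymbol\xi\circ\bfeta$ on test functions. The crucial point for the contact penalty is that this constraint forces $\boldsymbol\phi(t,x)=\boldsymbol\phi(t,y)$ whenever $\bfeta(t,x)=\bfeta(t,y)$, and together with the growth condition $|\kappa_\varepsilon'(r)r|\le c\varepsilon(\kappa_\varepsilon(r)+1)$ this makes $\langle DK_\varepsilon(\bfeta_\varepsilon),\boldsymbol\phi\rangle\to 0$: the contact force \emph{vanishes} against admissible test functions rather than concentrating. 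If you attempt a Lagrange-multiplier reading you will not be able to close the argument, because the contact set can have positive $(d-1)$-dimensional measure and no sign or regularity for the multiplier is available.

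\textbf{Defect measure.} Your localization picture is correct: away from $\bfeta(\partial I_\bfeta)$ a local Bogovski\u\i-type test function gives equi-integrability of the pressure, while near the self-contact set only a measure bound survives. The paper implements the boundary estimate not via Bogovski\u\i\ but via a Kuku\v cka-style test function built from powers of the distance to the graph representing $\partial\Omega_F^{\bfeta}$; this works patch-by-patch because locally (away from $\partial I_\bfeta$) the fluid boundary is a single $C^{1,\alpha}$ graph. The defect measure is then shown to be supported on $(0,T]\times\bfeta(\partial N_\bfeta(t))$, not merely on the full contact set.
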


One of the main difficulties of the considered problem is that the fluid-solid interface can be quite irregular. Namely, while the space $\mathcal{E}^q$ is embedded in $C^{1,\alpha}(\Omega_S)$ and the surface deformation $\bfeta|_{\partial\Omega_S}$ is a $C^{1,\alpha}$ local diffeomorphism, it is in general not injective. Therefore one cannot avoid irregularities in the fluid domain such as cusps or even surfaces that are not representable by a union of overlapping graphs, which gives rise to defect measures that correspond to concentrations of the fluid pressure. Please see Section \ref{rem:cantor} for more details about possible irregularities of the fluid domain. In this sense, our result is optimal and one cannot expect more regular solutions without extra assumptions. However, we can prove that the interface corresponding to a weak solution has certain nice topological properties, see Theorem \ref{lemma:contact2}. In particular we prove the following result, obtained there in Claim 4:
\begin{thm}
Let $(\bfeta,\rho,\vu)$ be a weak solution to problem \eqref{elasticity}-\eqref{initialdata} in the sense of Definition \ref{weaksol}. Then there exits a constant $A_0>0$, depending only on the initial energy and the domains $\Omega,\Omega_S$, such that area of the fluid-solid interface is bounded below by $A_0$.
\end{thm}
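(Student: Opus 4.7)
The plan is to reduce the area lower bound to a relative isoperimetric inequality inside the container $\Omega$, using energy estimates to obtain uniform lower bounds on the volumes of both the fluid and the solid phases, which then control the relative perimeter from below.

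First I would establish a positive lower bound on the fluid volume $|\Omega_F^\bfeta(t)|$. Integrating the continuity equation \eqref{conteq} and using the kinematic coupling \eqref{kinc} gives conservation of fluid mass,
\begin{equation*}
\int_{\Omega_F^\bfeta(t)}\rho\,dx=\int_{\Omega_F^{\bfeta_0}}\rho_0\,dx=:m_0>0,
\end{equation*}
while the energy inequality \eqref{enid} combined with the state law \eqref{PressureLaw} yields $\|\rho(t)\|_{L^\gamma(\Omega_F^\bfeta(t))}\leq C$ uniformly in $t$, with $C$ depending only on the initial energy. Hölder's inequality then gives $m_0\leq \|\rho(t)\|_{L^\gamma}|\Omega_F^\bfeta(t)|^{(\gamma-1)/\gamma}$, hence $|\Omega_F^\bfeta(t)|\geq c_1>0$. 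On the solid side, assumption (E2) together with the energy bound implies $\det\nabla\bfeta(t,x)\geq\varepsilon_0>0$ uniformly, so by the Ciarlet--Nečas identity in the definition of $\mathcal{E}^q$,
\begin{equation*}
|\Omega_S^\bfeta(t)|=\int_{\Omega_S}\det\nabla\bfeta(t,\cdot)\,dx\geq\varepsilon_0|\Omega_S|=:c_2>0.
\end{equation*}

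The core step is to apply the relative isoperimetric inequality in the bounded $C^{1,\alpha}$ domain $\Omega$: there exists $C_\Omega>0$ such that for any set $E\subset\Omega$ of finite perimeter,
\begin{equation*}
\mathcal{H}^2(\partial^* E\cap\Omega)\geq C_\Omega\,\min\bigl(|E|,|\Omega\setminus E|\bigr)^{2/3}.
\end{equation*}
Since $\bfeta(t,\cdot)\in\mathcal{E}^q\hookrightarrow C^{1,\alpha}(\overline{\Omega_S})$, the image $\bfeta(t,\partial\Omega_S)$ has finite $\mathcal{H}^2$-measure (controlled by $\|\mathrm{cof}\,\nabla\bfeta\|_{L^\infty}$, and hence by the initial energy), so $\Omega_F^\bfeta(t)$ is of finite perimeter in $\Omega$. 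By the convention $\Omega_S^\bfeta(t):=\mathrm{int}(\bfeta(t,\overline{\Omega_S}))$ the self-contact points are assigned to the interior of the solid and therefore do not appear in the reduced boundary; what remains of $\partial^*\Omega_F^\bfeta(t)\cap\Omega$ is precisely the fluid-solid interface. Applying the relative isoperimetric inequality with $E=\Omega_F^\bfeta(t)$ and combining with the two volume bounds yields the desired constant $A_0:=C_\Omega\min(c_1,c_2)^{2/3}>0$, depending only on the initial energy and the reference domains $\Omega,\Omega_S$. If the statement is to be read in Lagrangian coordinates, the uniform $L^\infty$ bound on $\nabla\bfeta$ converts this estimate into the corresponding lower bound for $\mathcal{H}^2(\{t\}\times\partial\Omega_S\cap Q_\bfeta^I)$ after dividing by $\|\mathrm{cof}\,\nabla\bfeta\|_{L^\infty}$.

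The main technical obstacle is the clean identification of $\partial^*\Omega_F^\bfeta(t)\cap\Omega$ with the fluid-solid interface as defined in Section~\ref{weak:sol:sec}, given that $\bfeta|_{\partial\Omega_S}$ is only a $C^{1,\alpha}$ local diffeomorphism and may be globally non-injective. One must argue, using the Ciarlet--Nečas condition and the doubled-surface convention, that pairs of self-touching boundary pieces carry opposite measure-theoretic normals and so are removed from the reduced boundary (or at worst lie on an $\mathcal{H}^2$-negligible set), so that the isoperimetric lower bound indeed translates into a lower bound on the genuine fluid-solid interface rather than on some larger set including self-contact surfaces.
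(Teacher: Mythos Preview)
Your approach via the relative isoperimetric inequality is correct and genuinely different from the paper's. The paper works in Lagrangian coordinates and argues by compactness: it shows $|\bfeta(I_\bfeta)|\geq C_0|I_\bfeta|$ from the uniform lower bound on the Jacobian, proves (Lemma~\ref{lemma:contact1}, Claim~6) that $\bfeta\mapsto|I_\bfeta|$ is lower semicontinuous on bounded sets of $\mathcal{E}^q$, and then concludes by contradiction: if $|I_{\bfeta_n}|\to 0$ along a sequence in the energy sublevel set, the limit $\bfeta$ would have $|I_\bfeta|=0$, impossible since $I_\bfeta$ is open and nonempty whenever $|\Omega_S^\bfeta|<|\Omega|$. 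This establishes the existence of $A_0$ without an explicit formula, whereas your route produces the quantitative constant $A_0=C_\Omega\min(c_1,c_2)^{2/3}$ directly.

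Your acknowledged obstacle is smaller than you fear. You do not need the equality $\partial^*\Omega_F^\bfeta(t)\cap\Omega=\bfeta(\overline{I_\bfeta})$; the inclusion $\subset$ suffices, since then $P(\Omega_F^\bfeta;\Omega)\leq\mathcal{H}^2(\bfeta(\overline{I_\bfeta}))$ and the isoperimetric lower bound on $P$ transfers to the interface. That inclusion follows from Lemma~\ref{lemma:contact1}, Claim~4, which gives $\partial\Omega_F^\bfeta\cap\Omega\subset\bfeta(\overline{I_\bfeta})$ (the remainder of $\partial\Omega_F^\bfeta$ lies on $\partial\Omega$), combined with the general fact $\partial^*E\subset\partial E$. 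One place that does need tightening: finite $\mathcal{H}^2$-measure of the topological boundary does not by itself imply finite perimeter, but here a direct calculation via Piola's identity---test $\chi_{\bfeta(\Omega_S)}$ against $\nabla\cdot\boldsymbol\varphi$ and change variables back to $\Omega_S$---gives $P(\bfeta(\Omega_S);\mathbb{R}^3)\leq\|\mathrm{cof}\,\nabla\bfeta\|_{L^\infty}|\partial\Omega_S|$.
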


The main novelty of these results is that we consider \textit{global} solutions to a $3D$/$3D$ fluid-structure interaction (FSI) problem that allow possible self-contact of the solid and solid-rigid boundary contact. To the best of our knowledge, this is the first result where such global solutions are constructed and where contact conditions are formulated in the context of fluid-structure interaction. 

The problem of describing contact is one of the most challenging problems in the analysis of the FSI problems and majority of results in the area do not allow contact, i.e.\ analyse the problem only before the contact occurs. So far there are only a few results that consider global-in-time solution in the context of FSI. In the simplest case when the structure is rigid, the existence of a global-in-time weak solution was proved for compressible fluid and no-slip kinematic coupling conditions by Feireisl \cite{Feireisl03} and for the incompressible fluid with slip kinematic coupling conditions by Chemetov and Ne\v casova \cite{ChemetovNecasova}. An analogous global existence result for a FSI system consisting of $2D$ incompressible viscous fluid and a $1D$ elastic beam was proved in \cite{CasanovaGrandmontHillairet}. In the context of strong solutions, there are several small data global-time existence results. In \cite{Leq13} such a result was proved for the incompressible Navier-Stokes equations coupled with a damped wave equation, while in \cite{IKLT14,IKLT17} analogous results were proved for an FSI problem between $3D$ fluid and $3D$ linearly elastic solid, but with some additional interface or structure damping terms.

Another approach to proving global-in-time existence results for FSI problems is to show that in certain situation the contact will not occur in finite time -- the so-called ``no contact paradox''. This phenomenon is by now well understood in the case where solid is rigid, see e.g.\ \cite{HT09,GVH10}. Recently, significant progress was made by Grandmont and Hillairet  \cite{GraHill16} who proved the existence of a global-in-time strong solution to a 2D FSI problem involving a viscoelastic beam, where they also showed that the contact does not occur in finite time. We emphasize that this result essentially uses the global strong solution with large initial data and therefore cannot be directly applied to $3D$ case. 

In contrast to this, in solid mechanics without intervening fluids, contact has been long established as something that will occur and many methods to deal with it have been established, from classic results, e.g.\ \cite{MR118021,ballGlobalInvertibilitySobolev1981,ciarletInjectivitySelfcontactNonlinear1987}, going all the way to most recent studies of hard dynamic self contact such as \cite{kromerQuasistaticViscoelasticitySelfcontact2019,cesikInertialEvolutionNonlinear2022}. As we have the fluid acting as a kind of physical buffer to the contact, our approximation relies on penalization by a soft potential, something that has been well studied in particular in the context of numerics (see e.g.\ \cite{kromerGlobalInjectivitySecondgradient2019}).

The second significant novelty of our result is that we consider a nonlinear $3D$ viscoelastic solid. The vast majority of papers that analyse FSI problems involving $3D$ elastic solids work with local-in-time regular solutions, see e.g.\ \cite{CS06,KT12,RV14,BG17}. For the analysis of the static case we refer reader to \cite{Grandmont02,GaldiKyed}. The standard elastic energy (e.g.\ Saint Venant–Kirchhoff) does not provide enough regularity for even defining weak solution of Leray-Hopf type. Therefore in order to work with weak solution one needs to use an elasticity model with better regularity properties. To the best of our knowledge there are two approaches in the literature. The first one is to consider an elastic interface with mass, e.g.\ \cite{multilayered,SunMarBor}. In this paper we adapt the second approach, i.e.\ we use a second order nonlinear viscoelastic model for the solid. For such a model the existence a weak solution to  FSI problem with incompressible and compressible fluid has been obtained in \cite{benevsova2020variational,breit2021compressible}. We note that viscoelastic regularization is needed in our approach in order to deal with the nonlinearities that arise when studying large deformations and moving boundaries. However, viscoelastic terms are not merely a mathematical regularization, but rather they are derived from the physical characteristics of viscoelastic solids such as biological materials and polymers. 

We finish this brief literature review by mentioning that well-posedness theory for weak solutions to FSI problems with simpler models for solid is by now well-developed: existence theory when the solid is rigid e.g.\ \cite{Gunzburger00,Tucsnak02}, uniqueness theory when the solid is rigid e.g.\ \cite{Glass15,Bravin19,Radosevic21}, existence theory when the solid is described by a lower dimensional model (plate/shell/beam etc.) e.g.\ \cite{MR2438783,SubBorARMA,compressible,BorSeb,srdjan1}, uniqueness theory when the solid is described by a lowerdimensional model e.g. \cite{SS22,SrdjanWS}. Finally, we refer the interested reader to a nice review of the current state-of-the-art in FSI \cite{SunBulletin}, and \cite{FSIBioMed} for a discussion of applications of FSI in biomedicine.

\subsection{The strategy of the proof and discussion of some resulting implications}
The weak solution is obtained as a limit of approximate solutions which are constructed as follows. The problem is decoupled by allowing the fluid to pass through the elastic solid and extending the fluid equations to the entire domain $\Omega$, while the fluid and solid velocities are penalized by each other on the solid domain. Moreover, the elastic solid is supplemented with a contact-penalization term which prevents contact with the rigid boundary $\partial\Omega$ and self-contact. The initial fluid density is zero on the solid domain. Finally, some regularizations are added to the problem to ensure that the scheme and the convergences work properly. Let us point out the main advantages in this construction. First, the fluid and structure sub-problems are solved separately, sending data to one another through a time-stepping scheme. The fluid equations are solved by using a Galerkin-based approach on a fixed domain rather than on a moving domain, while the structure equations are in contrast solved by time-discretizing and using the variational techniques. This is possible only because of the decoupling and it cannot be overstated how much it simplifies the analysis. Moreover, the flexibility of this construction paves the road for future extensions, where more complex fluid or structure models can be used. 

The second part of the proof are the convergences. Here, the kinematic coupling is recovered due to existence of $H_x^1$ global velocity field (which is a consequence of viscosity in both fluid and solid) and the fluid cannot pass through the elastic solid anymore, so the initial zero density of the fluid in the solid domain is proved to stay this way at all times. As the contact-penalization vanishes, the structure can have contact with the rigid boundary and with itself, while overlap is still prevented, thus giving us a global solution. The proof of this part is mostly standard, as the methods used for compressible viscous fluids and second-gradient elastic solids can be used here as well. The key difference is the convergence of pressure. The appearance of contact leads to the formation of an irregular boundary in the fluid domain. At the point of contact, the fluid domain is bounded by two graphs that are tangent to each other. As a result, the boundary of the fluid domain will have cusps, and it is no longer Lipschitz, which means that it is not differentiable at some points. This irregularity can result in concentrations of pressure, which correspond to the defect measure in the definition of our weak solution.
Note that such behaviour is to be expected because fluids are in general strongly affected by the shape of the domain and the theory for compressible viscous fluids developed so far gives us no information about the pressure near such irregular boundaries. The only result that touches this issue was obtained by Kuku\v{c}ka in \cite{kukucka}, where pressure at cusps of $W^{1,s}$-regularity for $s\geq 2$ can be controlled provided $\gamma \geq \frac{3s}{2s-3}$. Note that this result cannot be used in our problem due to the reasons presented above. To conclude, the defect measure in the weak solution definition allows us to capture concentrations of pressure that appear because of the  irregularities of the fluid domain due to the contact. Therefore, it seems that the weak solution with defect measure is the optimal framework to study our problem.


At the end, let us point out that one might consider the incompressible fluid model instead, in order to avoid having to deal with defect measures directly. However incompressible fluids do not allow us to study the properties of the pressure nearly as well as the compressible case does, as it is only recovered as a Lagrange multiplier instead of a quantity given by constitution law \eqref{PressureLaw}. Proving existence of solutions would require the test functions $\boldsymbol\xi$ to be divergence-free on the fluid domain and thus even further dependent on the solution. This was done without contact in \cite[Lemma A.6]{benevsova2020variational} by constructing approximate test functions which are additionally divergence-free inside a $\varepsilon$-neighbourhood of the interface and could likely be extended to contact as well. However the resulting pressure will still be a distribution and thus not an improvement on the solutions obtained for the compressible case.

\bigskip

The paper is organized as follows. In Section $\ref{weak:sol:sec}$, the concept of weak solution is introduced. In Section $\ref{sec:Top}$, some topological properties of fluid and structure domains for weak solutions are studied. Section $\ref{form:eq}$, it is proved that smooth solution are also weak solutions, and that weak solutions which are smooth satisfy the original problem pointwise. In Section $\ref{sec:app:constr}$, the approximate solutions are constructed via decoupling time-stepping penalization approximation scheme. In Section $\ref{sec:h:lim}$, a first limit passage is performed in order to obtain a solution to a fully coupled problem without contact. Finally, in Section $\ref{sec:last:limit}$, the weak solution to the original problem is obtained by removing the contact-penalization, thus allowing the contact to happen.

\section{Weak solution}\label{weak:sol:sec}
First, note that the elements of the set $\mathcal{E}^q$ of possible deformations can still have self-contact or contact with the rigid boundary. Therefore, it is useful to characterize the physical boundary for such deformations:
\begin{eqnarray*}
    &&C_\bfeta:=\{x\in \partial\Omega_S: \bfeta(x)\in \partial\Omega \},\\
    &&I_\bfeta:=\{x\in \partial\Omega_S\setminus C_\bfeta:\bfeta \text{ is injective at }x  \},\\
    &&N_\bfeta:=\{x\in \partial\Omega_S\setminus C_\bfeta: \bfeta \text{ is not injective at }x  \}.
\end{eqnarray*}
Here injective at $x$ means: $(\forall y\in \overline{\Omega_S}\setminus\{x\})\;\bfeta(x)\neq\bfeta(y)$. Notice that since $\bfeta\in\mathcal{E}^q$ and therefore satisfies the Ciarlet-Ne\v cas condition, injectivity at $x$ is equivalent to: $(\forall y\in \partial\Omega_S\setminus\{x\})\;\bfeta(x)\neq\bfeta(y)$.
By definition we have $\partial\Omega_S=C_\bfeta\cup I_\bfeta\cup N_\bfeta$ and $C_\bfeta\cap I_\bfeta=C_\bfeta\cap N_\bfeta=I_\bfeta\cap N_\bfeta=\emptyset$. 
\begin{figure}[h!]
    \centering
    \includegraphics[width=\textwidth]{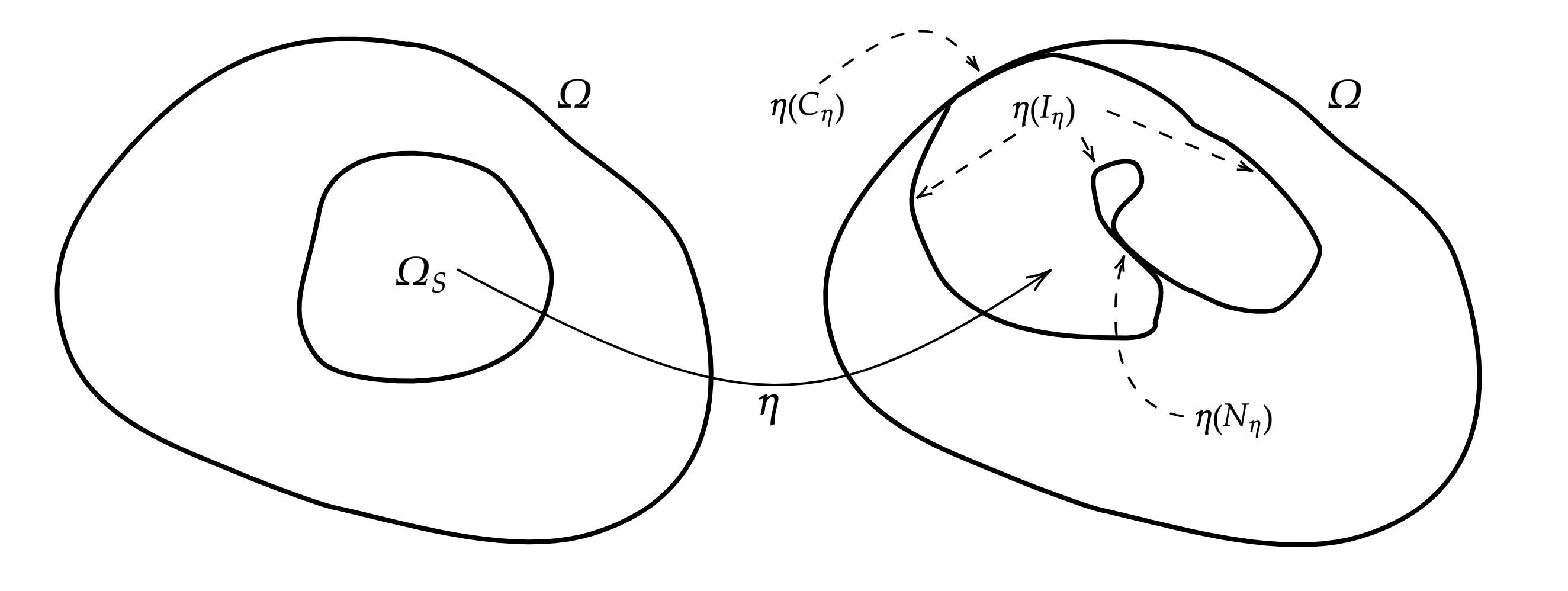}
    \caption{Sets $\bfeta (C_\bfeta), \bfeta(I_\bfeta)$ and $\bfeta(N_\bfeta)$ for a given deformation $\bfeta\in \mathcal{E}^q$.}
    \label{fig:my_label}
\end{figure}
The boundaries of the sets $C_\bfeta,I_\bfeta, N_\bfeta$ in the topology of $\partial \Omega_S$ will be denoted with the standard notation
\begin{eqnarray*}
    \partial C_\bfeta,\quad \partial I_\bfeta, \quad \partial N_\bfeta.
\end{eqnarray*}
Next, let us denote the time-dependent sets as
\begin{eqnarray*}
    &&C_\bfeta(t):=C_{\bfeta(t)},\qquad I_\bfeta(t):=I_{\bfeta(t)}, \qquad N_\bfeta(t):=N_{\bfeta(t)}
\end{eqnarray*}
and the time-space cylinders 
\begin{eqnarray*}
     Q_\bfeta^C:=[0,T]\times C_\bfeta(t), \qquad  Q_\bfeta^I:=[0,T]\times I_\bfeta(t), \qquad  Q_\bfeta^N:=[0,T]\times N_\bfeta(t).
\end{eqnarray*}
We will study topological properties of these sets in Section \ref{sec:Top}. \\

To define weak solutions we need to introduce appropriate functional framework.
For a given $\bfeta$, let us introduce the functional spaces defined on time-dependent domains:
\begin{eqnarray*}
    L^p(0,T; L^q(\Omega_F^\bfeta(t))):=\left\{f \in L^1(Q_{F,T}^\bfeta):\left(\int_{\Omega_F^\bfeta(t)}|f(t,x)|^q dx\right)^{1/q} \in L^p(0,T) \right\}
\end{eqnarray*}
and
\begin{eqnarray*}
     L^p(0,T; W^{1,q}(\Omega_F^\bfeta(t))):=\{f \in L^1(Q_{F,T}^\bfeta): f,\nabla f\in L^p(0,T; L^q(\Omega_F^\bfeta(t))) \}.
\end{eqnarray*}
Now, let us introduce the concept of weak solution we will study (see Section \ref{derivation} for its formal derivation):
\begin{mydef}\label{weaksol}
Let $\Omega^\bfeta_S(0)\neq \Omega$, $\bfeta_0 \in \mathcal{E}^q$ for $q>3$, ${\bf v}_0\in L^2(\Omega_S)$, $\rho_0\in L^\gamma(\Omega_F^{\bfeta_0})$ and $(\rho \bb{u})_0 \in L^{\frac{2\gamma}{\gamma+1}}(\Omega_F^{\bfeta_0})$ such that compatibility conditions $\eqref{IC_Compatibility}$ are satisfied. We say that $({\bfeta },\rho,\bb{u})$ is a weak solution to the problem \eqref{elasticity}-\eqref{initialdata} with defect measure $\pi\in L_{w^*}^\infty(0,T; \mathcal{M}^+(\partial N_\bfeta(t)))$ if:
\begin{enumerate}
\item $\bfeta \in L^\infty(0,T; \mathcal{E}^q)$, $\partial_t \bfeta \in L^2(0,T;H^1(\Omega_S))$,\\
$\bb{u} \in  L^2(0,T; H^1(\Omega_F^\bfeta(t)))$,\\
$\rho \in L^\infty(0,T;L^\gamma(\Omega_F^\bfeta(t)))\cap L^{\gamma+\theta}\big(0,T;L_{loc}^{\gamma+\theta}(\Omega_F^\bfeta(t))\big)$, for  $\theta=\frac23\gamma -1$,\\
$\rho\bb{u} \in L^\infty(0,T; L^{\frac{2\gamma}{\gamma+1}}(\Omega_F^\bfeta(t)))$, $\rho|\bb{u}|^2 \in L^\infty(0,T; L^{1}(\Omega_F^\bfeta(t)))$;
\item The kinematic coupling $\partial_t \bfeta = \mathbf{u} \circ \bfeta$ holds in the sense of traces on $Q_\bfeta^I$;
\item The structure velocities match at points of contact in the sense of traces, i.e.
\begin{eqnarray*}
    \partial_t \bfeta(t,x)=\partial_t \bfeta(t,y), \quad \text{for a.a. } t\in [0,T] \text{ and } x,y\in N_\bfeta(t) \text{ such that }\bfeta(t,x)=\bfeta(t,y);
\end{eqnarray*}
\item The renormalized continuity equation is satisfied in the weak sense
\begin{eqnarray}\label{reconteqweak}
    \int_{Q_{F,T}^\bfeta} \rho B(\rho)( \partial_t \varphi +\bb{u}\cdot \nabla \varphi) -\int_{Q_{F,T}^\bfeta} b(\rho)(\nabla\cdot \bb{u}) \varphi =\int_{\Omega_F^{\bfeta_0}}\rho_0 B(\rho_0)\varphi(0,\cdot)
\end{eqnarray}
for all $\varphi \in C_0^\infty([0,T)\times \mathbb{R}^3)$ and any $b\in L^\infty (0,\infty) \cap C[0,\infty)$ such that $b(0)=0$ with $B(\rho)=B(1)+\int_1^\rho \frac{b(z)}{z^2}dz$. 

\item The coupled momentum equation
\begin{eqnarray}
    &&\int_{Q_{F,T}^{\bfeta }} \rho \bb{u} \cdot\partial_t \boldsymbol\xi + \int_{Q_{F,T}^{\bfeta }}(\rho \bb{u} \otimes \bb{u}):\nabla\boldsymbol\xi +\int_{Q_{F,T}^{\bfeta }}p(\rho)(\nabla \cdot \boldsymbol\xi)+\int_0^T \langle \pi,\nabla \cdot \boldsymbol\xi \rangle_{[\mathcal{M}^+,C](\partial N_\bfeta(t))}   \nonumber\\
    &&\quad-  \int_{Q_{F,T}^{\bfeta }} \mathbb{S}(\nabla\bb{u}): \nabla \boldsymbol\xi +\int_{Q_{S,T}} \partial_t {\bfeta }\cdot \partial_t \boldsymbol\phi  -\int_0^T\langle DE(\bfeta),\boldsymbol\phi \rangle-\int_0^T\langle D_2R(\bfeta,\partial_t \bfeta), \boldsymbol\phi \rangle \nonumber\\
    &&=\int_{\Omega^{\bfeta_0}}(\rho\mathbf{u})_0\cdot\boldsymbol\xi(0,\cdot)+ \int_{\Omega_S} \bb{v}_0 \cdot \boldsymbol\phi(0,\cdot)\nonumber \\ \label{momeqweak}
\end{eqnarray}
holds for all $\boldsymbol\xi \in C_c^\infty([0,T)\times \Omega)$, where $ \boldsymbol\phi= \boldsymbol\xi\circ \bfeta$ on\footnote{Note that $\boldsymbol\phi$ inherits the regularity from $\bfeta$ so $\boldsymbol\phi \in L^2(0,T; W^{2,q}(\Omega_S))\cap H^1(0,T; H^1(\Omega_S))$ and therefore all the integral terms are well-defined. See also \cite[Prop. A.4]{benevsova2020variational}.} $Q_{S,T}$. 
\item Energy inequality 
\begin{eqnarray}
    &&\int_{\Omega_F^{\bfeta}(t)} \Big( \frac{1}{2} \rho |\bb{u}|^2 + \frac{\rho^\gamma}{\gamma-1}   \Big)(t) + \frac{1}{\gamma-1}\int_{\partial I_\bfeta(t)}d\pi(t) + \int_0^t\int_{\Omega_F^{\bfeta}(\tau)} (\mathbb{S}(\nabla \mathbf{u}):\nabla \mathbf{u})(\tau)~ d\tau \nonumber \\
    && \quad+  \frac{1}{2}\int_{\Omega_S} |\partial_t {\bfeta }|^2(t) + E(\bfeta(t)) + 2\int_0^t R(\bfeta,\partial_t \bfeta)(\tau)~d\tau \nonumber \\
    &&\leq \int_{\Omega_F^{\bfeta_0}} \Big( \frac{1}{2\rho_0} |(\rho\bb{u})_0|^2 + \frac{\rho_0^\gamma}{\gamma-1}   \Big) + \int_{\Omega_S} \frac{1}{2}|\bb{v}_0|^2 + E(\bfeta_0) \label{enineq}
    \end{eqnarray}
holds for all $t\in (0,T]$.
\end{enumerate}

\end{mydef}

\begin{rem} 
(1) If there is no contact, i.e.\ $Q_\bfeta^C\cup Q_\bfeta^N=\emptyset$, then the weak solution enjoys better properties and coincides with the one in \cite{breit2021compressible}.\\
(2) Note that the defect measure $\pi$ is supported on $(0,T)\times\mathcal{M}^+(\partial N_\bfeta(t))$. It is a consequence of lack of uniform integrability of pressure near the domain irregularities caused by self-contact. While contact can appear with the rigid boundary $\partial \Omega$ which also breaks uniform integrability, the test functions are compactly supported and are thus zero on $\partial\Omega$, so the possible measures supported on $\partial\Omega$ are irrelevant. The physical interpretation of this is that there is no boundary condition on $\partial\Omega$ for the fluid stress, while on the fluid-structure interface there is the fluid-structure dynamic coupling $\eqref{dync}$ which needs to be preserved in the test functions. Let us point out that if the solution is smooth, both this condition and the structure-structure dynamic coupling $\eqref{dynca}$ can be recovered in the pointwise sense, and the defect measure $\pi$ vanishes. This is shown in Section $\ref{derivation}$.\\
(3) Let us point out that there is a possibility of vacuum appearing in weak solutions to our problem. There are a few downsides to it. First, the viscosity coefficients are constant which implies that even in vacuum there can be fluid velocity which satisfies the following ``ghost elliptic equation'' $\nabla\cdot\mathbb{S}(\nabla\bb{u})=0$. In our framework, this means that the vacuum can produce stress which acts onto the structure. One way to avoid all these issues is to use a model which has a density dependent viscosity coefficients $\mu=\mu(\rho)$ and $\lambda=\lambda(\rho)$ which vanish inside vacuum (we refer to \cite[Chapter 30]{handbook} for detailed coverage of this topic). However, the analysis of this fluid model has not been done on bounded domains because no boundary conditions conserve both energy and so called ``BD entropy'' - a quantity which provides additional information that ensures the convergence of pressure and stress. Note that in this case, the strong convergence of density based on the convergence of viscous effective flux (for the constant viscosity case) is out of reach, so the improved regularity coming from the BD entropy compensates for this deficiency.
\end{rem}

\begin{rem} \label{rem:testCoupling}
 The coupling of test functions between an Eulerian and a Lagrangian test-function, as we have done here in the weak solution leads to some consequences for the expected class of solutions. Indeed, at any pair $(x,y)$ of pre-images of a contact point, i.e.\ with $\bfeta(x) =\bfeta(y)$, the corresponding values $\boldsymbol\phi(x),\boldsymbol\phi(y)$ of the Lagrangian test-function must coincide with that of the Eulerian test function $\boldsymbol\xi(\bfeta(x))$ and thus with each other. As a result, at that point, it seems like it is not possible to discern any information about the relative behaviour of the different parts, in particular the forces expressed by the momentum balance.
 
 Indeed this has been a well-known problem in the case of rigid bodies immersed in a fluid, where this uncertainty can lead to non-uniqueness of solutions \cite{starovoitovNonuniquenessSolutionProblem2005}. However in our case, we are dealing with an elastic body. For a rigid body, the same argument as above immediately fixes the test-function on the whole body. In contrast, for an elastic body we have the freedom to choose the test-function differently everywhere else, which gives us full information about all the stresses and forces within the solid. And while those could potentially form concentrations at the boundary, the interface itself has zero mass and thus carries no momentum. That means that knowing the solid momentum in the bulk is indeed sufficient to determine also the behaviour of the interface. 
 

 Additionally, while the velocities will coincide at the contact itself, this still allows for separation. While at first glance this seems counterintuitive, consider e.g.\ the 2 dimensional case where locally $\Omega$ and $\Omega_S$ consist of the upper half-space and $\eta(x)=(x_1,x_2+x_1^2+t^2)$. Then at time $t=0$, the fluid is in contact with the obstacle at point $(0,0)$ and their velocities align. Still there is separation for any future time and in particular it is possible to construct a matching velocity field $v\in L^2(0,T;W^{1,2}(\Omega_f^\bfeta))$ for the fluid with the right kinematic conditions. While this does not prove the actual existence of a corresponding solution to the equations, it at least shows that this is not prevented by our weak formulation.
\end{rem}

\section{Topological properties of fluid and structure domains and their interface for deformations with allowed contact}\label{sec:Top}
In order to study some topological and geometrical properties of the interface, first we prove a Lemma that gives us some nice properties of functions from $\mathcal{E}^q$.

\begin{lem}\label{lemma:contact1}
Let $\bfeta \in \mathcal{E}^q$. Then, one has:
\begin{enumerate}
    \item If $x,y \in C_\bfeta$ and $\bfeta(x)=\bfeta(y)$, then $x=y$, i.e. $\bfeta$ is injective on $C_\bfeta$;
    \item If $x,y,z\in N_\bfeta$ and $\bfeta(x)=\bfeta(y)=\bfeta(z)$, then at least one of $x=y$, $x=z$ and $y=z$ is true;
    \item $\partial\Omega_S^\bfeta=\bfeta(I_\bfeta)\cup \bfeta( C_\bfeta)\cup \bfeta( \partial N_\bfeta)$;
    \item $\partial \Omega_F^\bfeta=\bfeta(\overline{I_\bfeta})\cup \partial\Omega\setminus \bfeta(\operatorname{Int}(C_\bfeta))$;
    \item The fluid-structure interface equals $\bfeta(\overline{I_\bfeta})$, i.e. $\partial \Omega_S^\bfeta\cap \partial \Omega_F^\bfeta=\bfeta(\overline{I_\bfeta})$;
    \item Let $\bfeta_n\to \bfeta\; {\rm in}\; \mathcal{E}^q$. Then $|I_\bfeta|\leq \liminf\limits_{n\to \infty}|I_{\bfeta_n}|$.
\end{enumerate}
\end{lem}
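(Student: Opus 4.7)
The plan is to handle the six claims in three groups, using throughout that any $\bfeta\in\mathcal{E}^q$ with $q>3$ embeds into $C^{1,\alpha}(\overline{\Omega_S})$ via Sobolev embedding and has strictly positive $\det\nabla\bfeta$, so $\bfeta$ is a local $C^1$-diffeomorphism everywhere in $\overline{\Omega_S}$. In particular, for each $x\in\partial\Omega_S$ there is a radius $r>0$ such that $\bfeta(\overline{\Omega_S}\cap B_r(x))$ is a topological half-ball at $\bfeta(x)$, lying on one side of the image tangent plane $\nabla\bfeta(x)(T_x\partial\Omega_S)$, with the interior of $\Omega_S\cap B_r(x)$ sent homeomorphically onto the interior of this half-ball.

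\textbf{Claims 1 and 2.} The key geometric observation is that two topological half-balls at the same point of $\mathbb{R}^3$ whose tangent planes differ must intersect in an open set. Combined with the Ciarlet-Ne\v{c}as condition (injectivity on the interior of $\Omega_S$), this forces any two preimages $x\neq y$ with $\bfeta(x)=\bfeta(y)$ to have \emph{identical} image tangent planes and to give half-balls lying on \emph{opposite} sides of that plane. For Claim 1, if additionally $x,y\in C_\bfeta$, both half-balls must lie in $\overline{\Omega}$ thanks to $\overline{\Omega_S^\bfeta}\subseteq\overline{\Omega}$; but two half-balls on opposite sides of a common plane cannot simultaneously be contained in $\overline{\Omega}$, contradiction. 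Claim 2 follows because at most two pairwise-disjoint half-spaces in $\mathbb{R}^3$ can share a common base plane, so three distinct preimages are impossible.

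\textbf{Claims 3, 4, 5.} I would begin Claim 3 by noting that points of $\partial\Omega_S^\bfeta$ come only from preimages in $\partial\Omega_S$, since interior preimages are mapped to open subsets. Decomposing $\partial\Omega_S=I_\bfeta\cup C_\bfeta\cup N_\bfeta$, Claims 1 and 2 let me identify the local picture at each kind of preimage: at $x\in I_\bfeta$ or $x\in C_\bfeta$ the image is manifestly on $\partial\Omega_S^\bfeta$; at $x\in\operatorname{Int}(N_\bfeta)$ the twin preimage $y$ varies continuously in an open neighborhood of $x$ inside $\partial\Omega_S$, and the two opposite half-balls at $\bfeta(x)$ glue along their common tangent plane, filling a full neighborhood of $\bfeta(x)$ in $\Omega_S^\bfeta$, so $\bfeta(x)\notin\partial\Omega_S^\bfeta$. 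Only $\partial N_\bfeta$ (the topological boundary of $N_\bfeta$ in $\partial\Omega_S$) contributes via closure. Claim 4 follows by writing $\partial\Omega_F^\bfeta=\big(\partial\Omega\cap\overline{\Omega_F^\bfeta}\big)\cup\big(\partial\Omega_S^\bfeta\cap\overline{\Omega_F^\bfeta}\big)$ and checking that at $\bfeta(x)$ with $x\in\operatorname{Int}(C_\bfeta)$ the solid locally covers a full half-neighborhood of $\partial\Omega$ from inside, so the fluid does not reach $\bfeta(x)$; while at $x\in I_\bfeta\cup\partial I_\bfeta$ the fluid side is accessible. Claim 5 is then obtained by intersecting the two expressions and observing that $\bfeta(C_\bfeta)\cap\partial\Omega_F^\bfeta$ reduces to $\bfeta(\partial C_\bfeta)\subseteq\bfeta(\overline{I_\bfeta})$ and similarly that $\bfeta(\partial N_\bfeta)\subseteq\bfeta(\overline{I_\bfeta})$ by continuity of $\bfeta$.

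\textbf{Claim 6.} I would derive this from a pointwise inequality $\mathbf{1}_{I_\bfeta}(x)\leq\liminf_{n\to\infty}\mathbf{1}_{I_{\bfeta_n}}(x)$ and Fatou's lemma. Fix $x\in I_\bfeta$. Convergence $\bfeta_n\to\bfeta$ in $\mathcal{E}^q$ implies uniform $C^1$ convergence on $\overline{\Omega_S}$ through compact embedding $W^{2,q}\hookrightarrow C^{1,\alpha}$, and the infimum of $\det\nabla\bfeta_n$ is uniformly positive for $n$ large. Choose a ball $U\ni x$ small enough that $\bfeta$ is injective on $U\cap\overline{\Omega_S}$; by the quantitative inverse function theorem applied uniformly, the same holds for $\bfeta_n$ on $U\cap\overline{\Omega_S}$ for large $n$. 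On the compact complement $K=\overline{\Omega_S}\setminus U$, injectivity of $\bfeta$ at $x$ gives $\delta:=\operatorname{dist}(\bfeta(x),\bfeta(K))>0$; uniform convergence implies $\bfeta_n(x)\notin\bfeta_n(K)$ for large $n$. Together these show $x\in I_{\bfeta_n}$ eventually, which is the desired pointwise bound.

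\textbf{Main obstacle.} The most delicate step is the justification in Claim 3 that interior points of $N_\bfeta$ are sent into the interior of $\Omega_S^\bfeta$. This requires pairing Claims 1 and 2 (to know that exactly two sheets meet and that they share a tangent plane) with a local gluing argument showing the two opposing half-balls actually cover a full neighborhood of the image point, without leaving any ``gap'' on the shared tangent plane. A related technicality is handling $C_\bfeta$ in Claim 4, where one must distinguish $\operatorname{Int}(C_\bfeta)$ from $\partial C_\bfeta$ using that $\bfeta|_{\partial\Omega_S}$ is a $C^{1,\alpha}$ local diffeomorphism into $\partial\Omega$.
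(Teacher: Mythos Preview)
Your proposal is correct and follows essentially the same geometric strategy as the paper: local half-ball images forced by $\det\nabla\bfeta>0$, the Ciarlet--Ne\v{c}as condition to exclude overlapping interiors (Claims~1--2), the gluing argument at interior points of $N_\bfeta$ (Claim~3), and a Fatou argument on indicators (Claim~6). The paper phrases Claims~1--3 in terms of local graph representations rather than half-balls and tangent planes, but the content is identical.

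One difference worth noting: for Claim~6 the paper argues separately that $|\partial\Omega_S\setminus C_\bfeta|$ and $|\partial\Omega_S\setminus N_\bfeta|$ are lower semicontinuous and then combines them, whereas you go directly for the pointwise inequality $\mathbf{1}_{I_\bfeta}\leq\liminf\mathbf{1}_{I_{\bfeta_n}}$. Your route is cleaner; just remember to also verify $\bfeta_n(x)\notin\partial\Omega$ for large~$n$ (which follows immediately from uniform convergence and $\operatorname{dist}(\bfeta(x),\partial\Omega)>0$), since $I_\bfeta$ requires both injectivity and $x\notin C_\bfeta$. The paper's combination step at the end of its Claim~6 proof is in fact somewhat elliptic, so your direct argument is arguably an improvement.
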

\begin{proof}
\textbf{Claim 1.} Assume the opposite, i.e.\ there exists $x,y\in C_\bfeta$, $x\neq y$ such that $\bfeta(x)=\bfeta(y)\in\partial\Omega.$ Since $\bfeta\in\mathcal{E}^q$ we have $\det\nabla\bfeta>c$ and therefore there exists $\delta>0$ such that the restrictions $\bfeta_{| B_{\delta}(x)}$ and $\bfeta_{| B_{\delta}(y)}$ are each bijective. Moreover, since $\Omega$ is a $C^{1,\alpha}$ domain, there exists $l>0$ such that $B_l(\bfeta(x))\cap\partial\Omega$ can be represented as a graph of a function $\phi:V_l\to \mathbb{R}$, where $V_l$ is an open and connected set in $\mathbb{R}^2$, so that $\Omega$ lies below the graph of $\phi$. Now we consider sets $\partial\bfeta(B_{\delta}(x))$ and $\partial\bfeta(B_{\delta}(y))$. By assumption both sets contain point $\bfeta(x)$ and since $\bfeta\in C^1(\Omega_S,\R^3)$ tangents at that point coincide with the tangent to $\partial\Omega$. Therefore, by taking $\delta$ and $l$ smaller if needed, both $B_l(\bfeta(x))\cap\partial\bfeta(B_{\delta}(x))$ and $B_l(\bfeta(x))\cap\partial\bfeta(B_{\delta}(y))$ can be represented as graphs of functions $\phi_1,\phi_2:V_l \to \mathbb{R}$, respectively, in the same coordinate system such that $\phi(x_0)=\phi_1(x_0)=\phi_2(x_0)$, for some $x_0\in V_{l}$. Furthermore, since $\Omega_S^\bfeta\subseteq \Omega$ we have $\phi_1,\phi_2\leq \phi$ on $V_l$. However, this means that the area that lies below the function $\phi_m:=\min\{\phi_1,\phi_2\}$ belongs to $\bfeta (B_{\varepsilon}(x))\cap \bfeta (B_{\varepsilon}(y))$, which is a contradiction with $\bfeta \in \mathcal{E}^q$ (more precisely it is in contradiction with the Ciralet-Ne\v cas condition). \\

\noindent
\textbf{Claim 2.} This claim is similar to the first one (see also \cite[Theorem 2]{ciarletInjectivitySelfcontactNonlinear1987}). Assume the opposite, i.e. there exists $x,y,z\in N_\bfeta$ such that $x\neq y\neq z\neq x$ and $\bfeta(x)=\bfeta(y)=\bfeta(z)$. Again we can choose $\delta>0$ such that the corresponding restriction of $\bfeta$ to $B_{\delta}(x)$, $B_{\delta}(y)$ and $B_{\delta}(z)$ are bijective. By reasoning in the same way as in the proof of Claim 1, we conclude that $\partial\bfeta(B_\delta(x))$, $\partial\bfeta(B_\delta(y))$ and $\partial\bfeta(B_\delta(z))$ that can be represented as graphs of functions in some neighbourhood of $\bfeta(x)$ in the same coordinate system (notice that in this case $\Omega_S^\bfeta$ can lie on both sides of the graphs). However, this is again in contradiction with the Ciralet-Ne\v cas condition.\\

\noindent
\textbf{Claim 3.} First note that $\bfeta(C_\bfeta)\cup\bfeta(I_\bfeta)\subseteq\partial\Omega_S^\bfeta\subseteq\bfeta(\partial\Omega_S)=\bfeta(I_\bfeta)\cup\bfeta(C_\bfeta)\cup\bfeta(N_\bfeta)$. Therefore it is enough to prove $\partial\Omega_S^\bfeta\setminus\bfeta(I_\bfeta\cup C_\bfeta)=\bfeta(\partial N_\bfeta).$

Let $x\in\partial\Omega_S^\bfeta\setminus\bfeta(I_\bfeta\cup C_\bfeta)$. Then there exists $y\in N_\bfeta$ such that $\bfeta(y)=x.$ Moreover, by definition of $N_\bfeta$ and Claim 2, there exists unique $z\in N_\bfeta$ such that $\bfeta(y)=\bfeta(z)$. We will prove that at least one of $y,z$ is in $\partial N_\bfeta$. Assume the opposite, i.e. $y,z\in {\rm int}N_\bfeta$. Then there exist open sets $O_1$ and $O_2$ in $N_\bfeta$ such that $y\in O_1$ and $z\in O_2$ and $\bfeta(O_1)\subseteq \bfeta(O_2)$. In this case, we can represent $\bfeta(O_1)\cap B_\delta(\bfeta(x))$ as a graph of a function $\phi:V_\delta\to \mathbb{R}$, for $\delta>0$ small enough, so that above and below is $\Omega_S^\bfeta$. This means that $\bfeta(O_1)\cap B_\delta(\bfeta(x))\subseteq \text{int}(\bfeta(\overline{\Omega_S}))=\Omega_S^\bfeta$, which is a contradiction with $x\in \partial\Omega_S^\bfeta$. Therefore $x\in \bfeta(\partial N_\bfeta)$ and the first inclusion is proved.

Let $x\in \bfeta(\partial N_\bfeta)$ and let $z\in\partial N_\bfeta$ be such that $\bfeta(z)=x$. Then there is a sequence  $z_n\in I_\bfeta$ such that $z_n\to z$ (since otherwise $z_n \in C_\bfeta$ would imply that $z\in C_\bfeta$, due to closedness of $C_\bfeta$ which is proved in Theorem \ref{lemma:contact2} Claim 3). Then $\bfeta(z_n)\in\partial\Omega_S^\bfeta$ and $\bfeta(z_n)\to \bfeta(z)=x$. Since $\partial\Omega_S^\bfeta$ is closed we have $x\in \partial\Omega_S^\bfeta$. Therefore we have proved the second inclusion and thus the claim. \\

\noindent
\textbf{Claim 4.} First we prove inclusion $\bfeta(\overline{I_\bfeta})\cup (\partial\Omega\setminus \bfeta(\text{int}(C_\bfeta)))\subseteq\partial \Omega_F^\bfeta$. Since $\bfeta$ is an open mapping (because $\bfeta\in \mathcal{E}^q$) and $\partial\Omega_F^\bfeta$ is a closed set it is enough to prove 
$\bfeta(I_\bfeta)\cup (\partial\Omega\setminus \bfeta(C_\bfeta))\subseteq\partial \Omega_F^\bfeta$. Let $x\in \bfeta(I_\bfeta)\cup (\partial\Omega\setminus \bfeta(C_\bfeta))$ and denote $y = \bfeta(x)$. Then, for every $\delta< \text{dist}(\bfeta(x),\partial\Omega)$, one has $B_\delta(\bfeta(x))\setminus\Omega_S^\bfeta\neq \emptyset$ and $B_\delta(\bfeta(x))\cap \Omega^c=\emptyset$, which then implies that $(B_\delta(\bfeta(x))\setminus\Omega_S^\bfeta)\subseteq \overline{\Omega_F^\bfeta}$ (since  $\overline{\Omega} = \overline{\Omega_F^\bfeta}\cup \overline{\Omega_S^\bfeta}$). Therefore, in this case $\bfeta(x)\in \partial\Omega_F^\bfeta$. If on the other hand $y\in \partial\Omega\setminus \bfeta(C_\bfeta)$, then for every $\delta< \text{dist}(y,\partial\Omega_S^\bfeta)$, one has $B_\delta(y)\cap\Omega_S^\bfeta=\emptyset$ and $B_\delta(y)\cap \Omega\neq\emptyset$, so $B_\delta(y)\cap \Omega\subseteq \overline{\Omega_F^\bfeta}$ and consequently $y\in \partial\Omega_F^\bfeta$.

To prove other inclusion first we notice that $\partial\Omega_F^\bfeta\subseteq \partial\Omega\cup \bfeta(\partial\Omega_S)$. Let $x\in \partial\Omega_F^\bfeta$. If $x\in \partial\Omega$, by definition of $C_\bfeta$, one has $x\notin {\rm Int}C_{\bfeta}$. Similarly, by definition of $N_\bfeta$ it immediately follows $x\notin{\rm Int}N_\bfeta$. Therefore if $x\notin\partial\Omega$, we have $x\in \overline{I_\bfeta}$ which proves the second inclusion.\\

\noindent
\textbf{Claim 5.} Is a direct consequence of previous two claims.\\

\noindent
\textbf{Claim 6.}
First we prove $|\Omega_S\setminus C_\bfeta|\leq \limsup_{n}|\Omega_S\setminus C_{\bfeta_n}|$. Let $\chi:=\chi|_{\Omega_S\setminus C_\bfeta}$ and $\chi_n:=\chi|_{\Omega_S\setminus C(\bfeta_n)}$ be characteristic functions of sets $\Omega_S\setminus C(\bfeta_n)$ and $\Omega_S\setminus C(\bfeta_n)$, respectively.

Let $X\in{\rm int}(\Omega_S\setminus C(\bfeta_n))$. By definition of $C(\bfeta)$ it means that $d(\bfeta(x),\partial\Omega)>0.$ Morever, by uniform convergence of $\bfeta_n$ there exists $n_x\in\N$ such that $d(\bfeta_n(x),\partial\Omega_S)>0$, $n\geq n_x$, i.e. $1=\chi(x)=\chi_n(x)$, $n\geq n_x$. Therefore, we proved:
$$
(1-\chi)\leq \liminf\limits_{n\to\infty}(1-\chi_n)\quad {\rm a.\;e.\; in}\quad \Omega_S.
$$
Now, by Fatou's Lemma we have
$$
|\Omega\setminus C(\bfeta)|=\int_{\partial\Omega_S}(1-\chi)
\leq \int_{\partial\Omega_S}\liminf\limits_{n\to\infty}(1-\chi_n)
\leq \liminf\limits_{n\to\infty} \int_{\partial\Omega_S}(1-\chi_n)
=\liminf\limits_{n\to\infty} |\Omega\setminus C_{\bfeta_n}|.
$$
By working additionally with characteristic function for $\Omega_S\setminus N_\bfeta$ and $\Omega_S\setminus N_\bfeta$, we analogously prove $|\Omega_S\setminus N_\bfeta|\leq \limsup_{n}|\Omega_S\setminus N_{\bfeta_n}|$, so
\begin{align*}
|I_\bfeta|
&=|\Omega\setminus \left (C_\bfeta\cup N_\bfeta\right )|
\leq \liminf\limits_{n\to\infty} |\Omega\setminus \left (C_{\bfeta_n}\cup N_{\bfeta_n}\right )|
=\liminf\limits_{n\to\infty} |I_{\bfeta_n}|. \qedhere
\end{align*}
\end{proof}

Now we are in the position to prove a theorem that gives us some geometrical properties of the fluid and structure domains and their interface,  associated to a weak solution.
\begin{thm}\label{lemma:contact2}
Let $(\bfeta,\rho,\bb{u})$ satisfy the energy inequality $\eqref{enineq}$ and let $\int_{\Omega_F^\bfeta(t)} \rho(t)=m>0$ for all $t\in [0,T]$. Then, one has:
\begin{enumerate}
    \item  $|\Omega_F^\bfeta(t)|\geq c(E_0,m,\gamma)>0$ and $|\Omega_S^\bfeta(t)|\geq c(E_0)>0$, for all $t\in[0,T]$;
     \item $|\partial\Omega_F^\bfeta(t)|\geq c(E_0,m,\gamma)>0$ and $|\partial\Omega_S^\bfeta(t)|\geq c(E_0)>0$, for all $t\in[0,T]$;
     \item $Q_\bfeta^I$ is open and $Q_\bfeta^C,Q_\bfeta^N$ are closed in $[0,T]\times \partial\Omega_S$;
    \item There exists a constant $c=c(E_0)$ independent of $\bfeta$ such that we have the following lower bound on the area of the fluid-structure interface: $|\partial\Omega_F^\bfeta(t)\cap \partial\Omega_S^\bfeta(t)|\geq c(E_0),\quad t\in [0,T].$
\end{enumerate}

\end{thm}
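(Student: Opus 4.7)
The plan is to treat the four claims in the stated order, leveraging at each step the structural information from Lemma~\ref{lemma:contact1} together with the energy inequality \eqref{enineq}.

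For Claim 1, I would observe that \eqref{enineq} forces $E(\bfeta(t))\le E_0$, so coercivity (E1) bounds $\bfeta(t)$ in $W^{2,q}(\Omega_S)\hookrightarrow C^{1,\alpha}$, and (E2) yields a uniform $\det\nabla\bfeta\ge\varepsilon_0(E_0)>0$. Hence
\[
|\Omega_S^\bfeta(t)|=\int_{\Omega_S}\det\nabla\bfeta(t)\,dx\ge \varepsilon_0(E_0)|\Omega_S|.
\]
For the fluid, \eqref{enineq} also controls $\|\rho(t)\|_{L^\gamma(\Omega_F^\bfeta(t))}^\gamma\le(\gamma-1)E_0$, and Hölder's inequality applied to the conserved mass $m=\int_{\Omega_F^\bfeta(t)}\rho$ gives a lower bound on $|\Omega_F^\bfeta(t)|$ in terms of $E_0,m,\gamma$. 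Claim 2 then follows from the classical isoperimetric inequality $\mathcal{H}^2(\partial A)\ge c|A|^{2/3}$ applied to $A=\Omega_S^\bfeta(t)$ and $A=\Omega_F^\bfeta(t)$, which are sets of finite perimeter since $\bfeta|_{\partial\Omega_S}$ is Lipschitz.

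For Claim 3, I would first use Aubin-Lions together with $\bfeta\in L^\infty(0,T;W^{2,q}(\Omega_S))$, $\partial_t\bfeta\in L^2(0,T;H^1(\Omega_S))$ and $W^{2,q}\hookrightarrow C^{1,\alpha}$ to produce a continuous representative of $\bfeta$ on $[0,T]\times\overline{\Omega_S}$. Then $Q_\bfeta^C$ is closed as the preimage of the closed set $\partial\Omega$. For $Q_\bfeta^N$, let $(t_n,x_n)\to(t,x)$ with $(t_n,x_n)\in Q_\bfeta^N$ and pick companions $y_n\neq x_n$ such that $\bfeta(t_n,x_n)=\bfeta(t_n,y_n)$. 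By compactness of $\partial\Omega_S$ I may assume $y_n\to y$. If $y\neq x$, continuity directly yields $(t,x)\in Q_\bfeta^N$; if $y=x$, the uniform $C^{1,\alpha}$ bound and $\det\nabla\bfeta\ge\varepsilon_0$ provide a fixed radius on which $\bfeta(t_n,\cdot)$ is injective for all large $n$, contradicting $x_n\neq y_n$. Finally $Q_\bfeta^I$ is open as complement of the closed set $Q_\bfeta^C\cup Q_\bfeta^N$ in $[0,T]\times\partial\Omega_S$.

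Claim 4 is the heart of the theorem, and my plan is to apply the relative isoperimetric inequality in the $C^{1,\alpha}$ domain $\Omega$,
\[
\min(|A|,|\Omega|-|A|)^{2/3}\le C(\Omega)\,P(A;\Omega),
\]
with $A=\Omega_S^\bfeta(t)$. Claim~1 ensures that both $|A|$ and $|\Omega\setminus A|\ge |\Omega_F^\bfeta(t)|$ are bounded below by a constant depending only on $E_0,m,\gamma,\Omega,\Omega_S$. By Lemma~\ref{lemma:contact1} Claim~3, the part of $\partial A$ lying strictly inside $\Omega$ is contained in $\bfeta(I_\bfeta(t))\cup \bfeta(\partial N_\bfeta(t))$; and since $\partial N_\bfeta(t)$ is at most $1$-dimensional inside the $2$-dimensional surface $\partial\Omega_S$, and $\bfeta$ is Lipschitz, $\mathcal{H}^2(\bfeta(\partial N_\bfeta(t)))=0$. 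Therefore $P(A;\Omega)\le \mathcal{H}^2(\bfeta(\overline{I_\bfeta(t)}))$, which equals $|\partial\Omega_F^\bfeta(t)\cap\partial\Omega_S^\bfeta(t)|$ by Lemma~\ref{lemma:contact1} Claim~5.

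The main obstacle I anticipate is to make rigorous the use of the relative isoperimetric inequality in the presence of possible cusps on $\partial\Omega_S^\bfeta$. I have to verify that $\Omega_S^\bfeta$ is a set of finite perimeter in $\Omega$ and that its reduced boundary inside $\Omega$ coincides, up to an $\mathcal{H}^2$-null set, with $\bfeta(I_\bfeta(t))$. This should follow from the $C^{1,\alpha}$-rectifiability of $\bfeta(\partial\Omega_S)$ combined with the area formula and Lemma~\ref{lemma:contact1}, but checking that the cuspidal contact parts contribute nothing extra is the delicate point.
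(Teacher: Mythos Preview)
Your treatment of Claims 1--3 matches the paper's essentially verbatim: H\"older on the conserved mass for the fluid volume, the Jacobian lower bound from (E2) for the solid volume, the isoperimetric inequality for the boundary areas, and the same continuity/compactness/local-injectivity argument for the topology of $Q_\bfeta^C$, $Q_\bfeta^N$, $Q_\bfeta^I$.

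For Claim 4 your route is genuinely different from the paper's. The paper does \emph{not} use any isoperimetric inequality: it argues by contradiction and compactness. One first notes $|\bfeta(I_\bfeta)|\ge C_0|I_\bfeta|$ from the surface-Jacobian bound, so it suffices to bound $|I_\bfeta|$ from below uniformly on the sublevel set $X$. If a sequence $\bfeta_n\in X$ had $|I_{\bfeta_n}|\to 0$, then after extracting a limit $\bfeta$ in $\mathcal E^q$, the lower semicontinuity $|I_\bfeta|\le\liminf_n|I_{\bfeta_n}|$ from Lemma~\ref{lemma:contact1}(6) forces $|I_\bfeta|=0$; but $I_\bfeta$ is open and nonempty (since both $\Omega_S^\bfeta$ and $\Omega_F^\bfeta$ have positive volume), a contradiction. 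Your relative-isoperimetric approach is more constructive and would yield an explicit constant, which is a nice feature the compactness argument lacks.

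However, your proof contains a genuine error: the assertion that $\partial N_\bfeta(t)$ is ``at most $1$-dimensional'' and therefore $\mathcal{H}^2(\bfeta(\partial N_\bfeta(t)))=0$ is false. The boundary of an open (or closed) subset of a $2$-manifold can have positive $2$-dimensional measure, and Section~\ref{rem:cantor} of the paper constructs precisely such an example for $\partial I_\bfeta$ (hence for $\partial N_\bfeta$) via a fat-Cantor-type contact pattern. So the step you flag as delicate is in fact broken as written. Fortunately the conclusion you need survives: the proof of Lemma~\ref{lemma:contact1}(3) shows that every point of $\partial N_\bfeta$ is a limit of points in $I_\bfeta$ (otherwise it would lie in the closed set $C_\bfeta$), hence $\partial N_\bfeta\subset\overline{I_\bfeta}$ and $\bfeta(\partial N_\bfeta)\subset\bfeta(\overline{I_\bfeta})$. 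Combined with $\bfeta(C_\bfeta)\subset\partial\Omega$ this gives $\partial^*\Omega_S^\bfeta\cap\Omega\subset\partial\Omega_S^\bfeta\cap\Omega\subset\bfeta(\overline{I_\bfeta})$ directly, without any dimension count, and your isoperimetric argument then goes through.
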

\begin{proof}

\noindent
\textbf{Claim 1.}
Since mass of the fluid is fixed, one has
\begin{eqnarray*}
    m=||\rho||_{L^1(\Omega_F^\bfeta(t))}\leq ||\rho||_{L^\gamma(\Omega_F^\bfeta(t))}||1||_{L^\frac{\gamma}{\gamma-1}(\Omega_F^\bfeta(t))}\leq E_0^{\frac1\gamma}|\Omega_F^\bfeta|^{\frac{\gamma-1}{\gamma}},
\end{eqnarray*}
so
\begin{eqnarray*}
    |\Omega_F^\bfeta(t)|\geq \frac{m^{\frac{\gamma}{\gamma-1}}}{E_0^{\frac1{\gamma-1}}},
\end{eqnarray*}
for all $t\in [0,T]$. Note that $E_0>0$ since $m>0$. Now, since $\det\nabla\bfeta\geq c$ on $[0,T]\times \Omega_S$, one has $|\Omega_S^\bfeta(t)| = \int_{\Omega_S}\det\nabla\bfeta(t)\geq c |\Omega_S|>0$.\\

\noindent
\textbf{Claim 2.} First, since $\text{int}(I_\bfeta(t))$ and $\partial\Omega\setminus C_\bfeta(t)$ have a 
well-defined surface element, by Lemma $\ref{lemma:contact1}$ Claim 3 and continuity, one has that $\partial\Omega_F^\bfeta(t)$ has a well-defined surface element everywhere and therefore it is a measurable surface for all $t\in[0,T]$ (and similarly for the surface $\partial\Omega_S^\bfeta(t)$). From the isoperimetric inequality we have
\begin{eqnarray*}
    |\partial\Omega_F^\bfeta(t)|\geq 3 |\Omega_F^\bfeta(t)|^{\frac23}\left (\frac{4}{3}\pi\right )^{\frac13}\geq c(E_0,m,\gamma),
\end{eqnarray*}
and similarly for $|\partial\Omega_S^\bfeta(t)|$. \\

\noindent
\textbf{Claim 3.} Closedness of $Q^C_\bfeta$ is a direct consequence of continuity of $\bfeta$. Let $(t_n,x_n)$ be a convergent sequence in $Q^C_\bfeta$ and $(t,x)$ its limit. Then we have
\begin{eqnarray*}
    0=\lim\limits_{n\to\infty} \text{dist}(\bfeta(t_n,x_n),\partial\Omega)=\text{dist}(\bfeta(\lim\limits_{n\to\infty} (t_n,x_n)),\partial\Omega) = \text{dist}(\bfeta(t,x),\partial\Omega),
\end{eqnarray*}
and therefore $(t,x)\in Q^C_\bfeta.$

To prove that $Q^N_\bfeta$ is closed, we take a convergent sequence $(t_n,x_n)$ in $Q^N_\bfeta$ and $(t,x)=\lim\limits_{n\to\infty} (t_n,x_n)$. By Lemma \ref{lemma:contact1}, Claim 2), and the definition of $Q^N_\bfeta$ for every $n$ there exists a unique $y_n\in N_{\bfeta(t_n)}$ such that $y_n\neq x_n$ and $\bfeta(t_n,x_n)=\bfeta(t_n,y_n)$. Since $y_n$ is a sequence in $\partial\Omega_S$ which is compact, there exists a convergent subsequence which we will still denote by $y_n$. 
By the continuity of $\bfeta$, one concludes
\begin{eqnarray*}
    \bfeta(t,x)=\bfeta\left(\lim\limits_{n\to\infty} (t_n,x_n)\right) = \bfeta\left(\lim\limits_{n\to\infty} (t_n,y_n)\right) = \bfeta(t,y)
\end{eqnarray*}
where $y:=\lim\limits_{n\to\infty} y_n$. Since $\bfeta$ is locally bijective for on balls of uniform radius $\varepsilon$, one must have $|x_n-y_n|\geq \varepsilon$ for all $n\geq n_0\in \mathbb{N}$, so $x\neq y$, which proves that $Q_\bfeta^N$ is closed. Finally, for the last set, since $Q_\bfeta^C\cup Q_\bfeta^N$ is now closed, one gets that $Q_\bfeta^T=[0,T]\times\partial\Omega_S\setminus(Q_\bfeta^C\cup Q_\bfeta^N)$ is open. \\

\noindent
\textbf{Claim 4.} Let $X=\{\bfeta\in \mathcal{E}^q:\|\bfeta\|_{W^{2,q}(\Omega_S)}\leq E_0, ~|\Omega_S^\bfeta|\leq C(E_0)<|\Omega|\}$. First, note that by Lemma \ref{lemma:contact1}, Claim 5 we have
$$
\partial\Omega_F^\bfeta\cap \partial\Omega_S^\bfeta=\bfeta(\overline{I_\bfeta}).
$$
Moreover, there exists a constant $C_0$ such that
$$
|\bfeta(I_\bfeta)|=\left |\int_{I_\bfeta}\det\nabla\bfeta|\nabla\bfeta^{-\tau}{\bf n}|\right |\geq C_0|I_\bfeta|, \quad\bfeta\in X.
$$
Therefore, since $|\bfeta(\overline{I_\bfeta})| \geq |\bfeta(I_\bfeta)|$, it is enough to prove that there exists a constant $c=c(E_0)$ such that 
$$
|I_\bfeta|\geq c(E_0)>0, \quad \text{for all } \bfeta\in X.
$$
Assume the opposite, i.e. there exists a sequence $\bfeta_n\in X$ such that $|I_{\bfeta_n}|\to 0.$ 
By Lemma \ref{lemma:contact1}, Claim 6 we have:
$$
|I_\bfeta|\leq \liminf\limits_{n\to\infty} |I_{\bfeta_n}|=0.
$$
which is a contradiction, since $I_\bfeta$ is an open set and it must have a positive measure. 
\end{proof}

\subsection{Closed components of the the fluid domain: a surprising example}\label{rem:cantor}
 While we have all these conditions, the boundary of the fluid region can still be highly irregular. Assuming sufficient regularity of the fixed reference domain $\Omega_S$ and finite elastic energy, the surface deformation $\bfeta|_{\partial \Omega_S}$ will be a regular (i.e.\ local diffeomorphism) $C^{1,\alpha}$-map of a $C^{1,\alpha}$-domain. However it will not be injective or even an immersion.
 
 Here, in particular the latter is what makes the situtation interesting, but difficult. While it can be already seen from simple two dimensional examples (cmp. Figure \ref{fig:my_label}), that this will lead to irregularities in the fluid domain, this still leaves the question of how many of these irregularities can appear. In other words, what is the cardinality and the measure of $\partial I_\bfeta \subset \partial \Omega_S$? The somewhat surprising answer is that even in an otherwise benign situation, this can be a set of full, i.e.\ $d-1=2$-dimensional measure.
 
 To see this, let us first note, that locally the fluid domain always looks like one of three possible configurations: the bulk, the area above a $C^{1,\alpha}$-graph or the area between two such graphs. The former two are well understood. Only in the latter configuration can these cusps appear. For our example we can further simplify the situation by considering a problem with a vertical symmetry, i.e.\ the functions defining the two graphs differ by only a sign. Then the question reduces to the following: Given a smooth, non-negative function $f:Q \subset \mathbb{R}^{2} \to \mathbb{R}$, what can (locally) be said about the size of the set $\partial \{x \in Q: f(x) > 0\}$?
 
 It turns out, that the answer is, not much, as the following Cantor-type construction shows. Take any fixed non-negative function $g: C^{\infty}(\mathbb{R})$ with $\supp g = [-1,1]$. Then for a sequence of vertical and horizontal scalings $(a_k)_{k\in\mathbb{N}},(b_k)_{k\in\mathbb{N}}$, for each $k$, we add $2^k$ copies of $x\mapsto a_k g(b_kx)$ to our function, shifted in such a way that they sit precisely in the gaps of all the previous copies (see Figure \ref{fig:cantor}). The final result of this construction will be our function $f$. As the choice of $a_k$ does not influence the set $\partial \{x \in Q: f(x) > 0\}$, this can be done in a way such that $f$ is smooth and or arbitrary small norm. If we now choose $b_k = \frac{1}{4^k}$, it is easy to show that $\partial \{x \in Q: f(x) > 0\}$ ends up being the well known ``fat''-Cantor set, which has non-zero $1$-dimensional measure. Similar constructions can be done in higher dimensions, where it is even possible for the resulting fluid domain to still be connected.
 
 Nevertheless, one should keep in mind that such a situation is not necessarily unnatural. If two mostly flat elastic solids (or parts of the same solid) are pushed against each other in a fluid, then small inclusions of the fluid between them are to be expected and there is no a-priori way to bound their size or number. In fact, the solid itself could already have this type of roughness in its relaxed configuration. It is thus impossible to exclude such a behaviour in the generic setup that we study here.

\begin{figure}[ht]
 \begin{center}
 \begin{tikzpicture}
  \draw (-2,0) .. controls +(.5,0) and +(-.5,0) .. (0,.7) .. controls +(.5,0) and +(-.5,0) .. (2,0);
  \draw (-2,0) .. controls +(.5,0) and +(-.5,0) .. (0,-.7) .. controls +(.5,0) and +(-.5,0) .. (2,0);
  \foreach \a in {-1,1} {
    \begin{scope}[shift={(4*\a,0)}, scale=.333]
      \draw (-2,0) .. controls +(.5,0) and +(-.5,0) .. (0,.7) .. controls +(.5,0) and +(-.5,0) .. (2,0);
      \draw (-2,0) .. controls +(.5,0) and +(-.5,0) .. (0,-.7) .. controls +(.5,0) and +(-.5,0) .. (2,0);
      \foreach \b in {-1,1} {
        \begin{scope}[shift={(4*\b,0)}, scale=.333]
          \draw (-2,0) .. controls +(.5,0) and +(-.5,0) .. (0,.7) .. controls +(.5,0) and +(-.5,0) .. (2,0);
          \draw (-2,0) .. controls +(.5,0) and +(-.5,0) .. (0,-.7) .. controls +(.5,0) and +(-.5,0) .. (2,0);
            \foreach \c in {-1,1} {
            \begin{scope}[shift={(4*\c,0)}, scale=.333]
            \draw (-2,0) .. controls +(.5,0) and +(-.5,0) .. (0,.7) .. controls +(.5,0) and +(-.5,0) .. (2,0);
            \draw (-2,0) .. controls +(.5,0) and +(-.5,0) .. (0,-.7) .. controls +(.5,0) and +(-.5,0) .. (2,0);
            \draw (-6,0) -- (-2,0); \draw (2,0)--(6,0);
            \end{scope}
        }
        \end{scope}
      }
    \end{scope}
 }
 \end{tikzpicture}
 \end{center}

 \caption{A sketch of the construction given above. Vertical and horizontal scaling of the individual domain components are exagarated for better visibility.}
 \label{fig:cantor}
\end{figure}
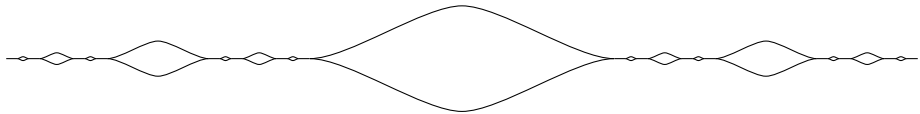

\section{Formal equivalence between weak and smooth solutions}\label{form:eq}
In this section, the goal is to show that our concept of weak solution is justified. This will be done in two steps. First we will show that all smooth solutions $\eqref{elasticity}-\eqref{initialdata}$ are in fact weak solutions in the sense of Definition $\ref{weaksol}$. Then, we will show that a weak solution $(\rho,\bb{u},\bfeta)$, which is smooth, with defect measure $\pi$, satisfies the problem $\eqref{elasticity}-\eqref{initialdata}$ pointwise and that the defect measure vanishes.

\subsection{Derivation of weak formulation for smooth solutions}\label{derivation}
Let $(\rho,\bb{u},\bfeta)$ be a smooth solution to the problem $\eqref{elasticity}-\eqref{initialdata}$. The goal is to prove that this is also a weak solution in the sense of Definition \ref{weaksol}. Let us first observe the fluid domain $\Omega_F^\bfeta$. Due to Lemma $\ref{lemma:contact1}$ Claim 4, one has that $\partial \Omega_F^\bfeta=\bfeta(\overline{I_\bfeta})\cup \partial\Omega\setminus \bfeta(\text{Int}(C_\bfeta))$. Assuming that $|\bfeta(\overline{I_\bfeta})| = |\bfeta(I_\bfeta)$| and $|\partial\Omega\setminus \bfeta(\text{Int}(C_\bfeta))| = |\partial\Omega\setminus \bfeta(C_\bfeta)|$, one has that both $\bfeta(I_\bfeta)$ and $\partial\Omega\setminus \bfeta(C_\bfeta)$ have a well-defined trace and normal vector, so one can use the divergence theorem on $\Omega_F^\bfeta$
\begin{eqnarray*}
    \int_{\Omega_F^\bfeta} \nabla \cdot\bb{f} = \int_{\bfeta(I_\bfeta)} \bb{f} \cdot \bb{n} dS+\int_{\partial\Omega\setminus \bfeta(C_\bfeta)}\bb{f} \cdot \bb{n}^\bfeta dS^\bfeta,
\end{eqnarray*}
where $\bb{n}^\bfeta$ and $dS^\bfeta$ are the normal vector and the surface element on $\partial \Omega_S^\bfeta$, respectively. On the other hand, for any function $\bb{f}\in C^{\infty}([0,T]\times \Omega)$, one has the Reynolds transport theorem on $\Omega_S^\bfeta(t)$
\begin{eqnarray*}
    \frac{d}{dt}\int_{\Omega_S^\bfeta(t)} \bb{f} = \int_{\Omega_S^\bfeta(t)}\partial_t\bb{f}+\int_{\partial\Omega_S^\bfeta(t)} \bb{f} (\partial_t\bfeta \cdot \bb{n}^\bfeta)\circ \bfeta^{-1}dS^\bfeta,
\end{eqnarray*}
so
\begin{eqnarray*}
     &&\frac{d}{dt}\int_{\Omega_F^\bfeta(t)} \bb{f} = \frac{d}{dt}\int_{\Omega} \bb{f} -  \frac{d}{dt}\int_{\Omega_S^\bfeta(t)} \bb{f}\\
     &&= \int_{\Omega}\partial_t\bb{f}- \int_{\Omega_S^\bfeta(t)}\partial_t\bb{f}-\int_{\partial\Omega_S^\bfeta(t)} \bb{f} (\partial_t\bfeta \cdot \bb{n}^\bfeta)\circ \bfeta^{-1} dS^\bfeta \\
    && = \int_{\Omega_F^\bfeta(t)}\partial_t\bb{f}-\int_{\partial\Omega_S^\bfeta(t)} \bb{f} (\partial_t\bfeta \cdot \bb{n}^\bfeta)\circ \bfeta^{-1} dS^\bfeta.
\end{eqnarray*}
Finally, since $\partial_t \bfeta(t,x)=\partial_t \bfeta(t,y)$ whenever $\bfeta(t,x)=\bfeta(t,y)$, the last integral vanishes on $N_\bfeta(t)$, so one concludes that the Reynolds transport theorem holds on $\Omega_F^\bfeta(t)$ in the following form
\begin{eqnarray}
    &\frac{d}{dt}\int_{\Omega_F^\bfeta(t)} \bb{f} = \int_{\Omega_F^\bfeta(t)}\partial_t\bb{f}-\int_{\bfeta(t,I_\bfeta(t))} \bb{f} (\partial_t\bfeta \cdot \bb{n}^\bfeta)\circ \bfeta^{-1} dS^\bfeta \label{reynolds:omega:f}
\end{eqnarray}
for all functions $\bb{f}$ defined on $(0,T)\times \Omega$ for which the above terms are well-defined. We will assume that both $\rho$ and $\rho \bb{u}$ can be represented as restrictions of functions defined on $(0,T)\times \Omega$ which are regular enough so that $\eqref{reynolds:omega:f}$ holds for them.

Multiplying the fluid momentum equation with a test function $\boldsymbol\xi \in C_0^\infty([0,T)\times \overline{\Omega})$, and then integrating over $(0,T)\times \Omega_F^\bfeta(t)$ gives us
\begin{eqnarray}
    \int_0^T \int_{\Omega_F^\bfeta(t)}\partial_t (\rho\mathbf{u})\cdot \boldsymbol\xi + \int_0^T\int_{\Omega_F^\bfeta(t)} (\nabla \cdot (\rho\mathbf{u}\otimes \mathbf{u}))\cdot \boldsymbol\xi &=&  \int_0^T \int_{\Omega_F^\bfeta(t)}(\nabla \cdot\mathbb{S}(\vu))\cdot \boldsymbol\xi-\int_0^T \int_{\Omega_F^\bfeta(t)} \nabla\rho^\gamma \cdot \boldsymbol\xi. \nonumber\\
    \label{fl}
\end{eqnarray}
First, by $\eqref{reynolds:omega:f}$, one has
\begin{eqnarray*}
    &&\int_0^T \int_{\Omega_F^\bfeta(t)}\partial_t (\rho\mathbf{u})\cdot \boldsymbol\xi = \int_0^T \int_{\Omega_F^\bfeta(t)}\partial_t (\rho\mathbf{u}\cdot \boldsymbol\xi) - \int_0^T \int_{\Omega_F^\bfeta(t)}\rho\mathbf{u}\cdot \partial_t \boldsymbol\xi\\
    &&=\int_0^T\underbrace{\frac{d}{dt}\int_{\Omega_F^\bfeta(t)} \rho\mathbf{u}\cdot \boldsymbol\xi}_{\mathclap{=-\int_{\Omega_F^\bfeta(t)} (\rho\mathbf{u})_0\cdot \boldsymbol\xi(0,\cdot)}}+\int_0^T\int_{\bfeta(t,I_\bfeta(t))}\rho\mathbf{u}\cdot \boldsymbol\xi (\partial_t\bfeta \cdot \bb{n})\circ \bfeta^{-1} dS -\int_0^T \int_{\Omega_F^\bfeta(t)}\rho\mathbf{u}\cdot \partial_t \boldsymbol\xi.
\end{eqnarray*}
Next
\begin{eqnarray*}
    &&\int_0^T\int_{\Omega_F^\bfeta(t)} (\nabla \cdot (\rho\mathbf{u}\otimes \mathbf{u}))\cdot \boldsymbol\xi\\
    &&=    \int_0^T\int_{\bfeta(t,I_\bfeta(t))}  (\rho\mathbf{u}\cdot \boldsymbol\xi) (\bb{u}\cdot \bb{n}^\bfeta)dS + \underbrace{\int_0^T\int_{\partial\Omega\setminus \bfeta(t,C_\bfeta(t))}(\rho\mathbf{u}\cdot \boldsymbol\xi) (\bb{u}\cdot \bb{n})dS}_{=0} -    \int_0^T\int_{\Omega_F^\bfeta(t)}  (\rho \bb{u}\otimes \bb{u}):\nabla\boldsymbol\xi
\end{eqnarray*}
since $\boldsymbol\xi=0$ on $\partial\Omega$, so combining the above identites and using the kinematic fluid-structure coupling condition $\eqref{kinc}$ to cancel the boundary terms, one obtains
\begin{eqnarray}
    &&\int_0^T \int_{\Omega_F^\bfeta(t)}\partial_t (\rho\mathbf{u})\cdot \boldsymbol\xi+ \int_0^T\int_{\Omega_F^\bfeta(t)} (\nabla \cdot (\rho\mathbf{u}\otimes \mathbf{u}))\cdot \boldsymbol\xi \nonumber \\
    &&= -\int_{\Omega_F^\bfeta(t)} (\rho\mathbf{u})_0\cdot \boldsymbol\xi(0,\cdot) -\int_0^T \int_{\Omega_F^\bfeta(t)}\rho\mathbf{u}\cdot \partial_t \boldsymbol\xi-    \int_0^T\int_{\Omega_F^\bfeta(t)}  (\rho \bb{u}\otimes \bb{u}):\nabla\boldsymbol\xi.
    \label{fl:calc0}
\end{eqnarray}
Next, one has
\begin{eqnarray}
    &&\int_0^T \int_{\Omega_F^\bfeta(t)}(\nabla \cdot\mathbb{S}(\vu))\cdot \boldsymbol\xi-\int_0^T \int_{\Omega_F^\bfeta(t)} \nabla\rho^\gamma \cdot \boldsymbol\xi \nonumber\\
    &&= \int_0^T \int_{\bfeta(t,I_\bfeta(t))}(\mathbb{S}(\vu)\bb{n}^\bfeta)\cdot \boldsymbol\xi dS^\bfeta + \underbrace{\int_0^T\int_{\partial\Omega\setminus \bfeta(t,C_\bfeta(t))}(\mathbb{S}(\vu)\bb{n})\cdot \boldsymbol\xi dS}_{=0} \nonumber\\
    &&\quad- \int_0^T \int_{\bfeta(I_\bfeta(t))}\rho^\gamma\bb{n}^\bfeta\cdot \boldsymbol\xi dS^\bfeta- \underbrace{\int_0^T\int_{\partial\Omega\setminus \bfeta(C_\bfeta)}\rho^\gamma\bb{n}\cdot \boldsymbol\xi dS}_{=0} \nonumber\\ 
    &&\quad -\int_0^T \int_{\Omega_F^\bfeta(t)} \mathbb{S}(\nabla\bb{u}): \nabla \boldsymbol\xi +\int_0^T \int_{\Omega_F^\bfeta(t)}\rho^\gamma(\nabla \cdot \boldsymbol\xi)\nonumber\\
    &&=\int_0^T \int_{I_\bfeta(t)} \det \nabla\bfeta\left(\left [\mathbb{T}(\vu,p)\circ\bfeta\right ]\nabla\bfeta^{-\tau}\bb{n}\right)\cdot \boldsymbol\xi\circ\bfeta dS -\int_0^T \int_{\Omega_F^\bfeta(t)} \mathbb{S}(\nabla\bb{u}): \nabla \boldsymbol\xi +\int_0^T \int_{\Omega_F^\bfeta(t)}\rho^\gamma(\nabla \cdot \boldsymbol\xi), \nonumber \\ &&\label{fl:calc} 
\end{eqnarray}
where the last term is obtained by a coordinate transformation by composing with $\bfeta^{-1}$. Next, multiplying the structure momentum equation with $\boldsymbol\phi:=\boldsymbol\xi \circ \bfeta$ and integrating over $(0,T)\times \Omega_S$ gives us
\begin{eqnarray}
    &&\int_0^T \int_{\Omega_S} \partial_t^2 \bfeta \boldsymbol\phi - \int_0^T \int_{\Omega_S} \nabla \cdot \boldsymbol\sigma (\bfeta,\partial_t\bfeta)\cdot\boldsymbol\phi \nonumber \\
    &&= - \int_0^T \int_{\Omega_S}\partial_t \bfeta\cdot \partial_t \boldsymbol\phi +\int_{\Omega_S} \bb{v}_0 \cdot\boldsymbol\phi(0,\cdot) - \int_0^T \int_{\partial \Omega_S} (\boldsymbol\sigma (\bfeta,\partial_t\bfeta)\bb{n})\cdot \boldsymbol\phi+ \underbrace{\int_0^T \int_{\Omega_S}\boldsymbol\sigma (\bfeta,\partial_t\bfeta):\nabla\boldsymbol\phi}_{=\int_0^T\langle DE(\bfeta),\boldsymbol\phi \rangle+ \int_0^T\langle D_2R(\bfeta,\partial_t \bfeta), \boldsymbol\phi \rangle}. \nonumber \\
    \label{str:calc}
\end{eqnarray}
by $\eqref{2nd:order:bnd}$. Noticing that
\begin{eqnarray*}
    &&\int_{\partial \Omega_S} (\boldsymbol\sigma (\bfeta,\partial_t\bfeta)\bb{n})\cdot \boldsymbol\phi\\
    &&= \int_{I_\bfeta(t)} (\boldsymbol\sigma (\bfeta,\partial_t\bfeta)\bb{n})\cdot \boldsymbol\phi dS + \int_{N_\bfeta(t)} (\boldsymbol\sigma (\bfeta,\partial_t\bfeta)\bb{n})\cdot \boldsymbol\phi dS+ \underbrace{\int_{C_\bfeta(t)} (\boldsymbol\sigma (\bfeta,\partial_t\bfeta)\bb{n})\cdot \boldsymbol\phi dS}_{=0},
\end{eqnarray*}
where the second term on RHS also vanishes due to Lemma $\ref{lemma:contact1}$ Claim 2 and the structure-structure coupling $\eqref{dynca}$, and that
\begin{eqnarray*}
    \int_{I_\bfeta} (\boldsymbol\sigma (\bfeta,\partial_t\bfeta)\bb{n})\cdot \boldsymbol\phi dS = \int_{I_\bfeta} \det \nabla\bfeta\left [\mathbb{T}(\vu,p)\circ\bfeta\right ]\nabla\bfeta^{-\tau}\bb{n} dS ,
\end{eqnarray*}
by the dynamic fluid-structure coupling $\eqref{dync}$, one can use the identities $\eqref{fl:calc0}$, $\eqref{fl:calc}$ and $\eqref{str:calc}$ in $\eqref{fl}$ to obtain the coupled momentum equation $\eqref{momeqweak}$.

\bigskip

Next, the renormalized continuity equation $\eqref{reconteqweak}$ in obtained in a similar fashion, i.e.\ we multiply $\eqref{conteq}$ with $(B(\rho)+\rho B'(\rho))\varphi$, then integrate over $(0,T)\times \Omega_F^\bfeta(t)$ and use the integration by parts and Reynolds transport theorem $\eqref{reynolds:omega:f}$.

\bigskip 

Finally, the energy (in)equality is obtained by choosing 
\begin{eqnarray*}
    \boldsymbol\xi =       \begin{cases}        \bb{u},& \quad \text{ on } (0,T)\times\Omega_F^\bfeta(t),\\
        \partial_t \bfeta \circ \bfeta^{-1},&\quad  \text{ on } (0,T)\times\Omega_S^\bfeta(t),
    \end{cases}
\end{eqnarray*}
in $\eqref{momeqweak}$, then $B(\rho)=\frac{1}{\gamma-1}\rho^\gamma,~ b(\rho) = \rho^\gamma$ and $\varphi=1$ in $\eqref{reconteqweak}$, and $B(\rho)=1,~ b(\rho)=0$ and $\varphi = -\frac{1}{2}|\bb{u}|^2$ and then summing up these 3 identities.

\subsection{Consistency}
Let us now assume that $({\bfeta },\rho,\bb{u})$ is a weak solution in the sense of Definition $\eqref{weaksol}$ which is smooth and let $\pi\in L_{w^*}^\infty(0,T; \mathcal{M}^+(\partial N_\bfeta(t)))$ be the corresponding defect measure. Now, by choosing test functions $\boldsymbol\xi$ with compact supports in $(0,T)\times\Omega_F^\bfeta(t)$ and $(0,T)\times\Omega_S^\bfeta(t)$, one directly recovers the momentum equations $\eqref{momeq}$ and $\eqref{elasticity}$. The fluid-structure and structure-structure kinematic coupling conditions $\eqref{kinc}$ and $\eqref{kinca}$ are automatically satisfied by definition. By similar calculation as above, one can conclude that 
\begin{eqnarray}
    &&\int_0^T \int_{I_\bfeta(t)} \det \nabla\bfeta\left(\left [\mathbb{T}(\vu,p)\circ\bfeta\right ]\nabla\bfeta^{-\tau}\bb{n}\right)\cdot \boldsymbol\phi dS+\int_0^T \langle \pi,\nabla \cdot \boldsymbol\xi \rangle_{[\mathcal{M}^+,C](\partial I_\bfeta(t))}\nonumber\\
    &&\quad-  \int_0^T\int_{I_\bfeta(t)} (\boldsymbol\sigma (\bfeta,\partial_t\bfeta)\bb{n})\cdot \boldsymbol\phi -   \int_0^T\int_{N_\bfeta(t)} (\boldsymbol\sigma (\bfeta,\partial_t\bfeta)\bb{n})\cdot \boldsymbol\phi =0 \label{test:eq}
\end{eqnarray}
for any $\boldsymbol\xi \in C_0^\infty([0,T)\times \overline{\Omega})$, where $\boldsymbol\phi = \boldsymbol\xi\circ\bfeta$. Now, for every $t_0\in [0,T]$ and $x_0\in I_\bfeta(t_0)$, there is an interval $I\ni t_0$ and a ball $B=B(\bfeta(t_0,x_0),r)$, for some $r>0$, such that for all $t\in I$, one has $B\cap \partial\Omega_F^\bfeta(t)\subset \bfeta(t,I_\bfeta(t))$. Then, we define a sequence of smooth test functions $\psi_n$ supported on $I\times B$ so that $\psi_n \to \delta_{(t_0,x_0)}$, so by testing $\eqref{test:eq}$ with $\psi_n \boldsymbol\xi$ and letting $n\to \infty$, one has that $\left(\det \nabla\bfeta\left(\left [\mathbb{T}(\vu,p)\circ\bfeta\right ]\nabla\bfeta^{-\tau}\bb{n}\right)\cdot \boldsymbol\phi\right)(t_0,x_0) = \left((\boldsymbol\sigma (\bfeta,\partial_t\bfeta)\bb{n})\cdot \boldsymbol\phi \right)(t_0,x_0)$. Since $(t_0,x_0)$ and $\boldsymbol\xi$ were arbitrary, one obtains that the fluid-structure dynamic condition $\det \nabla\bfeta\left(\left [\mathbb{T}(\vu,p)\circ\bfeta\right ]\nabla\bfeta^{-\tau}\bb{n}\right) = \boldsymbol\sigma (\bfeta,\partial_t\bfeta)\bb{n}$ holds on $(0,T)\times I_\bfeta(t)$, and by continuity on $[0,T]\times\overline{I_\bfeta(t)}$. By a similar approach, one can also conclude that the structure-structure dynamic condition $\eqref{dynca}$ holds on $[0,T]\times N_\bfeta(t)$ (here the key point is that due to the coupling $\boldsymbol\phi = \boldsymbol\xi\circ\bfeta$ on $\Omega_S$, one has that $\boldsymbol\phi(t,x) = \boldsymbol\phi(t,y)$ whenever $\bfeta(t,x)=\bfeta(t,y)$), so one has that all $(\bfeta,\rho,\bb{u})$ satisfy $\eqref{elasticity}-\eqref{initialdata}$ pointwise. This implies that we can show that the energy identity (without the defect measure) holds
\begin{eqnarray*}
    &&\int_{\Omega_F^{\bfeta}(t)} \Big( \frac{1}{2} \rho |\bb{u}|^2 + \frac{\rho^\gamma}{\gamma-1}   \Big)(t)+ \int_0^t\int_{\Omega_F^{\bfeta}(\tau)} (\mathbb{S}(\nabla \mathbf{u}):\nabla \mathbf{u})(\tau) ~d\tau \nonumber \\
    &&\quad +  \frac{1}{2}\int_{\Omega_S} |\partial_t {\bfeta }|^2(t) + E(\bfeta(t)) +\int_0^t 2R(\bfeta,\partial_t \bfeta)(\tau)~d\tau \nonumber \\
    &&= \int_{\Omega_F^{\bfeta_0}} \Big( \frac{1}{2\rho_0} |(\rho\bb{u})_0|^2 + \frac{\rho_0^\gamma}{\gamma-1}   \Big) + \int_{\Omega_S} \frac{1}{2}|\bb{v}_0|^2 + E(\bfeta_0), 
    \end{eqnarray*}
for all $t\in (0,T]$, which compared to the energy inequality $\eqref{enineq}$, by taking into consideration that $\pi \geq 0$, implies that $\pi \equiv 0$.

\section{Construction of approximate solutions}\label{sec:app:constr}
The goal of this section is to obtain approximate solutions to our problem which depend on given parameters $h,\varsigma,\varepsilon>0$, where $h = T/N$ for $N\in \mathbb{N}$. As these parameters vanish (in certain order), we will prove that the approximations converge to a weak solution in sense of Definition $\ref{weaksol}$. \\

\subsection{Time-stepping decoupling approximation scheme}
Before we start, let us point out that for our scheme to work, we must avoid self-contact or contact with the rigid boundary of the elastic body initially, at least for $\varepsilon>0$, so the initial contact can be recovered as $\varepsilon\to 0$. The key idea here is to use the bulk nature of the solid and to ``pull inside'' using the reference configuration. For this we have to construct a smooth diffeomorphism $\Phi: \Omega_S \to \Omega_S$ which mostly is equal to the identity, but maps the contact set $C(\bfeta_0) \cup N(\bfeta_0)$ to the interior of $\Omega_S$. A simple example would be
\begin{align*}
     \Phi_{\varepsilon}(x) := x- \varepsilon \boldsymbol\nu (x) \psi(x),
\end{align*}
where $\boldsymbol\nu :\overline{\Omega_S} \to B_1(0)$ is a sufficiently smooth continuation of the unit normal and $\psi(x): \overline{\Omega_S} \to [0,1]$ is a cut-off equal to one on $C(\bfeta_0)\cup N(\bfeta_0)$. Since we are dealing with a fixed, smooth enough domain $\Omega_S$, for $\varepsilon$ small enough, then $\Phi_{\varepsilon}$ is injective and has a uniformly positive Jacobian. With this, we can then define
\begin{align*}
    \bfeta_{0,\varepsilon}(x) := \bfeta_0 \circ \Phi_{\varepsilon}(x)
\end{align*}
which itself has the same regularity as $\bfeta_0$, satisfies $\det \nabla \bfeta_{\varepsilon} = \det \nabla \bfeta_0 \det \nabla \Phi_\varepsilon > c > 0$ for some $c$ and per construction $C(\bfeta_{0,\varepsilon}) = N(\bfeta_{0,\varepsilon}) = \emptyset$. Additionally we have $\bfeta_{0,\varepsilon} \to \bfeta_0$ in all relevant norms. Note that this approximate data can then be further regularized, e.g.\ by a standard convolution argument, without making it inadmissible by causing overlap. For more details also compare Step 1 of the proof of Theorem 2.4 in \cite{cesikInertialEvolutionNonlinear2022}.\\

Next, we define the contact-penalization term\footnote{Contact penalization functionals are a standard tool, in particular in the numerics of solids (see e.g.\ \cite{kromerGlobalInjectivitySecondgradient2019}). Note however that commonly one penalizes overlap of two different parts of the solid, while we instead have to resort to penalizing already a short distance before contact, in order to preserve the fluid domain.} given by
\begin{align*}
    K_\varepsilon(\bfeta) := \int_{\Omega^S} \kappa_\varepsilon(\operatorname{dist}(\bfeta,\partial \Omega)) + \iint_{\Omega^S \times \Omega^S \setminus \{|x-y| < \sqrt{\varepsilon}\}} \kappa_\varepsilon(|\bfeta(x)-\bfeta(y)|)
\end{align*}
with $\kappa_\varepsilon(r) = \kappa(r/\varepsilon)$ and $\kappa$ such that $\kappa(r) :=  +\infty$ for $r\leq 0$, $\kappa(r) = 0$ for $r\geq 1$ and convex in between with $\lim_{r\to 0}\kappa(r)=+\infty$ and $\lim_{r\to1}\kappa'(r) = 0$. Also we assume that there is a constant $c$ such that $|\kappa'(r) r | \leq c(\kappa(r)+1)$ for all $r>0$. Note that the last translates into
\begin{align} \label{eq:kappaGrowth}
 |\kappa_\varepsilon'(r) r| \leq c\varepsilon (\kappa_\varepsilon(r)+1).
\end{align}
A canonical example of such a function would be $\kappa(r) := \frac{1}{r} +r -2$ for $r\in (0,1)$ and $\kappa(r)=0$ for $r\geq 1$. Additionally, the following regularisation of elastic and viscoelastic energy terms is introduced:
\begin{align*}
 E_\varepsilon(\bfeta) := E(\bfeta)+ \varepsilon^{a_0}\norm[W^{k_0,2}]\bfeta^2, \quad  R_\varepsilon(\bfeta,\boldsymbol b) := R(\bfeta_k,\boldsymbol b) + \varepsilon\norm[W^{k_0,2}]{\boldsymbol b}^2,
\end{align*}
and the approximate stress tensor $\boldsymbol\sigma_\varepsilon$ is defined as
\begin{eqnarray*}
    \nabla\cdot \boldsymbol\sigma_\varepsilon:= -DE_\varepsilon
    (\bfeta)-D_2R_\varepsilon(\bfeta,\partial_t \bfeta), \quad \text{ in } \mathcal{D}'(Q_{S,T})
\end{eqnarray*}
and the exponent $a_0$ is needed for convergence later. Finally, define the time delay operator $\tau_h$ as
\begin{eqnarray*}
   \tau_h f(t):= f(t-h). 
\end{eqnarray*}
\noindent
\underline{\textbf{The structure sub-problem (SSP)}:} \\
By induction on $0\leq n\leq N-1$, assume that:
\begin{itemize}
    \item[] \textbf{Case} $n=0$: $\bfeta^0(0,\cdot):=\bfeta_{0,\varepsilon}$,~ $\partial_t\bfeta^0(0,\cdot): = \bb{v}_0$;
    \item[] \textbf{Case} $n\geq 1$: the solution $\bfeta^{n}$ of $(SSP)$ and the solution $(\rho^n,\bb{u}^n)$ of $(FSP)$ (defined below) are already obtained.
\end{itemize}

\noindent Find $\bfeta^{n+1}$ so that:

\begin{enumerate}
\item $\bfeta^{n+1} \in L^\infty(nh,(n+1)h; W^{k_0,2}(\Omega_S)),~ \partial_t \bfeta^{n+1} \in L^2(nh,(n+1)h;W^{k_0,2}(\Omega_S))$;

\item The structure equation
\begin{eqnarray}
   &&\int_{nh}^{(n+1)h}\int_{\Omega_S} \frac{\partial_t \bfeta^{n+1}-\tau_h\bb{U}^n}h\cdot \boldsymbol\phi+\int_{nh}^{(n+1)h}\langle DE_\varepsilon(\bfeta^{n+1}),\boldsymbol\phi \rangle\nonumber\\
   &&\quad+\int_{nh}^{(n+1)h}\langle D_2R_\varepsilon(\bfeta^{n+1},\partial_t \bfeta^{n+1}), \boldsymbol\phi \rangle + \int_{nh}^{(n+1)h} \langle DK_\varepsilon (\bfeta^{n+1}),\boldsymbol\phi \rangle=0 \label{momeqSSP}
\end{eqnarray}
holds for all $\boldsymbol\phi \in C^\infty( [nh,(n+1)h]\times \overline{\Omega_S})$;
\item Structure energy inequality
\begin{eqnarray}
    &&E_\varepsilon(\bfeta^{n+1}(t)) + K_\varepsilon(\bfeta^{n+1}(t))+2\int_{nh}^t R_\varepsilon( \bfeta^{n+1},\partial_t \bfeta^{n+1}) \nonumber\\
    &&\quad+ \frac{1}{2h}\int_{nh}^t\int_{\Omega_S}\Big( | \partial_t \bfeta^{n+1}-\tau_h\bb{U}^{n}|^2  +| \partial_t \bfeta^{n+1}|^2\Big) \nonumber\\
    && \leq  E_\varepsilon(\bfeta^{n+1}(nh))+K_\varepsilon(\bfeta^{n+1}(nh))+ \frac{1}{2h}\int_{nh}^t\int_{\Omega_S} | \tau_h\bb{U}^{n}|^2(nh)  \label{SSPen}
\end{eqnarray}
for all $t\in(nh,(n+1)h]$.

\end{enumerate}

${}$ \\ ${}$\\

\noindent
We extend the fluid domain to the entire $\Omega$ by extending the initial data and the fluid dissipation. More precisely, we take sufficiently regular $\rho_{0,\varepsilon}$ and $(\rho\bb{u})_{0,\varepsilon}$ such that
\begin{eqnarray*}
    &&\rho_{0,\varepsilon}\geq 0,~~ \rho_{0,\varepsilon}|_{\mathbb{R}^3\setminus \Omega^{\bfeta_{0,\varepsilon}}}=0, ~~\int_\Omega\rho_{0,\varepsilon}^\gamma+\varepsilon\rho_{0,\varepsilon}^{\beta}\leq c,\\
    &&(\rho\bb{u})_{0,\varepsilon}=0 \text{ wherever } \rho_{0,\varepsilon}=0,\\
    &&\rho_{0,\varepsilon}\to \rho_0 \text{ in }L^{\gamma}(\Omega^{\bfeta_0}) \text{ and } (\rho\bb{u})_{0,\varepsilon}\to (\rho\bb{u})_0 \text{ in } L^{\frac{2\gamma}{\gamma+1}}(\Omega^{\bfeta_0}), \text{ as } \varepsilon \to 0,
\end{eqnarray*}
where $\beta>0$ is a sufficiently large number, and for given $\varepsilon>0$ and $\bfeta$, we define
\begin{eqnarray*}
      \mathbb{S}_{\varepsilon, \bfeta}(\nabla\bb{u}):=\mathbb{S}(\nabla\bb{u})\times\begin{cases}
    1, \quad \text{on } \Omega^\bfeta, \\
    \varepsilon,\quad \text{elsewhere}.\end{cases}
\end{eqnarray*}
We now introduce:\\

\noindent
\underline{\textbf{The fluid sub-problem (FSP)}:} \\
By induction on $0\leq n\leq N-1$, assume that:
\begin{itemize}
    \item[] \textbf{Case} $n=0$: $\rho^0(0,\cdot):=\rho_{0,\varepsilon}(\cdot)$,~ $(\rho\bb{u})^0(0,\cdot): = (\rho\bb{u})_{0,\varepsilon}(\cdot)$;
    \item[] \textbf{Case} $n\geq 1$: the solution $(\rho^n,\bb{u}^n)$ of $(FSP)$ and the solution $\bfeta^{n+1}$ of $(SSP)$ are already obtained.
\end{itemize}
Find $(\rho^{n+1}, \bb{u}^{n+1})$ so that:
\begin{enumerate}
    \item $\rho^{n+1} \in L^\infty(nh,(n+1)h; L^\beta(\Omega))$, $\bb{u}^{n+1} \in L^2(nh,(n+1)h; H_0^1(\Omega))$;
    \item $\rho^{n+1}(nh,\cdot)= \rho^n(nh,\cdot)$ and $(\rho\bb{u})^{n+1}(nh,\cdot) = (\rho\bb{u})^n(nh,\cdot)$ in weakly continuous sense in time;
    \item The following damped continuity equation
\begin{align*}
    &\int_{nh}^{(n+1)h} \bint_\Omega \rho^{n+1} ( \partial_t \varphi +\bb{u}^{n+1}\cdot \nabla \varphi) - \int_{nh}^{(n+1)h} \bint_{\Omega_S^{\bfeta^{n+1}}} (\rho^{n+1})^{2} \varphi - \varsigma\int_{nh}^{(n+1)h} \bint_\Omega  \nabla\rho^{n+1}\cdot\nabla\varphi  \nonumber \\
    &=- \int_{nh}^{(n+1)h}\frac{d}{dt}\int_{\Omega} \rho^{n+1} \varphi  
\end{align*}
holds for all $\varphi \in C^\infty([nh ,(n+1)h]\times \mathbb{R}^3)$;
\item The following momentum equation
\begin{align}
    &\int_{nh}^{(n+1)h} \bint_\Omega \rho^{n+1} \bb{u}^{n+1} \cdot\partial_t \boldsymbol\xi + \int_{nh}^{(n+1)h} \bint_\Omega (\rho^{n+1} \bb{u}^{n+1} \otimes \bb{u}^{n+1}):\nabla\boldsymbol\xi\nonumber\\
    & \quad +\int_{nh}^{(n+1)h} \bint_\Omega \big((\rho^{n+1})^\gamma + \varepsilon (\rho^{n+1})^\beta \big)\nabla \cdot \boldsymbol\xi-  \int_{nh}^{(n+1)h} \bint_\Omega   \mathbb{S}_{\varepsilon, \bfeta^{n+1}}(\nabla\bb{u}^{n+1}): \nabla \boldsymbol\xi \nonumber \\
    &\quad- \frac{1}2 \int_{nh}^{(n+1)h}\int_{\Omega_S^{\bfeta^{n+1}}(t)} (\rho^{n+1})^2 \bb{u}^{n+1} \cdot \boldsymbol\xi-\varsigma\int_{nh}^{(n+1)h} \int_{\Omega}(\nabla\rho^{n+1}\cdot\nabla\bb{u}^{n+1})\cdot\boldsymbol\xi\nonumber\\
    & \quad-
    \int_{nh}^{(n+1)h}  \bint_{\Omega_S}\ddfrac{\bb{U}^{n+1}-\partial_t \bfeta^{n+1}}{h} \cdot\boldsymbol\phi  \nonumber\\
    &= \int_{\Omega}\rho^{n+1}\mathbf{u}^{n+1}\cdot\boldsymbol\xi~\Big|_{nh}^{(n+1)h}\label{momeqFSP}
\end{align}
holds for all $\boldsymbol\xi\in C^\infty([nh,(n+1)h]\times \Omega)$, where 
\begin{eqnarray*}
    \bb{U}^{n+1}:=\bb{u}^{n+1}\circ\bfeta^{n+1}, \quad \boldsymbol\phi:=\boldsymbol\xi\circ\bfeta^{n+1};
\end{eqnarray*}
\item The fluid energy inequality 
\begin{eqnarray}
    &&\int_{\Omega} \Big( \frac{1}{2} \rho^{n+1} |\bb{u}^{n+1}|^2 + \frac{(\rho^{n+1})^\gamma}{\gamma-1}+\varepsilon \frac{(\rho^{n+1})^\beta}{\beta-1}\Big)(t)  + \int_{nh}^t\int_{\Omega} \mathbb{S}_{\varepsilon,\bfeta^{n+1}}(\nabla \mathbf{u}^{n+1}):\nabla \mathbf{u}\nonumber \\
    &&\quad+\int_{nh}^t \int_{\Omega_S^{\bfeta^{n+1}}(t)}\left( \frac{1}{\gamma-1}(\rho^{n+1})^{\gamma+1}+\frac{\varepsilon}{\beta-1}(\rho^{n+1})^{\beta+1}\right) \nonumber \\
    &&\quad+ \varsigma  \int_{nh}^t |\nabla \rho^{n+1}|^2\left( \gamma(\rho^{n+1})^{\gamma-2}+ \varepsilon\beta (\rho^{n+1})^{\beta-2} \right)\nonumber \\
    &&\quad+  \frac{1}{2h}\int_{nh}^t\int_{\Omega_S}\Big( |\bb{U}^{n+1} - \partial_t \bfeta^{n+1}|^2  +| \bb{U}^{n+1}|^2\Big) \nonumber \\
    &&\leq \int_\Omega \Big(  \rho^n|\bb{u}^n|^2 +  \frac{(\rho^n)^\gamma}{\gamma-1}  \Big)(nh) + \frac{1}{2h}\int_{\Omega_S} | \partial_t \bfeta^{n+1}|^2(nh) \label{FSPen}
\end{eqnarray}
holds for all $t\in (nh,(n+1)h]$.
\end{enumerate}

\subsection{Discussion about the scheme}
 The fluid equations are extended to the entire domain $\Omega$ and the fluid is allowed to pass through the fluid-structure interface and occupy the structure domain. This way, we obtain a problem on a fixed domain. This idea combines decoupling, penalization and domain extension and was first done in \cite{heatFSI} in the context of interaction between a heat-conducting fluid and a thermoelastic shell. The penalization and domain extension approach for compressible viscous fluids on moving domains without the structure comes from \cite{feireisl2011convergence}. Next, the mentioned decoupling method in fluid-structure interaction which splits the coupled momentum equation in the structure acceleration term was first done in \cite{SubBorARMA}, where the fluid is incompressible and the elastic solid is a beam. The constructed scheme was stationary both for fluid and structure, and they are solved through a time-marching scheme. Later, in \cite{srdjan1}, this scheme was adjusted to the case where the structure is a nonlinear plate, and a hybrid scheme was constructed which is stationary for the fluid and time-continuous for the plate. This way, the energy for the plate (which is not discretizable in time) is properly conserved for the approximate solution. Finally, in \cite{trwa3}, the interaction between a compressible viscous fluid and a nonlinear thermoelastic plate is studied, and the decoupling scheme is fully time-continuous, which allowed the standard theory for solving both sub-problems to be used. 

The solutions to the solid equations in turn are constructed using a method related to that employed in \cite{benevsova2020variational} where the effects of inertia are initially dealt with on a different time-scale than those resulting from energy and dissipation. This allows us to begin by treating the problem as a gradient flow with forces. For this type of problems we can then employ the variational method of minimizing movements. In contrast to other approaches relying on linearization or finite dimensional approximations, this allows us to directly work with nonlinearities, both in the equation as well as in that the space of admissible deformations which for this kind of problem is generally not closed under any form of linear interpolation.

We deviate from \cite{benevsova2020variational} in two key aspects though. Firstly, in favor of the more simple approach described above, we choose to not construct an approximation to the fluid equations using the same method (compare \cite{breit2021compressible} to see how this can be done). Secondly, while the approach can in fact deal with ``hard'' collisions of solids (see \cite{cesikInertialEvolutionNonlinear2022}), we instead employ a soft penalization potential to avoid collisions in our approximation, as is well established, in particular in numerics (see e.g.\ \cite{kromerGlobalInjectivitySecondgradient2019} for a modern approach). This allows us to have regular approximate fluid domains, which makes working with them easier. To the best of our knowledge this is  the first time this type of approach has been applied to the fluid-structure interactions.

\noindent
Let us now point out the roles of the added terms in the equations. 
        \begin{enumerate}
            \item The contact-penalization $K_\varepsilon$ is added to ensure that the deformations satisfy the Ciarlet-Ne\v cas condition;
            \item The approximate energy functionals $E_\varepsilon$ and $R_\varepsilon$ provide stronger regularity which ensures that we can use $\partial_t \bfeta$ as a test function in the structure sub-problem and derive the energy inequality;
            \item The term $\smash{\chi_{\Omega_S^{\bfeta^{n+1}}} (\rho^{n+1})^2}$ in the continuity equation ensures that pressure is bounded in $L_{t,x}^p$ for $p>1$ on the structure domain $\smash{\Omega_S^{\bfeta^{n+1}}}$, since we cannot obtain the improved pressure estimates inside the structure domain uniformly w.r.t. parameter $h$, due to structure terms. Then, the term $\smash{-\frac{1}2 \chi_{\Omega_S^{\bfeta^{n+1}}} (\rho^{n+1})^{2} \bb{u}^{n+1}}$ in the momentum equation is added to correct the energy. Both of these terms will converge to zero weakly as $h,\varsigma\to 0$, due to Lemma $\ref{vanish:density}$. Since the solution to the fluid sub-problem is obtained as a limit of a system in a finite basis, the term $-\varsigma \Delta \rho^{n+1}$ is added (or rather kept) to ensure that we can identify the mentioned nonlinear terms, while once again $-\varsigma \nabla\rho^{n+1}\cdot \nabla\bb{u}^{n+1}$ is added to the fluid momentum equation to correct the energy;
            \item The artificial pressure $\varepsilon(\rho^{n+1})^\beta$ is standard in the theory of compressible Navier-Stokes equations. It is added to ensure that effective viscous flux converges properly as artificial density damping vanishes $-\varsigma \Delta\rho\to 0$;
            \item The terms $\chi_{\Omega_S}\frac{\bb{U}^{n+1}-\partial_t \bfeta^{n+1}}{h}$ and $\frac{\partial_t \bfeta^{n+1}-\tau_h\bb{U}^n}h$ allow us to decouple the coupled momentum equation $\eqref{momeqweak}$ and thus decouple the problem into fluid and structure sub-problems. Moreover, they act as a two-sided penalization, and force the limiting functions $\partial_t \bfeta$ and $\bb{u}\circ \bfeta$ to become equal on the solid domain as $\varsigma,h\to 0$, which will ensure the kinematic coupling condition. The coupled momentum equation will be recovered by summing up $\eqref{momeqSSP}$ and $\eqref{momeqFSP}$ with a common test function, so the mentioned terms, when summed up, will give
            $\frac{\bb{U}^{n+1}-\tau_h \bb{U}^n}h$, a term which converges to $\partial_t \bb{U} = \partial_t^2 \bfeta$ -- the acceleration of the solid, as $h\to 0$.
        \end{enumerate}

\subsection{Solving the fluid sub-problem}
The solution of the fluid sub-problem can be obtained as in standard theory of compressible viscous fluids. The idea is to span the fluid velocity in a finite Galerkin basis on $\Omega$. The solution of such problem is obtained by Schauder's fixed point theorem, while the estimates are obtained by testing the continuity equation with $\frac1{\gamma-1}\rho^\gamma+\varepsilon \frac1{\beta-1}\rho^{\beta-1}$ and $-\frac12|\bb{u}|^2$ and the momentum equation with $\bb{u}$ and summing these 3 identities together. Afterwards, the limit in the number of basis functions is passed to, so the solution is obtained. For more details, the reader is referred to \cite[Section 7]{NovStr}.

\subsection{Solving the structure sub-problem}
We introduce a time-discretized version. First, let us split the time interval $[nh,(n+1)h]$ into $M$ time intervals of length $\Delta t$, that is $[nh,nh+\Delta t], [nh+\Delta t, nh+2\Delta t],...,[nh+(M-1)\Delta t, (n+1)h]$. Finally, we discretize the function $\bb{U}^n$ in time and denote it with
\begin{eqnarray*}
    \bb{U}_k^n := \frac{1}{\Delta t }\int_{nh+k \Delta t}^{nh+(k+1) \Delta t} \bb U^n dt, \quad k = 0,1,2,...,M-1.
\end{eqnarray*}
For $1\leq k\leq M$, we define the problem as
\begin{eqnarray*}
    \frac1h\int_{\Omega_S}\left( \frac{\bfeta_k^{n+1}-\bfeta_{k-1}^{n+1}}{\Delta t} -\bb{U}_k^n\right)\cdot\boldsymbol\phi+\langle DE_\varepsilon(\bfeta_k^{n+1}), \boldsymbol\phi \rangle+\langle D_2R_\varepsilon(\bfeta_k^{n+1},\partial_t \bfeta_k^{n+1}),\boldsymbol\phi \rangle + \langle DK_\varepsilon(\bfeta_k^{n+1}),\boldsymbol\phi \rangle=0
\end{eqnarray*}
for all $\boldsymbol\phi\in C^\infty(\Omega_S)$, where $\bfeta_0^{n+1} := \bfeta^{n}(nh)$. \\

The approach we use for this part of the proof is quite similar to \cite[Theorem 3.5]{benevsova2020variational}.
However for the sake of completeness and because of the additional collision potential $K_\varepsilon$ we will still carry out the details. The idea is that if $\boldsymbol U_k^n$ is treated as given data, then at this point the problem has the structure of a gradient flow with forces. It can thus be approached by a minimizing movements scheme. As $n$ will be fixed in this section and apart from initial and right hand side data, all quantities have the same index $n+1$, we will drop this superscript everywhere where it is not necessary.

Starting with $\bfeta_0^{n+1}:= \bfeta^n(nh)$ we begin by considering the iterative problem of minimizing
\begin{align*}
 \bfeta \mapsto E_\varepsilon(\bfeta) + K_\varepsilon(\bfeta) + \Delta t R_\varepsilon\left(\bfeta_k, \frac{\boldsymbol \bfeta - \boldsymbol \bfeta_k}{\Delta t}\right) + \frac{\Delta t}{2h} \norm[L^2(\Omega_S)]{\frac{\boldsymbol \bfeta - \boldsymbol \bfeta_k}{\Delta t}-\bb U_k}^2
\end{align*}
among the set $\mathcal{E}^q\cap W^{k_0,2}(\Omega_S)$ to obtain $\bfeta_{k+1}$.
Here $\bfeta_k$ is the solution obtained in the previous $\Delta t$-step.

We first note that this problem has a minimizer as a consequence of the direct method. Specifically, since all terms are non-negative, the growth condition on $E$ gives us coercivity in $W^{k_0,2}$ and by our assumptions or respectively their specific form, all terms are weakly lower-semicontinuous in $\bfeta$ with respect to that space. Thus any minimizing sequence has a converging subsequence and a limit, which then has to be a minimizer (cmp.\ also \cite[Prop. 2.13]{benevsova2020variational} for details). However one should keep in mind that such a minimizer is not necessarily unique.

Let us now denote the resulting piecewise constant and piecewise affine approximations by
\begin{align*}
 \overline\bfeta_{\Delta t} (t) &:= \bfeta_{k+1}, &\text{ for }&t\in[nh+k\Delta t,nh+(k+1)\Delta t) ,\\
 \underline\bfeta_{\Delta t} (t) &:= \bfeta_{k}, &\text{ for }&t\in[nh+k\Delta t,nh+(k+1)\Delta t), \\
 \hat\bfeta_{\Delta t} (t) &:= \frac{t-nh-k\Delta t}{\Delta t} \bfeta_{k+1} +\frac{nh-(k+1)\Delta t-t}{\Delta t} \bfeta_k, &\text{ for }&t\in[nh+k\Delta t,nh+(k+1)\Delta t).
\end{align*}
Additionally since we know that $\bfeta_{k+1}$ is a minimizer, we can compare it with the choice $\bfeta_{k}$ in the minimization to obtain the initial estimate:
\begin{align*}
 & E_\varepsilon(\bfeta_{k+1}) + K_\varepsilon(\bfeta_{k+1}) + 2\Delta t R_\varepsilon\left(\bfeta_k, \frac{\boldsymbol \bfeta_{k+1} - \boldsymbol \bfeta_k}{\Delta t}\right) + \frac{\Delta t}{2h} \norm{\frac{\boldsymbol \bfeta_{k+1} - \boldsymbol \bfeta_k}{\Delta t}-\bb U_k}^2 \\
 &\leq E_\varepsilon(\bfeta_k) + K_\varepsilon(\boldsymbol \bfeta_k) + \frac{\Delta t}{2h} \norm{\bb U_k}^2.
\end{align*}
Summing up this estimate over $k$ and replacing the $\bfeta_k$ and the difference quotients by the respective approximations then results in
\begin{align*}
 &  E_\varepsilon(\overline\bfeta_{\Delta t} (t)) + K_\varepsilon(\overline\bfeta_{\Delta t} (t)) + 2\int_{nh}^t R_\varepsilon\left(\underline\bfeta_{\Delta t} (t), \partial_t \hat\bfeta_{\Delta t} (t) \right) + \frac{1}{2h} \norm{\partial_t \hat\bfeta_{\Delta t} (t)-\underline{\bb U}_{\Delta t}(t)}^2 dt \\
 &\leq E_\varepsilon(\bfeta_0^{n+1}) + K_\varepsilon(\boldsymbol \bfeta_0^{n+1}) + \int_{nh}^t \frac{1}{2h} \norm{\underline{\bb U}_{\Delta t}(t)}^2 dt
\end{align*}
for all $t \in [nh,(n+1)h]$ where $\underline{\bb U}_{\Delta t} (t) := \bb U_k$ for $t\in[nh+k{\Delta t},nh+(k+1){\Delta t})$ similar to before.

Per Jensen's inequality we additionally have $ \int_{nh}^t \norm{\underline{\bb U}_{\Delta t}(t)}^2 dt \leq \int_{nh}^t \norm{ \bb U^n(t)}^2 dt$. So in particular this gives us ${\Delta t}$-independent a-priori estimates in the form of 
\begin{align*}
  & \sup_{t\in [nh,(n+1)h]} E_\varepsilon(\overline\bfeta_{\Delta t} (t)) \leq C, & \quad \sup_{t\in [nh,(n+1)h]} K_\varepsilon(\overline\bfeta_{\Delta t} (t)) \leq C,\\ & \quad \int_{nh}^{(n+1)h} R_\varepsilon\left(\underline\bfeta_{\Delta t} (t), \partial_t \hat\bfeta_{\Delta t} (t) \right)dt \leq C, & \quad \int_{nh}^{(n+1)h} \frac{1}{2h} \norm{\partial_t \hat\bfeta_{\Delta t} (t)}^2 dt \leq C.
\end{align*}
Using the coercivity of $E_\varepsilon$, the fact that $\overline\bfeta_{\Delta t}$ and $\underline\bfeta_{\Delta t}$ only differ by a small shift in time and that $\hat\bfeta_{\Delta t}$ is their linear interpolation we then get that
\begin{align*}
 \overline\bfeta_{\Delta t},\underline\bfeta_{\Delta t},\hat\bfeta_{\Delta t} &\text{ are bounded in } L^\infty(nh,(n+1)h; W^{k_0,2}(\Omega;\mathbb{R}^n)), \\
 \hat\bfeta_{\Delta t} &\text{ is bounded in } W^{1,2}(nh,(n+1)h; W^{k_0,2}(\Omega;\mathbb{R}^n)).
\end{align*}

Using the classical Aubin-Lions Lemma and the Banach-Alaoglu theorem, this allows us to take a subsequence and a limit $\boldsymbol \bfeta^n$ such that
\begin{align*}
 \hat\bfeta_{\Delta t} \to \boldsymbol \bfeta^n \text{ in } C^0([nh,(n+1)h]; W^{k_0,2}(\Omega;\mathbb{R}^n)) \cap W^{1,2}(nh,(n+1)h; W^{k_0,2}(\Omega;\mathbb{R}^n)),
\end{align*}
as $\Delta t\to 0$. Additionally the closeness of the three interpolations then also implies
\begin{align*}
 \overline\bfeta_{\Delta t},\underline\bfeta_{\Delta t} \rightharpoonup^* \boldsymbol \bfeta^n &\text{ in } L^\infty(nh,(n+1)h; W^{k_0,2}(\Omega;\mathbb{R}^n)), \quad \text{ as }\Delta t\to 0.
\end{align*}

Next we go back to the initial minimization and note that as a minimizer $\bfeta_{k+1}$ fulfills the Euler-Lagrange equation
\begin{align*}
    \langle DE_\varepsilon(\bfeta_{k+1}), \boldsymbol\phi \rangle+ \langle DK_\varepsilon(\bfeta_{k+1}),\boldsymbol\phi \rangle + \left \langle D_2R_\varepsilon\left(\bfeta_k,\frac{\bfeta_{k+1}-\bfeta_{k}}{\Delta t} \right),\boldsymbol\phi \right\rangle + \frac1h\int_{\Omega_S}\left( \frac{\bfeta_{k+1}-\bfeta_{k}}{\Delta t} -\bb{U}_k\right)\cdot\boldsymbol\phi = 0
\end{align*}
for any $\boldsymbol \phi \in W^{k_0,2}(\Omega_S)$,
which in terms of the previous approximations can be restated as
\begin{align*}
    \langle DE_\varepsilon(\overline\bfeta_{\Delta t} (t)), \boldsymbol\phi \rangle+ \langle DK_\varepsilon(\overline\bfeta_{\Delta t} (t)),\boldsymbol\phi \rangle + \left \langle D_2R_\varepsilon\left(\underline\bfeta_{\Delta t},\partial_t \hat\bfeta_{\Delta t} (t) \right),\boldsymbol\phi \right\rangle + \frac1h\int_{\Omega_S}\left( \partial_t \hat\bfeta_{\Delta t} (t) -\underline{\bb U}_{\Delta t}(t)\right)\cdot\boldsymbol\phi = 0
\end{align*}
for almost all $t \in [nh,(n+1)h]$. Integrating in time and taking the limit ${\Delta t}\to 0$ then results in 
\begin{align*}
    & \int_{nh}^{(n+1)h} \langle DE_\varepsilon(\bfeta^{n+1} (t)), \boldsymbol\phi \rangle+ \langle DK_\varepsilon(\bfeta^{n+1} (t)),\boldsymbol\phi \rangle \\
    &\quad+ \left \langle D_2R_\varepsilon\left(\bfeta^{n+1},\partial_t \bfeta^{n+1} (t) \right),\boldsymbol\phi \right\rangle + \frac1h\int_{\Omega_S}\left( \partial_t \bfeta^{n+1} (t) -\bb U^n(t)\right)\cdot\boldsymbol\phi\, dt = 0.
\end{align*}

Finally we derive an energy inequality for this equation. For this we test with $\boldsymbol \phi = \partial_t \bfeta^{n+1}$ to obtain via the chain-rule
\begin{align*}
    0&= \int_{nh}^{(n+1)h} \langle DE_\varepsilon(\bfeta^{n+1} (t)), \partial_t \bfeta^{n+1} \rangle+ \langle DK_\varepsilon(\bfeta^{n+1} (t)),\partial_t \bfeta^{n+1}\rangle  \\
    &\quad+ \left \langle D_2R_\varepsilon\left(\bfeta^{n+1},\partial_t \bfeta^{n+1} (t) \right),\partial_t \bfeta^{n+1} \right\rangle + \frac1h\int_{\Omega_S}\left( \bfeta^{n+1} (t) -\bb U^n(t)\right)\cdot\partial_t \bfeta^{n+1}\, dt \\
    &= E_\varepsilon(\bfeta^{n+1} ((n+1)h)) - E_\varepsilon(\bfeta^{n+1} (nh)) +  K_\varepsilon(\bfeta^{n+1} ((n+1)h)) - K_\varepsilon(\bfeta^{n+1} (nh)) \\ 
    &\quad+ \int_{nh}^{(n+1)h} 2R_\varepsilon\left(\bfeta^{n+1},\partial_t \bfeta^{n+1} (t) \right) + \frac1h\int_{\Omega_S}\left( \bfeta^{n+1} (t) -\bb U^n(t)\right)\cdot\partial_t \bfeta^{n+1}\, dt,
\end{align*}
so by using the identity $2a(a-b) = a^2 +(a-b)^2 - b^2$, one obtains the (in)equality $\eqref{SSPen}$.

\subsection{Coupled back problem and uniform estimates}
Here, we sum up equations $\eqref{momeqSSP}$ and $\eqref{momeqFSP}$ with a common test function $\boldsymbol\phi$, then the energy inequalities $\eqref{SSPen}$ and $\eqref{FSPen}$ and finally sum over $n=0,...,N-1$. Denoting $f(t):=f^{n}(t)$ for $t\in [(n-1)h,nh)$, we obtain that the constructed approximate solutions $(\rho,\bb{u},\bfeta)$ satisfy:
\begin{mydef}\label{coupledbackproblem}
We say that $(\rho,\bb{u},\bfeta)$ is a weak solution to the regularized problem on an extended domain with decoupled velocities if:
\begin{enumerate}
    \item The following damped continuity equation is satisfied in the weak sense
\begin{eqnarray}\label{reconteqweakapp}
    \int_0^T \int_\Omega \rho ( \partial_t \varphi +\bb{u}\cdot \nabla \varphi) -  \int_{0}^{T} \bint_{\Omega_S^\bfeta(t)}\rho^{2} \varphi  - \varsigma \int_0^T \nabla \rho \cdot \nabla \varphi =\int_{\Omega_F^{\bfeta_0}}\rho_0 \varphi(0,\cdot)
\end{eqnarray}
for all $\varphi \in C_0^\infty([0,T)\times \mathbb{R}^3)$;
\item The coupled momentum equation
\begin{eqnarray}
    &&\int_{Q_T} \rho \bb{u} \cdot\partial_t \boldsymbol\xi + \int_{Q_T}(\rho \bb{u} \otimes \bb{u}):\nabla\boldsymbol\xi +\int_{Q_T}(\rho^\gamma+\varepsilon \rho^\beta )\nabla \cdot \boldsymbol\xi -  \int_{Q_T} \mathbb{S}_{\varepsilon,\bfeta}(\nabla\bb{u}): \nabla \boldsymbol\xi\nonumber\\
    &&\quad- \frac{1}2 \int_{0}^{T}\int_{\Omega_S^\bfeta(t)} \rho^{2} \bb{u} \cdot \boldsymbol\xi-\varsigma \int_0^T \int_{\Omega}(\nabla\rho\cdot\nabla\bb{u})\cdot\boldsymbol\xi-\int_{Q_{S,T}} \frac{\partial_t \bfeta  - \tau_h \partial_t {\bfeta }}h \cdot \boldsymbol\phi \nonumber\\
    &&\quad-\int_0^T\langle DE_\varepsilon(\bfeta),\boldsymbol\phi \rangle- \int_0^T\langle D_2R_\varepsilon(\bfeta,\partial_t \bfeta), \boldsymbol\phi \rangle - \int_0^T \langle DK_\varepsilon (\bfeta),\boldsymbol\phi \rangle \nonumber\\
    &&= \int_\Omega(\rho\mathbf{u})_{0,\varepsilon}\cdot\boldsymbol\xi(0,\cdot) +\int_{\Omega_S} \bb{v}_0\cdot \boldsymbol\phi(0,\cdot) \label{coupledmomapp}
\end{eqnarray}
holds for all $\boldsymbol\xi \in C_0^\infty([0,T)\times \overline{\Omega})$, where $ \boldsymbol\phi=\boldsymbol\xi\circ \bfeta$ on $Q_{S,T}$;
\item The following energy inequality
\begin{eqnarray*}
    &&\int_{\Omega} \Big( \frac{1}{2} \rho |\bb{u}|^2 + \frac{\rho^\gamma}{\gamma-1} \Big)(t)  + \int_{0}^t\int_{\Omega} \mathbb{S}_{\varepsilon,\bfeta}(\nabla \mathbf{u}):\nabla \mathbf{u}\\
    &&\quad+\int_{0}^T \int_{\Omega_S^\bfeta(t)} \left(\frac{1}{\gamma-1}\rho^{\gamma+1}  +\frac{\varepsilon}{\beta-1}\rho^{\beta+1}\right)+\varsigma \int_{0}^T |\nabla \rho|^2\left( \gamma\rho^{\gamma-2}+ \varepsilon\beta \rho^{\beta-2} \right) \\
    &&\quad+E_\varepsilon(\bfeta)(t)+\int_{0}^t 2 R_\varepsilon( \bfeta,\partial_t \bfeta)+K_\varepsilon(\bfeta)(t)\\
    &&\quad+\frac1{2h}\int_{nh}^t\int_{\Omega_S}| \bb{U}|^2+ \frac{1}{2h}\int_{0}^t\int_{\Omega_S}\Big(|\bb{U} - \partial_t \bfeta|^2+ | \partial_t \bfeta-\tau_h\bb{U}|^2 \Big) \nonumber\\
    &&\leq \int_\Omega \Big(  \rho_0|\bb{u}_0|^2 +  \frac{\rho_0^\gamma}{\gamma-1}  \Big) + \frac{1}{2h}\int_{\Omega_S} |\bb{v}_0|^2+ E_\varepsilon(\bfeta_0)
\end{eqnarray*}
for all $0\leq n \leq N-1$ and $t\in [nh,(n+1)h)$.
\end{enumerate}

\end{mydef}

\begin{figure}[h!]
    \centering
    \includegraphics[scale=0.25]{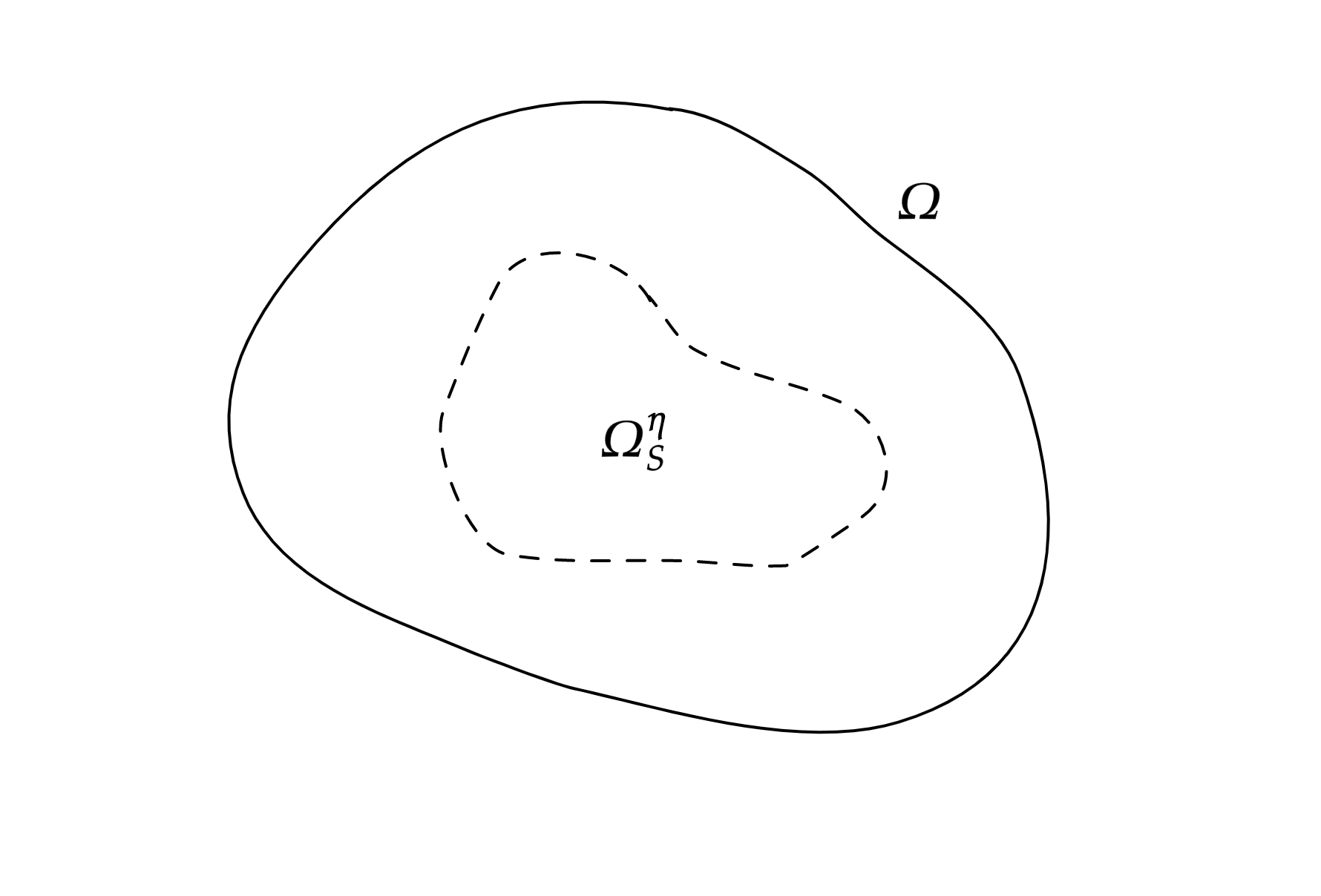}
    \caption{At this approximate level, the fluid is allowed to pass through the elastic solid, which is emphasized with a dashed line. After the limit $h\to 0$, the kinematic coupling is achieved and the fluid cannot pass through the interface.}
    \label{fig:my_label2}
\end{figure}

\begin{cor}
The above weak solution satisfies the following estimates:
\begin{eqnarray}
    &&\sqrt{\varsigma}||\nabla \rho||_{L^2((0,T)\times \Omega)} \leq C(\varepsilon), \label{nabla:rho:est}\\
    &&||\bb{u}||_{L^2(0,T; H^1(\Omega))} \leq C(\varepsilon),\label{u:H1:est} \\
    &&||\bb{u}||_{L^2(0,T; L^6(\Omega))} \leq C(\varepsilon),\nonumber\\
    &&||\rho \bb{u}||_{L^\infty(0,T; L^{\frac{2\gamma}{\gamma+1}}(\Omega))}\leq C(\varepsilon),\nonumber \\
    &&||\rho \bb{u}||_{L^2(0,T; L^{\frac{6\gamma}{\gamma+6}}(\Omega))}\leq C(\varepsilon),\nonumber\\
    &&||\rho \bb{u}\otimes \bb{u}||_{L^2(0,T; L^{\frac{6\gamma}{4\gamma+3}}(\Omega))}\leq C(\varepsilon),\nonumber\\
    && ||\bfeta||_{L^\infty(0,T; W^{k_0,2}(\Omega_S))}+||\partial_t\bfeta||_{L^2(0,T; W^{k_0,2}(\Omega_S))}+\sup\limits_{t\in(0,T)} K_\varepsilon(t) \leq C(\varepsilon). \nonumber
\end{eqnarray}
\end{cor}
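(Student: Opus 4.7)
The plan is to extract all seven estimates directly from the energy inequality stated in Definition \ref{coupledbackproblem}, supplemented by standard tools: Korn's inequality for velocity with Dirichlet data, the Sobolev embedding $H^1_0(\Omega)\hookrightarrow L^6(\Omega)$, and H\"older's inequality. First I would observe that the right-hand side of the energy inequality is bounded in terms of the (regularised) initial data, and this bound depends on $\varepsilon$ only through $E_\varepsilon(\bfeta_0)$, $\rho_{0,\varepsilon}$ and $(\rho\bb{u})_{0,\varepsilon}$, which by construction satisfy uniform bounds in $\varepsilon$. Thus every non-negative term on the left-hand side is bounded by a constant $C(\varepsilon)$.

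Reading off from this energy inequality, the standard kinetic and potential terms give $\sqrt{\rho}\bb{u}\in L^\infty(0,T;L^2(\Omega))$ and $\rho\in L^\infty(0,T;L^\gamma(\Omega))$ (and even $\rho\in L^\infty(0,T;L^\beta(\Omega))$ with a $C(\varepsilon)$-constant from the artificial pressure). The structure bounds $\|\bfeta\|_{L^\infty_tW^{k_0,2}_x}$ and $\|\partial_t\bfeta\|_{L^2_tW^{k_0,2}_x}$ and the bound on $K_\varepsilon(\bfeta)$ then follow from assumption (E1), the $\varepsilon^{a_0}\|\cdot\|^2_{W^{k_0,2}}$-augmentation inside $E_\varepsilon$, the $\varepsilon\|\cdot\|^2_{W^{k_0,2}}$-augmentation inside $R_\varepsilon$, and the explicit $K_\varepsilon$-term. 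To obtain \eqref{nabla:rho:est}, I would split $\Omega$ into $\{\rho\le1\}$ and $\{\rho>1\}$; on the first set one controls $|\nabla\rho|^2$ by $\rho^{\gamma-2}|\nabla\rho|^2$ if $\gamma\le 2$ (and by a trivial comparison otherwise), and on the second set by $\rho^{\beta-2}|\nabla\rho|^2$ using $\beta\ge 2$. The corresponding weighted integrals are controlled by the $\varsigma$-term in the energy inequality, yielding $\sqrt{\varsigma}\|\nabla\rho\|_{L^2_{t,x}}\le C(\varepsilon)$.

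For the velocity, the dissipation $\int\mathbb{S}_{\varepsilon,\bfeta}(\nabla\bb{u}):\nabla\bb{u}$ is bounded below pointwise by $\varepsilon\mu|\nabla\bb{u}+\nabla^\tau\bb{u}-\tfrac{2}{3}(\nabla\cdot\bb{u})\mathbb{I}|^2/2 + \varepsilon\zeta(\nabla\cdot\bb{u})^2$ thanks to the factor $\varepsilon$ on the complement of $\Omega^\bfeta$. Combined with the Dirichlet boundary condition $\bb{u}|_{\partial\Omega}=0$ and the classical Korn inequality, this gives $\|\bb{u}\|_{L^2_t H^1_x}\le C(\varepsilon)$, and then the 3D Sobolev embedding yields $\|\bb{u}\|_{L^2_t L^6_x}\le C(\varepsilon)$.

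The remaining bounds on $\rho\bb{u}$ and $\rho\bb{u}\otimes\bb{u}$ are obtained by factorising the momentum and stress densities and applying H\"older's inequality to the pieces that are already controlled. Concretely, $\rho\bb{u} = \sqrt{\rho}\cdot(\sqrt{\rho}\bb{u})$ with $\sqrt{\rho}\in L^\infty_t L^{2\gamma}_x$ and $\sqrt{\rho}\bb{u}\in L^\infty_t L^2_x$ yields the $L^\infty_t L^{2\gamma/(\gamma+1)}_x$ estimate; writing instead $\rho\bb{u}=\rho\cdot\bb{u}$ and using $\rho\in L^\infty_t L^\gamma_x$, $\bb{u}\in L^2_t L^6_x$ gives the $L^2_t L^{6\gamma/(\gamma+6)}_x$ estimate (here $\tfrac{1}{\gamma}+\tfrac{1}{6}=\tfrac{6+\gamma}{6\gamma}$). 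Finally, for $\rho\bb{u}\otimes\bb{u}$ I would decompose as $\sqrt{\rho}\cdot(\sqrt{\rho}\bb{u})\cdot\bb{u}$ and apply H\"older with the three factor spaces $L^\infty_t L^{2\gamma}_x$, $L^\infty_t L^2_x$, $L^2_t L^6_x$, which reproduces the exponent $\tfrac{1}{2\gamma}+\tfrac{1}{2}+\tfrac{1}{6}=\tfrac{4\gamma+3}{6\gamma}$ in space and $\tfrac12$ in time. There is no real obstacle beyond bookkeeping; the only slightly delicate point is the passage from the weighted gradient bound in the energy to the unweighted $\sqrt{\varsigma}\|\nabla\rho\|_{L^2}$ bound, handled by the split above.
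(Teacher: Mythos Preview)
Your overall strategy is correct and matches the paper's intent: the corollary is stated without proof precisely because all seven bounds are meant to be read off from the energy inequality of Definition~\ref{coupledbackproblem} together with standard Korn, Sobolev, and H\"older arguments, exactly as you outline. The structure bounds, the $H^1$-bound on $\bb u$ via Korn with zero boundary data, the embedding into $L^6$, and the three H\"older factorisations for $\rho\bb u$ and $\rho\bb u\otimes\bb u$ are all fine.

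There is, however, a genuine gap in your derivation of \eqref{nabla:rho:est}. Your split into $\{\rho\le1\}$ and $\{\rho>1\}$ works when $\gamma\le 2$ (then $\rho^{\gamma-2}\ge1$ on $\{\rho\le1\}$) and on $\{\rho>1\}$ via the $\beta$-term. But the phrase ``by a trivial comparison otherwise'' hides a real problem: if $\gamma>2$, then on $\{\rho\le1\}$ both weights $\rho^{\gamma-2}$ and $\rho^{\beta-2}$ are $\le 1$ and vanish as $\rho\to0$, so neither weighted term in the energy inequality dominates $|\nabla\rho|^2$ there. Since the paper only assumes $\gamma>12/7$, the case $\gamma>2$ is not excluded, and the energy inequality alone does not yield \eqref{nabla:rho:est}.

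The standard remedy (and what the reference to \cite[Section~7]{NovStr} is implicitly invoking) is to test the damped continuity equation \eqref{reconteqweakapp} with $\rho$ itself. Using $\bb u|_{\partial\Omega}=0$ and the Neumann condition for $\rho$, one obtains
\[
\frac12\frac{d}{dt}\int_\Omega\rho^2+\varsigma\int_\Omega|\nabla\rho|^2+\int_{\Omega_S^{\bfeta}}\rho^3=-\frac12\int_\Omega\rho^2\,\nabla\!\cdot\bb u\le \frac12\|\nabla\bb u\|_{L^2}\|\rho\|_{L^4}^2.
\]
The right-hand side is controlled by the already-established bounds $\bb u\in L^2_tH^1_x$ and $\rho\in L^\infty_tL^\beta_x$ (with $\beta\ge4$), both with $C(\varepsilon)$ constants, and integrating in time gives $\varsigma\|\nabla\rho\|_{L^2_{t,x}}^2\le C(\varepsilon)$ for all $\gamma$.
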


\section{The interaction limit $h\to 0$ and the density dissipation limit $\varsigma\to 0$}\label{sec:h:lim}
The solution obtained in the previous section for a given $\varsigma, h,\varepsilon>0$ in the sense of Definition $\ref{coupledbackproblem}$ is denoted as $(\bfeta_h,\rho_h,\bb{u}_h)$. The goal is to pass to the limit $\varsigma, h\to 0$ (we can choose w.l.o.g.\ that $\varsigma=h$) and to prove that the limiting functions $(\bfeta,\rho,\bb{u})$ satisfy the following problem:

\begin{mydef}\label{weaksolh}
We say that $({\bfeta },\rho,\bb{u})$ is a weak solution of a regularized problem on an extended domain if 
\begin{eqnarray*}
    &&\bfeta \in L^\infty(0,T; \mathcal{E}^q\cap W^{k_0,2}),~ \partial_t \bfeta \in L^2(0,T;W^{k_0,2}(\Omega_S)),\\
    &&\bb{u} \in L^2(0,T; H^1(\Omega)),~ \rho \in L^
    \infty(0,T;L^\gamma(\Omega)),
\end{eqnarray*}
and
\begin{enumerate}
\item The fluid and solid velocities match on the solid domain, i.e. $\partial_t \bfeta = \mathbf{u} \circ \bfeta$ on $Q_{S,T}$;
\item The renormalized continuity equation
\begin{eqnarray}\label{reconteqweakh}
    &&\int_{Q_{F,T}^{\bfeta }} \rho B(\rho)( \partial_t \varphi +\bb{u}\cdot \nabla \varphi)  \nonumber \\
    &&=\int_{Q_{F,T}^{\bfeta }} b(\rho)(\nabla\cdot \bb{u}) \varphi +\int_{\Omega_F^{\bfeta_{0,\varepsilon}}} \rho_{0,\varepsilon}  B(\rho_{0,\varepsilon}) \varphi(0,\cdot) , 
\end{eqnarray}
holds for all $\varphi \in C^\infty([0,T)\times \mathbb{R}^3)$ and any $b\in L^{\infty}(0,\infty) \cap C[0,\infty)$ such that $b(0)=0$ with $B(x)=B(1)+\int_1^x \frac{b(z)}{z^2}dz$;
\item The coupled momentum equation
\begin{eqnarray}
    &&\int_{Q_{F,T}^{\bfeta }} \rho \bb{u} \cdot\partial_t \boldsymbol\xi + \int_{Q_{F,T}^{\bfeta }}(\rho \bb{u} \otimes \bb{u}):\nabla\boldsymbol\xi +\int_{Q_{F,T}^{\bfeta }}(\rho^\gamma+\varepsilon \rho^\beta )\nabla \cdot \boldsymbol\xi -  \int_{Q_{F,T}^{\bfeta }} \mathbb{S}_{\varepsilon,\bfeta}(\nabla\bb{u}): \nabla \boldsymbol\xi \nonumber\\
    &&\quad+\int_{Q_{S,T}} \partial_t {\bfeta }\cdot \partial_t \boldsymbol\phi -\int_0^T\langle DE_\varepsilon(\bfeta),\boldsymbol\phi \rangle-\int_0^T\langle D_2R_\varepsilon(\bfeta,\partial_t \bfeta), \boldsymbol\phi \rangle -\int_0^T \langle DK_\varepsilon (\bfeta),\boldsymbol\phi \rangle\nonumber\\
    &&=\int_{\Omega^{\bfeta_0}}(\rho\mathbf{u})_{0,\varepsilon}\cdot\boldsymbol\xi(0,\cdot)+ \int_{\Omega_S} \bb{v}_0 \cdot \boldsymbol\phi(0,\cdot)\nonumber \\ \label{momeqweakh}
\end{eqnarray}
holds for all $\boldsymbol\xi \in C_0^\infty([0,T)\times \overline{\Omega})$, where $\boldsymbol\phi = \boldsymbol\xi \circ \bfeta$ on $Q_{S,T}$;
\item Energy inequality 
\begin{eqnarray*}
    &&\int_{\Omega_F^{\bfeta}(t)} \Big( \frac{1}{2} \rho |\bb{u}|^2 + \frac{\rho^\gamma}{\gamma-1}+\varepsilon \frac{\rho^\beta}{\beta-1}\Big)(t)  + \int_0^t\int_{\Omega} \left(\mathbb{S}_{\varepsilon,\bfeta}(\nabla \mathbf{u}):\nabla \mathbf{u}\right)(\tau)~d\tau  \\
    &&\quad+  \frac{1}{2}\int_{\Omega_S} |\partial_t {\bfeta }|^2(t) + E_\varepsilon(\bfeta(t)) +\int_0^t 2R_\varepsilon(\bfeta,\partial_t \bfeta) +K_\varepsilon(\bfeta)(t) \\
    &&\leq \int_{\Omega_F^{\bfeta_0}} \Big( \frac{1}{2\rho_{0,\varepsilon}} |(\rho\bb{u})_{0,\varepsilon}|^2 + \frac{\rho_{0,\varepsilon}^\gamma}{\gamma-1}   \Big) + \frac{1}{2}\int_{\Omega_S} |\bb{v}_0|^2 + E_\varepsilon(\bfeta_{0,\varepsilon}), \label{enineqh}
    \end{eqnarray*}
holds for all $t\in (0,T]$.

\end{enumerate}

\end{mydef}

\begin{cor}
The above weak solution satisfies additional estimates independent from $\varepsilon$:
\begin{eqnarray*}
    &&||\rho \bb{u}||_{L^\infty(0,T; L^{\frac{2\gamma}{\gamma+1}}(\Omega_F^\bfeta(t)))}\leq C,\nonumber \\
    &&||\rho \bb{u}||_{L^2(0,T; L^{\frac{6\gamma}{\gamma+6}}(\Omega_F^\bfeta(t)))}\leq C,\nonumber\\
    &&||\rho \bb{u}\otimes \bb{u}||_{L^2(0,T; L^{\frac{6\gamma}{4\gamma+3}}(\Omega_F^\bfeta(t)))}\leq C,\nonumber\\
    &&  ||\bfeta||_{L^\infty(0,T; W^{2,q}(\Omega_S))}+||\partial_t\bfeta||_{L^2(0,T; H^1(\Omega_S))}\leq C.\nonumber
\end{eqnarray*}

\end{cor}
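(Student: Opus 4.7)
All four families of estimates should follow by extracting terms from the $\varepsilon$-uniform energy inequality in part (4) of Definition \ref{weaksolh}, combined with the structural assumptions (E1), (R2), the kinematic coupling $\partial_t\bfeta = \bb{u}\circ\bfeta$ on $Q_{S,T}$, Sobolev embedding, and Hölder's inequality. The main input is the observation that $E_\varepsilon \geq E$ and $R_\varepsilon \geq R$, which lets us discard the $\varepsilon$-regularising parts of the energy and dissipation and work with the intrinsic functionals.

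First I would read off from the energy inequality the uniform bounds $\sup_t E(\bfeta(t)) \leq C$, $\int_0^T R(\bfeta,\partial_t\bfeta)\,dt \leq C$, $\sup_t \|\partial_t\bfeta(t)\|_{L^2(\Omega_S)} \leq C$, $\sup_t \|\rho(t)\|_{L^\gamma(\Omega_F^\bfeta(t))} \leq C$, $\sup_t \int_{\Omega_F^\bfeta(t)} \rho|\bb{u}|^2 \leq C$, and $\int_0^T\!\!\int_{\Omega_F^\bfeta(t)} |\nabla\bb{u}|^2 \leq C$ (the last one follows because $\mathbb{S}_{\varepsilon,\bfeta} = \mathbb{S}$ on the fluid domain and Korn's inequality applies). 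Coercivity (E1) then upgrades the bound on $E(\bfeta)$ to $\|\bfeta\|_{L^\infty_tW^{2,q}(\Omega_S)} \leq C$; applying the energy-dependent Korn estimate (R2) pointwise in time and integrating gives $\|\partial_t\bfeta\|_{L^2_tH^1(\Omega_S)} \leq C$ after squaring.

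Next, to obtain the fluid-momentum bounds on $\Omega_F^\bfeta(t)$, I would use the standard interpolation. Writing $\rho\bb{u} = \sqrt{\rho}\cdot\sqrt{\rho}\,\bb{u}$, Hölder with exponents $(2\gamma,2)$ combines $\sqrt{\rho}\in L^\infty_tL^{2\gamma}_x$ with $\sqrt{\rho}\,\bb{u}\in L^\infty_tL^2_x$ to yield $\rho\bb{u}\in L^\infty_tL^{2\gamma/(\gamma+1)}_x$. For the second bound I would first promote $\bb{u}$ to $L^2_t H^1(\Omega)$ uniformly in $\varepsilon$: on the fluid part this is from (vi), while on $\Omega_S^\bfeta(t)$ the kinematic coupling combined with the uniform $C^{1,\alpha}$ bound on $\bfeta$ and $\inf \det\nabla\bfeta>0$ (via (E2)) turns the $L^2_t H^1(\Omega_S)$ bound on $\partial_t\bfeta$ into an $L^2_t H^1(\Omega_S^\bfeta(t))$ bound on $\bb{u}=\partial_t\bfeta\circ\bfeta^{-1}$. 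Sobolev embedding then gives $\bb{u}\in L^2_tL^6_x(\Omega)$, and Hölder with exponents $(\gamma,6)$ produces $\rho\bb{u}\in L^2_tL^{6\gamma/(\gamma+6)}_x$. Finally, pairing $\rho\bb{u}\in L^\infty_tL^{2\gamma/(\gamma+1)}_x$ with $\bb{u}\in L^2_tL^6_x$ via Hölder (with $\tfrac{\gamma+1}{2\gamma}+\tfrac{1}{6}=\tfrac{4\gamma+3}{6\gamma}$) yields the bound on $\rho\bb{u}\otimes\bb{u}$.

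No serious obstacle is expected. The only point requiring care is that uniformity in $\varepsilon$ must be retained everywhere; this is why one should work with $E$ and $R$ (not $E_\varepsilon$ and $R_\varepsilon$), and use the kinematic coupling rather than the degenerate dissipation $\mathbb{S}_{\varepsilon,\bfeta}$ to control $\bb{u}$ on the solid part. Once that dichotomy is respected, all estimates reduce to the already-established $\varepsilon$-independent versions obtained in the proof of the first corollary.
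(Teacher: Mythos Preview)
Your overall strategy matches the paper's: the estimates are read off the energy inequality using $E_\varepsilon \geq E$, $R_\varepsilon \geq R$, coercivity (E1), the Korn-type bound (R2), the kinematic coupling to build a global velocity on $\Omega$, and then Sobolev and H\"older on the fluid domain. The paper does not write out a proof of this corollary, but the key ingredient---the global Eulerian velocity field $\bb v$ and the resulting $\varepsilon$-independent $L^2_tH^1(\Omega)$ bound via Korn on the fixed domain $\Omega$---is exactly the content of Section~6.3.

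There is one genuine imprecision in your order of operations. You first claim $\int_0^T\!\int_{\Omega_F^\bfeta(t)} |\nabla\bb u|^2 \leq C$ by applying Korn on $\Omega_F^\bfeta(t)$ alone, and then later use this to ``promote $\bb u$ to $L^2_tH^1(\Omega)$''. But Korn on $\Omega_F^\bfeta(t)$ is \emph{not} uniform in $\varepsilon$: as $\varepsilon\to 0$ the fluid domain can develop cusps and near-contact regions, and the Korn constant degenerates. The energy inequality only gives you the symmetric gradient $\|\mathbb S(\nabla\bb u)\|_{L^2(\Omega_F^\bfeta)}$ uniformly. The correct order (which is what Section~6.3 does) is: first obtain $\partial_t\bfeta \in L^2_tH^1(\Omega_S)$ via (R2), then glue the symmetric gradients on $\Omega_F^\bfeta$ and $\Omega_S^\bfeta$ to get $\|\mathbb S(\nabla\bb v)\|_{L^2(\Omega)}\leq C$ for the global field $\bb v$, and only then apply Korn on the \emph{fixed} domain $\Omega$ using the boundary condition $\bb v|_{\partial\Omega}=0$. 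After that, restriction gives the uniform $H^1$ bound on the fluid part and the rest of your H\"older arguments go through unchanged.
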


\subsection{Limit in the continuity equation and vanishing density inside the structure domain}
Based on the uniform estimates, we have
\begin{eqnarray*}
    \bb{u}_h \rightharpoonup \bb{u}, \qquad \text{in } L^2(0,T; H^1(\Omega)), \\
    \rho_h \to \rho, \qquad \text{in } L^\infty(0,T; L^\gamma(\Omega)).
\end{eqnarray*}
Moreover, noticing that the coupled momentum equation $\eqref{coupledmomapp}$ can be tested by any $\boldsymbol\phi \in L^2(0,T; W_0^{k_0,2}(\Omega_S))$, where $\boldsymbol\xi=0$ on $[0,T]\times \overline{\Omega_F^\bfeta}(t)$, one can deduce that $\partial_{tt}\bfeta_h$ is uniformly bounded in $L^2(0,T; W^{-k_0,2}(\Omega_S))$, so one can use the Aubin-Lions lemma to deduce that 
\begin{eqnarray}\label{eta:t:strong}
    \partial_t\bfeta_h \to  \partial_t\bfeta,\qquad &\text{in } L^2(0,T; L^2(\Omega_S)),
\end{eqnarray}
which combined with the bound $\frac1{2h}\int_{0}^T\int_{\Omega_S^{\bfeta_h}(t)}|\bb{u}_h - \partial_t \bfeta_h\circ \bfeta_h^{-1}|^2\leq C$ implies
\begin{eqnarray*}
    \bb{u}_h \to \partial_t \bfeta\circ \bfeta^{-1}, \qquad \text{in } L^2((0,T)\times \Omega_S^\bfeta(t)).
\end{eqnarray*}
By using $||\bb{u}_h||_{L^2(0,T; L^6(\Omega))}\leq C(\varepsilon)$ and $||\partial_t \bfeta_h\circ \bfeta_h^{-1}||_{L^2(0,T; L^6(\Omega_S^{\bfeta_h}(t)))}\leq C(\varepsilon)$, one then has
\begin{eqnarray}\label{str:mom:conv2}
    \bb{u}_h \to \partial_t \bfeta\circ \bfeta^{-1}, \qquad \text{in } L^2(0,T; L^p(\Omega_S^\bfeta(t))), \quad \text{ for any } p<6,
\end{eqnarray}
which gives us
\begin{eqnarray}\label{str:mom:conv}
    \rho_h \bb{u}_h  \to \rho \partial_t \bfeta\circ \bfeta^{-1}, \qquad \text{in } L^2(0,T; L^{p}(\Omega_S^\bfeta(t))),\quad \text{ for any } p<\frac{6\gamma}{\gamma+6}.
\end{eqnarray}
From the coupled momentum equation, one can also deduce that
\begin{eqnarray}\label{time:der:est}
    ||\partial_t(\rho_h\bb{u}_h)||_{ L^2(0,T;W^{-k_0,2}(\Omega_F^\bfeta(t)))} \leq C(\varepsilon),\qquad \text{for some } p>1,
\end{eqnarray}
and since $\rho_h \bb{u}_h$ is bounded in $L^2(0,T; L^{\frac{6\gamma}{4\gamma+3}}(\Omega))$ and $L^{\frac{6\gamma}{4\gamma+3}}(\Omega) \hookrightarrow\hookrightarrow H^{-s}(\Omega)$ for some $s<1$, we deduce by the Aubin-Lions lemma
\begin{eqnarray}\label{fl:mom:conv}
    \rho_h \bb{u}_h \to \rho \bb{u} \qquad \text{in } L^2(0,T; H^{-1}(\Omega_F^\bfeta(t))).
\end{eqnarray}

Finally, due to the estimate $\sqrt{\varsigma}||\nabla\rho_h||_{L^2((0,T)\times \Omega)}\leq C$, one has that $\varsigma \nabla \rho\rightharpoonup 0$. This means that the limiting functions $\rho,\bb{u}$ satisfy the continuity equation in the weak form. Note that at this point, there is still a term in the continuity equation $\overline{{\rho}^{2}}$, which is the weak limit of $\rho_h^2$ in $(0,T)\times\Omega_S(t)$. Moreover, this equation holds on the entire domain $\Omega$. In order to show that $\rho$ satisfies $\eqref{reconteqweakh}$, let us prove that the density vanishes inside the structure domain:
\begin{lem}\label{vanish:density}
We have
\begin{eqnarray*}
    \rho=0 \text{ and } \overline{{\rho}^{2}} = 0, \qquad \text{a.e. in } (0,T)\times\Omega_S(t),
\end{eqnarray*}
where $\overline{{\rho}^{2}}$ is the weak limit of $\rho_h^2$ in $(0,T)\times\Omega_S(t)$.
\end{lem}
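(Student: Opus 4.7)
The plan is to pass to the limit in the damped continuity equation, then test the resulting equation with functions transported by the flow $\bfeta$. This converts the problem into an ODE-in-time on the Lagrangian reference domain $\Omega_S$, from which vanishing follows by monotonicity and the zero initial datum.

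First I would pass to the limit in the approximate equation \eqref{reconteqweakapp}. Combining the established convergences $\rho_h \rightharpoonup^* \rho$ in $L^\infty(0,T;L^\gamma(\Omega))$, $\bb{u}_h \rightharpoonup \bb{u}$ in $L^2(0,T;H^1(\Omega))$, and $\rho_h\bb{u}_h \to \rho\bb{u}$ in $L^2(0,T;H^{-1}(\Omega))$, together with $\sqrt{\varsigma}\|\nabla\rho_h\|_{L^2_{t,x}}\leq C$ (which makes the $-\varsigma\Delta\rho_h$ term vanish) and the uniform $L^1$-bound on $\rho_h^{\gamma+1}$ over $(0,T)\times\Omega_S^{\bfeta_h}(t)$ from the energy inequality (yielding $\rho_h^2\chi_{\Omega_S^{\bfeta_h}(t)}$ bounded in $L^{(\gamma+1)/2}$), one obtains
\begin{equation*}
\int_0^T\!\!\int_\Omega \rho(\partial_t\varphi + \bb{u}\cdot\nabla\varphi)\, - \int_0^T\!\!\int_{\Omega_S^\bfeta(t)}\overline{\rho^2}\,\varphi = \int_\Omega \rho_{0,\varepsilon}\varphi(0)
\end{equation*}
for every $\varphi\in C_c^\infty([0,T)\times\mathbb{R}^3)$. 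Here $\overline{\rho^2}$ is the weak limit of $\rho_h^2$ on the solid region, and the convergence of the moving indicator is justified by the uniform convergence $\bfeta_h\to\bfeta$ obtained above.

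Next I would specialize the test function to $\varphi(t,x) := \psi(t)\chi(\bfeta^{-1}(t,x))$, extended by zero outside $\Omega_S^\bfeta(t)$, for arbitrary $\chi\in C_c^\infty(\Omega_S)$ and $\psi\in C_c^\infty([0,T))$. Using the kinematic identity $\bb{u}=\partial_t\bfeta\circ\bfeta^{-1}$ on the solid and the associated relation $\partial_t\bfeta^{-1} = -\nabla\bfeta^{-1}\cdot\bb{u}$, a direct computation gives $\partial_t\varphi + \bb{u}\cdot\nabla\varphi = \psi'(t)\,\chi(\bfeta^{-1}(t,x))$. Pulling back via $x=\bfeta(t,X)$ and introducing
\begin{equation*}
\tilde\rho(t,X) := \rho(t,\bfeta(t,X))\,\det\nabla\bfeta(t,X),\qquad \widetilde{\overline{\rho^2}}(t,X) := \overline{\rho^2}(t,\bfeta(t,X))\,\det\nabla\bfeta(t,X),
\end{equation*}
the equation reduces to
\begin{equation*}
\int_0^T\!\!\int_{\Omega_S}\tilde\rho\,\psi'(t)\chi(X) - \int_0^T\!\!\int_{\Omega_S}\widetilde{\overline{\rho^2}}\,\psi(t)\chi(X) = \int_{\Omega_S}\tilde\rho(0,X)\,\chi(X)\psi(0).
\end{equation*}
Arbitrariness of $\chi$ and $\psi$ makes this the weak formulation of $\partial_t\tilde\rho = -\widetilde{\overline{\rho^2}}$ on $(0,T)\times\Omega_S$, with initial datum $\tilde\rho(0,\cdot)\equiv 0$ due to $\rho_{0,\varepsilon}$ being supported in $\Omega_F^{\bfeta_{0,\varepsilon}}$.

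Finally I would apply a monotonicity argument: since $\rho_h\geq 0$ gives $\tilde\rho,\widetilde{\overline{\rho^2}}\geq 0$, the above ODE shows $t\mapsto\tilde\rho(t,\cdot)$ is non-increasing, and combined with the zero initial datum and non-negativity, this forces $\tilde\rho\equiv 0$ and thus $\widetilde{\overline{\rho^2}}\equiv 0$. Because $\det\nabla\bfeta>0$ on $\Omega_S$ (as $\bfeta\in\mathcal{E}^q$), reversing the change of variables yields $\rho=0$ and $\overline{\rho^2}=0$ a.e.\ on $(0,T)\times\Omega_S^\bfeta(t)$. The main technical obstacle is securing enough smoothness of $\bfeta^{-1}$ for the test function $\chi\circ\bfeta^{-1}$ to be genuinely admissible in $C_c^\infty([0,T)\times\mathbb{R}^3)$ and for the change of variables to be rigorously justified on the time-dependent domain; this is exactly what the additional $W^{k_0,2}$-regularization in $E_\varepsilon$ is designed to provide at the approximate level.
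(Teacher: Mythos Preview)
Your proposal is correct and follows essentially the same approach as the paper: pass to the limit in the continuity equation, test with functions transported by $\bfeta$ (so that the material derivative on the solid collapses to a pure time derivative via $\bb{u}=\partial_t\bfeta\circ\bfeta^{-1}$), pull back to the Lagrangian reference domain, and conclude from non-negativity and the zero initial datum. The only cosmetic difference is that the paper fixes a terminal time $\tau$ and a time-independent spatial test function, obtaining directly that the sum of two non-negative integrals vanishes, whereas you separate variables as $\psi(t)\chi(X)$ and phrase the conclusion as an ODE $\partial_t\tilde\rho=-\widetilde{\overline{\rho^2}}$; these are equivalent.
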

\begin{proof}
Let $\tau\in(0,T)$. We choose $B(\rho)=1$ and $\varphi\in H^1(0,\tau; H_0^1(\Omega_S^\bfeta(t)))$ in the continuity equation to obtain
\begin{eqnarray*}
    \int_0^\tau \int_{\Omega_S^\bfeta(t)} \rho (\partial_t \varphi+\bb{u}\cdot \nabla \varphi) =  \int_{\Omega_S^\bfeta(\tau)} (\rho \varphi)(\tau)- \underbrace{\int_{\Omega_S^{\bfeta_{0,\varepsilon}}(0)} \rho_{0,\varepsilon} \varphi}_{=0} + \int_0^\tau \int_{\Omega_S^\bfeta(t)} \overline{\rho^{2}} \varphi
\end{eqnarray*}
since $\rho_{0,\varepsilon}=0$ on $\Omega_S(0)$, by construction. Now, recalling that $\bb{u} = \partial_t\bfeta\circ\bfeta^{-1}$ on $(0,T)\times \Omega_S(t)$, one has
\begin{eqnarray*}
    (\partial_t \varphi+\bb{u}\cdot \nabla \varphi)(t,\bfeta(t,x)) = \frac{d}{dt} \varphi(t,\bfeta(t,x))
\end{eqnarray*}
so one can compose both integrals with $\bfeta$
\begin{eqnarray*}
    \int_0^t \int_{\Omega_S}\det\nabla\bfeta \tilde\rho \partial_t\tilde\varphi=  \int_{\Omega_S}(\det\nabla\bfeta \tilde\rho \tilde\varphi)(t) + \int_0^t\int_{\Omega_S}(\det\nabla\bfeta \tilde{\overline{{\rho}^{2}}} \tilde\varphi)
\end{eqnarray*}
where $\tilde\rho = \rho\circ\bfeta$, $\tilde\varphi=\varphi\circ\bfeta$ and $\tilde{\overline{{\rho}^{2}}} = \overline{{\rho}^{2}}\circ \bfeta$. Now, for any $\tilde\varphi\in H^1(\Omega_S)$ independent of time (note that then $\varphi = \tilde\varphi\circ\bfeta^{-1}$ is a suitable test function), one has
\begin{eqnarray*}
    \int_{\Omega_S}(\det\nabla\bfeta \tilde\rho\tilde\varphi)(\tau)+ \int_0^t\int_{\Omega_S}(\det\nabla\bfeta \tilde{\overline{{\rho}^{2}}}\tilde\varphi)=0.
\end{eqnarray*}
We can now easily construct a sequence of functions $\tilde\varphi$ which converge to $1_{|\Omega_S}$, and since $\tilde\rho, \tilde{\overline{{\rho}^{2}}}\geq 0$ and $\det\nabla\bfeta>0$, this implies that $\tilde\rho = 0$ and $\tilde{\overline{{\rho}^{2}}} = 0$ on $\Omega_S$ and the conclusion follows.
\end{proof}

\bigskip
Finally, let us point out that since $\rho \in L^{\infty}(0,T;L^\beta(\Omega_F^\bfeta(t)))$, it means that $\rho$ is square integrable so one can use \cite[Lemma 6.9]{NovStr} to deduce that the limiting functions also satisfy the renormalized continuity equation $\eqref{reconteqweakh}$.

\subsection{Limit in the coupled momentum equation}
First, by using $\eqref{fl:mom:conv}$ and the weak convergence of $\bb{u}_h$ in $L^2(0,T;H^1(\Omega_F^\bfeta(t)))$, one deduces that 
\begin{eqnarray*}
    \rho_h \bb{u}_h \otimes \bb{u}_h \rightharpoonup \rho \bb{u}\otimes \bb{u}, \qquad \text{in } L^1((0,T) \times \Omega_F^\bfeta(t)),
\end{eqnarray*}
while combining $\eqref{str:mom:conv}$ and $\eqref{str:mom:conv2}$ implies
\begin{eqnarray*}
    \rho_h \bb{u}_h \otimes \bb{u}_h \rightharpoonup \rho \partial_t \bfeta\circ \bfeta^{-1}\otimes \partial_t \bfeta\circ \bfeta^{-1} = \rho \bb{u}\otimes \bb{u}, \qquad \text{in } L^1((0,T) \times \Omega_S^\bfeta(t)),
\end{eqnarray*}
so
\begin{eqnarray*}
    \rho_h \bb{u}_h \otimes \bb{u}_h \rightharpoonup \rho \bb{u}\otimes \bb{u}, \qquad \text{in } L^1((0,T) \times \Omega).
\end{eqnarray*}
Next, due to the uniform estimate on $\bb{u}_h$ in $L^2(0,T; H^1(\Omega))$ and the strong convergence of $\bfeta_h$ in $C([0,T]\times \Omega_S)$, one deduces
\begin{eqnarray*}
    \mathbb{S}_{\varepsilon,\bfeta_h}(\nabla \bb{u}_h)\to \mathbb{S}_{\varepsilon,\bfeta}(\nabla \bb{u}), \qquad \text{in } L^2(0,T; L^2(\Omega)).
\end{eqnarray*}

Finally, due to $\eqref{time:der:est}$ and uniform boundedness of $\rho_h\bb{u}_h$ in $L^\infty(0,T; L^{\frac{2\gamma}{\gamma+1}}(\Omega))$, one deduces 
\begin{eqnarray*}
    \rho_h \bb{u}_h \to \rho \bb{u}, \qquad \text{in } L^\infty(0,T;L^{\frac{2\gamma}{\gamma+1}}(\Omega_F^\bfeta(t))).
\end{eqnarray*}
To show the convergence of the structure terms, first note that the test function $\boldsymbol\phi$ in $\eqref{coupledmomapp}$ depends on $\bfeta_h$, so the derivatives of $\boldsymbol\phi$ depend on the derivatives of $\bfeta_h$. The following convergences hold:
\begin{eqnarray*}
    \partial_t\bfeta_h \rightharpoonup  \partial_t\bfeta,& \qquad &\text{in } L^2(0,T; W^{k_0,2}(\Omega_S)), \\
    \bfeta_h\rightharpoonup  \bfeta,& \qquad &\text{in } L^\infty(0,T; W^{k_0,2}(\Omega_S))\\
    \bfeta_h\to \bfeta,& \qquad &\text{in } L^2(0,T; W^{k_0,2}(\Omega_S)),
\end{eqnarray*}
where the last convergence is due to the Minty property and it follows in the same way as in \cite[Proposition 2.23]{benevsova2020variational}. 
This combined with $\eqref{eta:t:strong}$ allows us to pass to the limit all of the structure terms.

Finally, it remains to show the convergence of pressure.

\subsubsection{Convergence of pressure}
In order to prove that $\rho_h^\gamma+\varepsilon \rho_h^\beta$ has a weak limit $\overline{p}$ in $L^1(0,T \times \Omega_S^\bfeta(t))$, we utilize the bound $\int_0^T\int_{\Omega_S^\bfeta(t)}(\rho_h^{\gamma+1}+\varepsilon\rho_h^{\beta+1}) \leq C$. Due to Lemma $\ref{vanish:density}$, one has that $\overline{p}=0$ on $(0,T)\times \Omega_S^\bfeta(t)$. so it remains to prove the weak convergence of $\rho_h^\gamma+\varepsilon \rho_h^\beta$ on the fluid domain $(0,T)\times \Omega_F^\bfeta(t)$:

\begin{lem}\label{improve:est:int}
Let $I$ be an interval and $S\Subset \Omega_F^\bfeta(t)$ for all $t\in I$ be a set with regular boundary. Then, there is an $h_S>0$ such that for all $0<h<h_S$, one has the following improved pressure estimates:
\begin{eqnarray*}
    \int_{I\times S} \rho_h^{\gamma+1}+\varepsilon \rho_h^{\beta+1} \leq C(S),
\end{eqnarray*}
where $C(S)$ depends on the initial energy and blows up as $\text{dist}(S,\partial\Omega_F^{\bfeta_h}(t))$ goes to zero.
\end{lem}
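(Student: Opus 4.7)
The strategy is the classical Bogovskii/Riesz-operator test-function argument, adapted to our time-dependent fluid domain. The main point is that because $S$ is strictly inside the limiting fluid domain on the compact time interval $I$, we may choose a cutoff that stays in the approximate fluid domain uniformly for small $h$, and then use a test function of the form $\boldsymbol\xi = \psi(t)\varphi(x)\,\mathcal{A}[\varphi\rho_h]$, where $\mathcal{A}=\nabla\Delta^{-1}$ acts on $\mathbb{R}^3$ (or, equivalently, a Bogovskii-type operator on a fixed ball). When inserted into the momentum equation, the leading pressure contribution produces $\int\psi\varphi^2\rho_h^{\gamma+1}+\varepsilon\int\psi\varphi^2\rho_h^{\beta+1}$, and all remaining terms will be shown to be bounded using the $\varepsilon$-uniform estimates.

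First, I would fix $h_S$ as follows. Since $\bfeta_h \to \bfeta$ strongly in $C(I\times\overline{\Omega_S})$ (from the $W^{k_0,2}$-bound and Arzelà-Ascoli) and $\mathrm{dist}(S,\partial\Omega_F^\bfeta(t))\geq \delta>0$ uniformly on the compact $I$, for $h$ small enough $S \Subset \Omega_F^{\bfeta_h}(t)$ with a margin $\delta/2$. Then I would pick $\psi\in C_c^\infty(\mathrm{int}\,I;[0,1])$ and $\varphi\in C_c^\infty(\Omega;[0,1])$ with $\varphi\equiv 1$ on $S$ and $\mathrm{supp}\,\varphi$ contained in the $\delta/4$-tube inside $\Omega_F^{\bfeta_h}(t)$ for every $t\in\mathrm{supp}\,\psi$ and every $h<h_S$. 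In particular $\boldsymbol\phi = \boldsymbol\xi\circ\bfeta_h = 0$ on $Q_{S,T}$, so when we test \eqref{coupledmomapp} the entire structure block drops out.

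The second, technical step is to set $\boldsymbol\xi(t,x):= \psi(t)\varphi(x)\,\mathcal{A}[\varphi\rho_h](t,x)$ and to substitute it into \eqref{coupledmomapp}. Since $\rho_h$ is bounded in $L^\infty_t L^\gamma_x \cap L^{\gamma+1}_{t,x}(\Omega_S^{\bfeta_h})^c$ for the relevant ranges and $\varphi\rho_h$ has compact support, Calderón–Zygmund theory gives $\mathcal{A}[\varphi\rho_h]\in L^\infty_t W^{1,\gamma}_x$; together with $\varphi$ this keeps $\boldsymbol\xi\in L^\infty_t W^{1,\gamma}_x$, while $\partial_t\boldsymbol\xi$ is controlled using the (damped) continuity equation \eqref{reconteqweakapp} — giving $\partial_t\mathcal{A}[\varphi\rho_h] = -\mathcal{A}[\nabla\cdot(\varphi\rho_h\bb{u}_h)] + \text{lower-order}$, whose norm is controlled by $\|\rho_h\bb{u}_h\|_{L^2 L^{6\gamma/(\gamma+6)}}$ plus the damping/penalization corrections. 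The pressure term then reads
\begin{align*}
\int_0^T\!\!\int_\Omega (\rho_h^\gamma+\varepsilon\rho_h^\beta)\nabla\cdot\boldsymbol\xi
&= \int_0^T\!\!\int_\Omega \psi\varphi^2(\rho_h^{\gamma+1}+\varepsilon\rho_h^{\beta+1}) + \int_0^T\!\!\int_\Omega \psi(\rho_h^\gamma+\varepsilon\rho_h^\beta)\nabla\varphi\cdot\mathcal{A}[\varphi\rho_h],
\end{align*}
and the commutator-type second integral is estimated by Hölder together with the Sobolev embedding for $\mathcal{A}[\varphi\rho_h]$ and the $L^\infty_tL^\gamma_x$-bound on $\rho_h$.

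The remaining terms—$\int \rho_h\bb{u}_h\cdot\partial_t\boldsymbol\xi$, $\int\rho_h\bb{u}_h\otimes\bb{u}_h:\nabla\boldsymbol\xi$, $\int\mathbb{S}_{\varepsilon,\bfeta_h}(\nabla\bb{u}_h):\nabla\boldsymbol\xi$, the damping corrections $\varsigma\int\nabla\rho_h\cdot\nabla\bb{u}_h\cdot\boldsymbol\xi$ and $\tfrac{1}{2}\int_{\Omega_S^{\bfeta_h}}\rho_h^2\bb{u}_h\cdot\boldsymbol\xi$, and the initial-data contribution—are all bounded by the $\varepsilon$-uniform estimates in the corollary following Definition \ref{coupledbackproblem}, multiplied by a constant that depends on $\|\varphi\|_{W^{1,\infty}}$ and the size of $\mathrm{supp}\,\varphi$; the latter blow up only as $\mathrm{dist}(S,\partial\Omega_F^{\bfeta_h}(t))\to 0$, yielding the announced dependence $C(S)$. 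Absorbing the commutator and collecting everything gives the claimed bound. The step I expect to be the most delicate is the control of $\partial_t\boldsymbol\xi$ through the damped continuity equation: here one must verify that the extra terms $\rho_h^2\chi_{\Omega_S^{\bfeta_h}}$ and $\varsigma\Delta\rho_h$ (which live \emph{outside} $\mathrm{supp}\,\varphi$ for $h<h_S$, but whose convolution with $\mathcal{A}$ is non-local) produce contributions that vanish or remain bounded uniformly in $h$; this is where the localization of $\varphi$ inside $\Omega_F^{\bfeta_h}(t)$ with positive margin is crucial.
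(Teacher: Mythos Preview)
Your proposal is correct and follows essentially the same approach as the paper: both test the coupled momentum equation with a Bogovskii/Riesz-type function localized in the fluid domain so that the structure block drops out, and then estimate the remaining terms by the available energy bounds. The only cosmetic difference is that the paper uses $\boldsymbol\xi=\varphi\nabla\Delta_{S'}^{-1}[\rho_h]$ with the Dirichlet inverse Laplacian on a slightly larger set $S\subset S'\Subset\Omega_F^{\bfeta}(t)$, whereas you take $\psi(t)\varphi(x)\mathcal{A}[\varphi\rho_h]$ with $\mathcal{A}=\nabla\Delta^{-1}$ on $\mathbb{R}^3$; both variants are standard in the Lions--Feireisl theory and lead to the same conclusion.

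One remark on the point you flag as delicate: since $\operatorname{supp}\varphi\subset\Omega_F^{\bfeta_h}(t)$ for all $h<h_S$, the damping term $\rho_h^2\chi_{\Omega_S^{\bfeta_h}}$ satisfies $\varphi\,\rho_h^2\chi_{\Omega_S^{\bfeta_h}}\equiv 0$ pointwise, so it never enters $\partial_t(\varphi\rho_h)$ at all; there is no nonlocal contribution to worry about from this term. The only genuinely extra term compared to the classical argument is $\varsigma\varphi\Delta\rho_h$, and this is controlled (after one integration by parts inside $\mathcal{A}$) by $\sqrt{\varsigma}\|\nabla\rho_h\|_{L^2}\leq C(\varepsilon)$ from \eqref{nabla:rho:est}. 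So the step is less delicate than you anticipate.
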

\begin{proof}
The proof is done by testing $\eqref{momeqweakh}$ with $\boldsymbol\xi:=\varphi \nabla \Delta_{S'}^{-1}[\rho_h]$, where $S\subset S'\Subset \Omega_F^\bfeta(t)$ for all $t\in I$ is a set with regular boundary, $\varphi$ is a smooth cut-off function which is equal to $1$ on $S$ and $0$ outside of $S'$, and $\Delta_S^{-1}$ is the inverse Laplace operator on $S$ with Dirichlet data. At this point, this proof is standard and can be found in \cite[Lemma 6.3]{compressible}.
\end{proof}

\begin{lem}\label{improve:est:bnd}
There is a $h_0>0$ such that for every $\varepsilon'>0$, there is a $\delta'>0$, such that
\begin{eqnarray*}
     \int_{0}^{T} \int_{\{d(x, \partial\Omega_F^{\bfeta_h}(t))<\delta'\}} \rho_h^{\gamma}+\varepsilon \rho_h^\beta \leq C(\varepsilon) \varepsilon',
\end{eqnarray*}
for all $\varepsilon\leq \varepsilon_0$.
\end{lem}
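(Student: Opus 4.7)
The plan is to extend the interior pressure estimate of Lemma~\ref{improve:est:int} up to the fluid-structure interface by testing the coupled momentum equation~\eqref{coupledmomapp} against a Bogovskii-type test function whose divergence captures $\chi_{\delta'}\rho_h$, where $\chi_{\delta'}$ is a smooth cut-off of the $\delta'$-neighbourhood of $\partial\Omega_F^{\bfeta_h}(t)$. This is in the spirit of the classical boundary pressure estimates of Feireisl--Novotn\'y on Lipschitz domains, with the extra work being to handle the time-dependent domain and the structure-side coupling.

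\textbf{Setup and uniform regularity of the interface.} Because the contact-penalization $K_\varepsilon(\bfeta_h)$ stays uniformly bounded along the approximate energy inequality, for each fixed $\varepsilon>0$ the deformations $\bfeta_h$ keep a uniform (in $h$) positive distance from both self-contact and contact with the rigid boundary. Consequently $\Omega_F^{\bfeta_h}(t)$ is a Lipschitz (in fact $C^{1,\alpha}$) domain whose geometric constants depend only on $\varepsilon$, so the Bogovskii operator $\mathcal{B}_{\bfeta_h(t)}$ on $\Omega_F^{\bfeta_h}(t)$ has an $h$-uniform bound $L^r\to W^{1,r}_0$ for every $1<r<\infty$. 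Fix $\chi_{\delta'}$ equal to $1$ on $U_{\delta'/2}(t):=\{d(\cdot,\partial\Omega_F^{\bfeta_h}(t))<\delta'/2\}$ and $0$ outside $U_{\delta'}(t):=\{d(\cdot,\partial\Omega_F^{\bfeta_h}(t))<\delta'\}$, and define
\[
\boldsymbol\xi_{\delta'}(t,\cdot):=\mathcal{B}_{\bfeta_h(t)}\!\left(\chi_{\delta'}\rho_h-\frac{1}{|\Omega_F^{\bfeta_h}(t)|}\int_{\Omega_F^{\bfeta_h}(t)}\chi_{\delta'}\rho_h\right),
\]
extended by zero to $\Omega$. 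Since $\boldsymbol\xi_{\delta'}$ vanishes on the interface, the extension lies in $W^{1,r}_0(\Omega)$ and, after a standard time-regularization, is admissible in~\eqref{coupledmomapp}.

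\textbf{Plugging in.} Testing~\eqref{coupledmomapp} with $\boldsymbol\xi_{\delta'}$, the pressure term produces
\[
\int_0^T\int_{\Omega_F^{\bfeta_h}(t)}(\rho_h^\gamma+\varepsilon\rho_h^\beta)\,\chi_{\delta'}\rho_h\,dx\,dt \;-\; \text{(mean correction)},
\]
which dominates $\int_0^T\int_{U_{\delta'/2}(t)}(\rho_h^\gamma+\varepsilon\rho_h^\beta)$ up to lower-order terms controlled by the total fluid mass and the $L^\infty_t L^\gamma_x$ a priori bound on $\rho_h$. Every remaining term (kinetic, convective, viscous, artificial density damping, the structure terms via $\boldsymbol\phi=\boldsymbol\xi_{\delta'}\circ\bfeta_h$ and the discrete acceleration) is bounded by the $h$-uniform a priori estimates times suitable norms of $\boldsymbol\xi_{\delta'}$ and $\nabla\boldsymbol\xi_{\delta'}$, which by the Bogovskii estimate reduce to norms of $\chi_{\delta'}\rho_h$ over $U_{\delta'}(t)$. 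Since $|U_{\delta'}(t)|\to 0$ uniformly in $t$ as $\delta'\to 0$, these norms vanish by dominated convergence, so for $\delta'$ small enough the claimed bound $C(\varepsilon)\varepsilon'$ follows.

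\textbf{Main obstacle.} The most delicate part is the solid-side contribution. For $\boldsymbol\phi=\boldsymbol\xi_{\delta'}\circ\bfeta_h$ on $\Omega_S$ the terms $\int_0^T\langle DE_\varepsilon(\bfeta_h),\boldsymbol\phi\rangle$, $\int_0^T\langle D_2R_\varepsilon(\bfeta_h,\partial_t\bfeta_h),\boldsymbol\phi\rangle$, $\int_0^T\langle DK_\varepsilon(\bfeta_h),\boldsymbol\phi\rangle$ and the discrete inertial term $\int_{Q_{S,T}}\tfrac{\partial_t\bfeta_h-\tau_h\partial_t\bfeta_h}{h}\cdot\boldsymbol\phi$ all have to be made arbitrarily small with $\delta'$. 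Using the $W^{k_0,2}$-regularity of $\bfeta_h$ together with the fact that $\boldsymbol\phi$ is supported in the Lagrangian layer $\{y\in\Omega_S:\bfeta_h(y)\in U_{\delta'}(t)\}$ keeps the elastic and penalization estimates quantitative, while the discrete acceleration is handled via a discrete integration by parts in time; this is what forces the restriction $h\leq h_0$ and produces the $h$-independent constant $C(\varepsilon)$.
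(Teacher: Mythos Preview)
Your identification of the ``main obstacle'' is off: since $\boldsymbol\xi_{\delta'}$ has zero trace on $\partial\Omega_F^{\bfeta_h}(t)$ and is extended by zero to the solid region, the Lagrangian test function $\boldsymbol\phi=\boldsymbol\xi_{\delta'}\circ\bfeta_h$ vanishes \emph{identically} on $\Omega_S$. Hence every solid term in \eqref{coupledmomapp} (elastic, dissipative, contact-penalization, and the discrete acceleration $\tfrac{\partial_t\bfeta_h-\tau_h\partial_t\bfeta_h}{h}\cdot\boldsymbol\phi$) drops out automatically; there is nothing to estimate there.

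The genuine gap is in the fluid inertia term $\int_{Q_T}\rho_h\bb{u}_h\cdot\partial_t\boldsymbol\xi_{\delta'}$. Your test function depends on $\delta'$ through the cut-off $\chi_{\delta'}$, and $\partial_t\chi_{\delta'}$ picks up a factor $1/\delta'$ from the transition layer (the boundary $\partial\Omega_F^{\bfeta_h}(t)$ is moving with speed $\sim|\partial_t\bfeta_h|$, while $\chi_{\delta'}$ changes from $0$ to $1$ over a distance $\sim\delta'$). Concretely, $\|\partial_t\chi_{\delta'}\cdot\rho_h\|_{L^p_x}\lesssim \delta'^{-1}\|\rho_h\|_{L^\beta}|U_{\delta'}|^{1/p-1/\beta}\sim \delta'^{1/p-1/\beta-1}$, which blows up for every $p>1$; since Bogovskii requires $p>1$, there is no admissible choice that makes this term small as $\delta'\to 0$. (The time-dependence of the Bogovskii domain itself creates a similar commutator issue.) A secondary point: $\int\chi_{\delta'}\rho_h(\rho_h^\gamma+\varepsilon\rho_h^\beta)$ does not dominate $\int_{U_{\delta'/2}}(\rho_h^\gamma+\varepsilon\rho_h^\beta)$ on the vacuum set; this is patchable via $\int_{U_{\delta'/2}}\rho_h^\gamma\leq\int_{U_{\delta'/2}}\rho_h^{\gamma+1}+|U_{\delta'/2}|$, but you should say so.

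The paper avoids the $1/\delta'$ blow-up by using a single test function \emph{independent of the target neighbourhood size}: in local boundary coordinates one takes $\boldsymbol\xi_h\sim (x_3-\phi(t,x_1,x_2))^{\lambda}\,\bb{e}_3$ with $\tfrac{2\gamma-3}{6\gamma}<\lambda<1$, glued by a partition of unity. This function vanishes on $\partial\Omega_F^{\bfeta_h}$ (so again $\boldsymbol\phi\equiv 0$ and all solid terms disappear), and its divergence behaves like $\lambda\,d(x,\partial\Omega_F^{\bfeta_h})^{\lambda-1}$, which blows up at the boundary. Testing once yields a \emph{uniform} bound $\int_0^T\int_{\Omega_F^{\bfeta_h}}(\rho_h^\gamma+\varepsilon\rho_h^\beta)\,d^{\lambda-1}\leq C(\varepsilon)$, and the smallness near the boundary is then read off a posteriori from $d^{\lambda-1}\geq 1/\varepsilon'$ on $\{d<\delta'\}$. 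The time derivative $\partial_t\boldsymbol\xi_h$ involves only $\partial_t\phi$ (hence $\partial_t\bfeta_h$) and the fixed weight $d^{\lambda-1}$, with no $\delta'$-dependent factor, which is why the estimate closes.
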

\begin{proof}
The proof follows the approach from \cite{kukucka}, so only the main steps are presented. Let $h>0$ and $t_0\in [0,T]$ be fixed. First note that, due to the regularity of $\bfeta_h$ and $\partial\Omega$, for all $t\in(t_0-s,t_0+s)\cap (0,T)$ where $s>0$, the boundary $\partial\Omega_F^{\bfeta_h}(t)$ can be represented as a union of graphs of functions $\phi_i: (t_0-s,t_0+s)\times V_i \to \mathbb{R}$ in local coordinate systems $(x_i^1,x_i^2,x_i^3)$, where $1\leq i \leq m$, so that the fluid domain lies below all the graphs. In other words, for every $(t,x)\in (t_0-s,t_0+s)\times (\partial\Omega_S\cup \partial\Omega)$, there is an $1\leq i \leq m$ and $y\in V_i$ such that $\bfeta_h(t,x)=\bb{s}_i((y,0)+\phi_i(t)(y)\bb{e}_3^i)$ (and similarly for every $(t,x)\in (t_0-s,t_0+s)\times \partial\Omega$), where $\bb{e}_3^i$ is the unit vector in $x_i^3$ direction in the corresponding local coordinate system, and $\bb{s}_i(\bb{v})=\bb{r}_i+A_i \bb{v}$ is a combination of translation $\bb{r}_i$ and rotation $A_i$. Note that we can choose a uniform $h_0>0$ such that these graphs are well-defined on $(t_0-s,t_0+s)$ for all $0<h<h_0$. We define
\begin{eqnarray*}
    v_i(t,x_i^1,x_i^2,x_i^3):=\begin{cases} (x_i^3 - \phi_i(t,x_1,x_2))^\lambda,&  \quad x_i^3 \leq  \phi_i(t,x_1,x_2), \\
    0,& \quad \text{elsewhere},
    \end{cases}
\end{eqnarray*}
for $\frac{2\gamma-3}{6\gamma}<\lambda<1$, and
\begin{eqnarray*}
    \boldsymbol\xi_h(t,x):= \sum_{i=1}^m 
\psi_i(x) v_i (t, \bb{s}_i^{-1}(x))\bb{s}_i (\bb{e}_3^i).
\end{eqnarray*}
Here $\{\psi_i\}_{1\leq i \leq m+2}$ is a partition of unity, subordinate to sets $\Omega_1,...,\Omega_{m+2}$ such that $\cup_{i=1}^{m+2} \Omega_i\supseteq \Omega$, for all $t\in (t_0-s,t_0+s)$ the sets $\Omega_1,...,\Omega_m$ contain the graphs of $m$ functions $\phi_1(t),...,\phi_m(t)$, $\Omega_{m+1}\subset \text{int} \cap_{t\in (t_0-s,t_0+s)} \Omega_F^{\bfeta_h}(t)$ and $\Omega_{m+2}\subset \text{int} \cap_{t\in (t_0-s,t_0+s)} \Omega_S^{\bfeta_h}(t)$. Additionally, each of the sets $\Omega_i$ for $1\leq i \leq m$ can only have a non-empty intersection with $\Omega_{m+1}, \Omega_{m+2}$ and $\Omega_j$ such that $\phi_i$ and $\phi_j$ are neighbouring graphs.

Note that $\boldsymbol\xi_h$ vanishes outside of $\Omega_F^{\bfeta_h}$. Moreover, $\boldsymbol\xi_h\in L^\infty(0,T; W^{1,q}(\Omega))\cap H^1(0,T; L^4(\Omega))$ for any $q<\frac1{1-\lambda}$, due to the regularity of ${\bfeta_h}_{|\partial\Omega_S^{\bfeta_h}}$ and the fact that it behaves as a distance function to power $\lambda$ near the boundary. Due to the specific lower bound for $\lambda$, we also have that $\boldsymbol\xi_h\in L^\infty(0,T; W^{1,\frac{6\gamma}{4\gamma+3}}(\Omega))$. 

Finally, note also that we can extend this construction to the entire interval $[0,T]$ by covering $[0,T]$ with a finite number of time intervals of length $2s$ and using the partition of unity in time $\{\chi_i\}_{1\leq i\leq k}$ (here the key point is that $s$ can be chosen uniformly due to the regularity of $\bfeta_h$). Let the new function defined on $[0,T]$ be also denoted by $\boldsymbol\xi_h(t,x) = \sum_{j=1}^k \sum_{i=1}^m \chi_j(t)
\psi_i^j(x) v_i^j (t, (\bb{s}_i^j)^{-1}(x))\bb{s}_i^j (\bb{e}_3^{i,j})$. Now, by testing $\eqref{momeqweakh}$ with $\boldsymbol\xi_h $, one obtains
\begin{eqnarray*}
   &&\int_0^T \int_{\Omega_F^{\bfeta_h}(t)} (\rho_h^{\gamma}+\varepsilon \rho_h^\beta) \nabla\cdot \boldsymbol\xi_h \\
   &&=-\int_0^T\int_{\Omega_F^{\bfeta_h}(t)}  \rho_h \bb{u}_h \cdot\partial_t \boldsymbol\xi_h -\int_0^T\int_{\Omega_F^{\bfeta_h}(t)} (\rho_h \bb{u}_h \otimes \bb{u}_h):\nabla\boldsymbol\xi_h \nonumber \\
   &&\quad  + \int_0^T\int_{\Omega_F^{\bfeta_h}(t)}  \mathbb{S}_{\varepsilon,\bfeta_h}(\nabla\bb{u}_h): \nabla \boldsymbol\xi_h - \frac{1}2 \int_0^T\int_{\Omega_F^{\bfeta_h}(t)}  \rho_h \bb{u}_h \cdot \boldsymbol\xi_h+\int_{\Omega_F^{\bfeta_h}(t)}  \rho_h\mathbf{u}_h\cdot\boldsymbol\xi_h\Big|_0^T.
\end{eqnarray*}
Due to the estimates on $\rho_h,\bb{u}_h$ and $\boldsymbol\xi_h$ (note that the term $\rho_h \bb{u}_h \cdot \partial_t \boldsymbol\xi_h$ is uniformly bounded due to $\gamma>12/7$ even w.r.t. $\varepsilon$), the right-hand side is uniformly bounded for a fixed $\varepsilon$, so
\begin{eqnarray*}
    \int_0^T \int_{\Omega_F^{\bfeta_h}(t)} (\rho_h^{\gamma} +\varepsilon \rho_h^\beta)\nabla\cdot \boldsymbol\xi_h \leq C(\varepsilon).
\end{eqnarray*}
The $\varepsilon$ here appears due to the penalization $K_\varepsilon$, which directly affects the shape of the fluid domain (i.e. how close to contact and self-contact can the elastic body be) and therefore the construction of $\boldsymbol\xi_h$ and its bounds. This implies
\begin{eqnarray*}
    &&\int_0^T \int_{\Omega_F^{\bfeta_h}(t)} \underbrace{(\rho_h^{\gamma} +\varepsilon \rho_h^\beta)\sum_{j=1}^k \sum_{i=1}^m \chi_j(t)
    \psi_i^j(x)\nabla\cdot \big(v_i^j (t, (\bb{s}_i^j)^{-1}(x))\bb{s}_i^j (\bb{e}_3^{i,j})}_{\geq 0
    }\big) \\
    && \leq C(\varepsilon) - \int_0^T \int_{\Omega_F^{\bfeta_h}(t)} (\rho_h^{\gamma}+\varepsilon \rho_h^\beta) \sum_{j=1}^k \sum_{i=1}^m \chi_j(t)
    \nabla\psi_i^j(x) \cdot v_i^j (t, (\bb{s}_i^j)^{-1}(x))\bb{s}_i^j (\bb{e}_3^{i,j})\\
    &&\leq C(\varepsilon).
\end{eqnarray*}
The key point here is that $\nabla\cdot (v_i^j (t, (\bb{s}_i^j)^{-1}(x))\bb{s}_i^j (\bb{e}_3^{i,j})) = \lambda d_i^j(t,x)^{\lambda-1}$, where $d_i^j(t,x)$ is the distance of the point $x$ and the boundary $\partial\Omega_S^{\bfeta_h}(t)$ in the $\bb{s}_i^j (\bb{e}_3^{i,j})$-direction. These divergences blow up as one approaches the boundary, so one has that for every $\varepsilon'>0$, there is a $\delta'>0$ such that
\begin{eqnarray*}
    \sum_{i=1}^m 
    \psi_i(x)\nabla\cdot \big(v_i (t, \bb{s}_i^{-1}(x))\bb{s}_i (\bb{e}_3^i)\big) \geq \frac{1}{\varepsilon'}, \quad \text{ for all } t \in [0,T] \text{ and } x \text { such that } d(x, \partial\Omega_F^{\bfeta_h}(t)) < \delta',
\end{eqnarray*}
which implies
\begin{eqnarray*}
    &&\frac{1}{\varepsilon'}\int_0^T \int_{\{d(x, \partial\Omega_F^{\bfeta_h}(t)) < \delta'\}} (\rho_h^{\gamma}+\varepsilon \rho_h^\beta)\\
    &&\leq \int_0^T \int_{\{d(x, \partial\Omega_F^{\bfeta_h}(t)) < \delta'\}} (\rho_h^{\gamma} +\varepsilon \rho_h^\beta)\sum_{j=1}^k \sum_{i=1}^m \chi_j(t)
    \psi_i^j(x)\nabla\cdot \big(v_i^j (t, (\bb{s}_i^j)^{-1}(x))\bb{s}_i^j (\bb{e}_3^{i,j}) \leq C(\varepsilon),
\end{eqnarray*}
so the proof is finished.
\end{proof}

\begin{cor}\label{cor:lim}
There is a function $p$ such that
\begin{eqnarray*}
    \rho_h^\gamma+\varepsilon \rho_h^\beta \rightharpoonup \overline{p} \text{ in } L^1(0,T; L^1(\Omega_F^\bfeta(t))).
\end{eqnarray*}
\end{cor}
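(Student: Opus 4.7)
The plan is to establish weak $L^1$ compactness of the pressure sequence via the Dunford--Pettis theorem. Extending $\rho_h^\gamma+\varepsilon\rho_h^\beta$ by zero outside $\Omega_F^{\bfeta_h}(t)$, the uniform $L^1$ bound follows immediately from the energy inequality (which bounds $\int\rho_h^\gamma$ and $\varepsilon\int\rho_h^\beta$ in $L^\infty_t L^1_x$). What remains is uniform integrability, and this is precisely where Lemmas \ref{improve:est:int} and \ref{improve:est:bnd} are designed to be combined.

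First I would fix an arbitrary $\varepsilon'>0$. Lemma \ref{improve:est:bnd} supplies a $\delta'>0$ such that, for all sufficiently small $h$,
\[
\int_0^T\int_{\{d(x,\partial\Omega_F^{\bfeta_h}(t))<\delta'\}}(\rho_h^\gamma+\varepsilon\rho_h^\beta)\,dx\,dt \;\leq\; C(\varepsilon)\,\varepsilon'.
\]
Complementarily, the interior region $A_{\delta'}(t):=\{x\in\Omega_F^{\bfeta_h}(t):d(x,\partial\Omega_F^{\bfeta_h}(t))\geq\delta'\}$ can be covered by finitely many time-space cylinders $I\times S$ with $S\Subset\Omega_F^{\bfeta_h}(t)$ for $t\in I$, uniformly in small $h$; this step uses the strong convergence $\bfeta_h\to\bfeta$ in $C^0([0,T]\times\overline{\Omega_S})$, coming from the $W^{k_0,2}$ regularity with $k_0$ large. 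On each such cylinder Lemma \ref{improve:est:int} provides a uniform bound $\int_{I\times S}(\rho_h^{\gamma+1}+\varepsilon\rho_h^{\beta+1})\leq C(\delta')$.

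For any measurable $E\subset(0,T)\times\Omega$, splitting $E$ along the $\delta'$-tube around the interface and applying H\"older's inequality on the interior piece yields
\[
\int_E(\rho_h^\gamma+\varepsilon\rho_h^\beta) \;\leq\; C(\varepsilon)\,\varepsilon' \,+\, C(\delta')\,|E|^{\frac{1}{\gamma+1}}.
\]
Choosing first $\varepsilon'$ small and then $|E|$ small makes this bound as small as desired, independently of $h$. This is exactly the uniform integrability criterion, and Dunford--Pettis then extracts a subsequence converging weakly in $L^1((0,T)\times\Omega)$ to some non-negative $\overline p$. Lemma \ref{vanish:density} already identifies the weak limit of $\rho_h$ as zero on the solid domain, and an essentially identical argument (applied after the extraction) shows that $\overline p$ is supported on the limit fluid set, giving the claimed convergence in $L^1(0,T;L^1(\Omega_F^\bfeta(t)))$.

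The main obstacle is the coordination of the moving boundaries: Lemma \ref{improve:est:bnd} controls a tube around $\partial\Omega_F^{\bfeta_h}(t)$ rather than around the limit boundary, and the pressure concentrations we are trying to rule out at this level can only happen near the interface. The uniform $C^0$ convergence $\bfeta_h\to\bfeta$ (together with the $\varepsilon$-dependent contact penalization $K_\varepsilon$, which keeps the fluid domain regular at this approximation level) ensures that a fixed tube around $\partial\Omega_F^\bfeta(t)$ eventually contains the $\bfeta_h$-tube and vice versa, so the boundary-layer smallness transfers cleanly to the limit geometry. This is what lets us bypass the irregularities that will cause genuine defect measures in the \emph{subsequent} limit $\varepsilon\to 0$.
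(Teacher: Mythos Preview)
Your argument is correct and relies on the same two ingredients as the paper's proof: the interior higher-integrability estimate (Lemma~\ref{improve:est:int}) and the smallness estimate near the boundary (Lemma~\ref{improve:est:bnd}). The packaging differs slightly: you invoke Dunford--Pettis, establishing uniform integrability on all of $(0,T)\times\Omega$ and extracting a weak $L^1$ limit abstractly, whereas the paper proceeds more directly by first defining $\overline{p}$ on compactly contained cylinders via the $L^{1+\delta}$ bound and then verifying, for each fixed test function, that the measure-valued boundary contribution is bounded by $C(\varepsilon)\varepsilon'$ and hence vanishes as $\varepsilon'\to 0$. Both routes are standard and equivalent here; your version has the virtue of making the ``no concentration'' mechanism explicit as uniform integrability, while the paper's version avoids the (minor) step of translating the $\bfeta_h$-dependent boundary tube into a fixed cover of the limit geometry. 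Your final remark about the $C^0$ convergence of $\bfeta_h$ reconciling the two tubes is exactly the point that bridges the two presentations.
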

\begin{proof}
For any $t_0\in [0,T]$ and all sets $S\Subset \Omega_F^\bfeta(t_0)$, there is an $h_0>0$ and a time interval $(t_0-s,t_0+s)$ such that for all $t\in (t_0-s,t_0+s)\cap [0,T]$ and $0<h<h_0$ one has $S\Subset \Omega_F^\bfeta(t)$ and $S\Subset \Omega_F^{\bfeta_h}(t)$. On each of these cylinders $(t_0-s,t_0+s)\times S$, one can use Lemma $\ref{improve:est:int}$ to conclude that there is a function $\overline{p}$ such that
\begin{eqnarray*}
     \rho_h^\gamma+\varepsilon \rho_h^\beta \rightharpoonup \overline{p} \text{ in } L^1((t_0-s,t_0+s)\times S).
\end{eqnarray*}
This defines the function $\overline{p}$ on the entire set $(0,T) \times \Omega_F^\bfeta(t)$. Thus, for every $\boldsymbol\xi \in C^\infty([0,T]\times \Omega)$ and every $\varepsilon'>0$ and $\delta'>0$ as in Lemma $\ref{improve:est:bnd}$, one has
\begin{eqnarray*}
    \int_0^T \int_{\Omega_F^{\bfeta_h}(t)} (\rho_h^\gamma+\varepsilon \rho_h^\beta) \varphi  &=& \int_0^T \int_{{\{d(x, \partial\Omega_F^{\bfeta_h}(t))\geq \delta'\}}} (\rho_h^\gamma+\varepsilon \rho_h^\beta) \varphi+\int_0^T \int_{\{d(x, \partial\Omega_F^{\bfeta_h}(t))<\delta'\}} (\rho_h^\gamma+\varepsilon \rho_h^\beta) \varphi \\
     &\to& \int_0^T \int_{\{d(x, \partial\Omega_F^\bfeta(t))\geq \delta'\}}\overline{p} \varphi+\int_0^T\langle \pi, \varphi\rangle_{[\mathcal{M}^+,C]({\{d(x, \partial\Omega_F^\bfeta(t))< \delta'\}})}, \quad \text{as } h\to 0,
\end{eqnarray*}
where $\pi$ is the weak limit of $\rho_h^\gamma$ on the set $[0,T]\times {\{d(x, \partial\Omega_F^\bfeta(t))< \delta'\}}$. Due to Lemma $\ref{improve:est:bnd}$, one has
\begin{eqnarray*}
    \int_0^T\langle \pi, \varphi\rangle_{[\mathcal{M}^+,C]({\{d(x, \partial\Omega_F^\bfeta(t))< \delta'\}})} \leq C(\varepsilon)\varepsilon',
\end{eqnarray*}
so by passing to the limit $\varepsilon'\to
 0$ the conclusion follows.
\end{proof}

One can use the standard weak compactness arguments which are based on the convergence of effective viscous flux, 
the renormalized continuity equation and the convexity of the function $x\mapsto x\ln x$ to conclude that $\rho_h\to \rho$ a.e. in $(0,T)\times \Omega_F^\bfeta(t)$. Note that here we use the fact that $\rho_h$ and $\bb{u}_h$ satisfy the following damped renormalized continuity equation
\begin{eqnarray}
    &&\int_0^T \int_\Omega  \rho B(\rho_h)( \partial_t \varphi +\bb{u}_h\cdot \nabla \varphi) -  \int_{0}^{T} \bint_{\Omega_S^\bfeta(t)}\rho_h^{2} \left(B(\rho_h)+\frac{b(\rho_h)}{\rho_h}\right) \varphi  \nonumber\\
    &&\quad- \varsigma \int_0^T \left(B(\rho_h)+\frac{b(\rho_h)}{\rho_h}\right) \nabla \rho_h \cdot \nabla \varphi  -  \varsigma \int_0^T \rho_h B(\rho_h) |\nabla \rho_h|^2  \varphi  =\int_{\Omega_F^{\bfeta_0}}\rho_{0,\varepsilon} B(\rho_{0,\varepsilon}) \varphi(0,\cdot). \nonumber
\end{eqnarray}
The last term on the LHS (which is the only non-standard one) does not affect this proof since it is positive and vanishes in the limit $h\to 0$. This approach is standard nowadays and is due to Lions \cite{LionsBook} for $\gamma\geq \frac95$ and Feireisl et. al. \cite{FeireislCompressible01} for $\gamma \in (\frac32, \frac95)$. These arguments are used in localized form away from the boundary $\partial\Omega_F^{\bfeta_h}$, and as such are completely unaffected by the presence of the solid (see for example \cite[Section 6.2 \& 6.4]{compressible}). \\

\subsection{Global Eulerian velocity field}
Now that we have passed to the limit $h\to 0$, we have $\bb{u} = \partial_t\bfeta\circ\bfeta^{-1}$ on $Q_{S,T}^\bfeta$, so it is convenient to introduce a global velocity field
\begin{eqnarray*}
    \bb{v}(t,x):=\begin{cases}
    \bb{u}(t,x),& \text{for } x\in \Omega_F^\bfeta(t),\\
    \partial_t\bfeta\circ\bfeta^{-1}(t,x), &\text{for } x\in \Omega_S^\bfeta(t).
    \end{cases}
\end{eqnarray*}
First, note that due to the regularity and invertibility of $\bfeta$, one has
\begin{eqnarray*}
    || \mathbb{S}(\nabla  (\partial_t\bfeta\circ\bfeta^{-1})) ||_{L^2((0,T)\times \Omega_S^\bfeta(t))} \leq C,
\end{eqnarray*}
which combined with the bound from the energy inequality
\begin{eqnarray*}
     || \mathbb{S}(\nabla \bb{u} ) ||_{L^2((0,T)\times \Omega_F^\bfeta(t))} \leq C
\end{eqnarray*}
implies
\begin{eqnarray*}
    || \mathbb{S}(\nabla \bb{v} ) ||_{L^2((0,T)\times \Omega)} \leq C.
\end{eqnarray*}
Thus, taking into consideration that $\bb{u}$ (and consequently $\bb{v}$) vanishes on $\partial\Omega$, one has by the Korn identity
\begin{eqnarray}
    ||\bb{v}||_{L^2(0,T; H^1(\Omega))} \leq C, \label{v:H1:est}
\end{eqnarray}
and consequently by imbedding
\begin{eqnarray*}
    ||\bb{v}||_{L^2(0,T; L^6(\Omega))} \leq C,
\end{eqnarray*}
where the constant $C$ only depends on the initial data and domain $\Omega$. Note that this constant does not depend on the distance between $\partial\Omega_F^\bfeta(t)$ and $\partial\Omega$ or possibly the shape of $\Omega_F^\bfeta(t)$, so this regularity is preserved even when $\varepsilon$ goes to zero and a possible contact appears.

\section{Vanishing artificial fluid diffusion, artificial pressure, structure regularization and vanishing contact-penalization limit $\varepsilon\to 0$}\label{sec:last:limit}
The solution obtained in the previous section for a given $\varepsilon>0$ in the sense of Definition $\ref{weaksolh}$ will now be denoted as $(\bfeta_\varepsilon,\rho_\varepsilon,\bb{u}_\varepsilon)$. The goal is to pass to the limit $\varepsilon\to 0$ and to prove that the limiting functions $(\bfeta,\rho,\bb{u})$ are a weak solution in the sense of Definition \ref{weaksol}. First, let us point out that due to convergence of the global Eulerian velocity $\bb{v}_\varepsilon$ in $L^2(0,T; H^1(\Omega))$, both kinematic coupling between fluid and the structure holds and the velocities at contact points match. \\

Most convergences are the same as in Section $\ref{sec:h:lim}$, and are thus omitted. We will first prove that the force appearing from the contact-penalization $DK_\varepsilon$ vanishes. This will result in possible contact of the solid with the rigid boundary and self-contact, which a priori creates irregularities in the fluid domain, such as for example cusps. Thus the convergence of pressure is different in this case, since we cannot exclude concentrations at these cusps.

\subsection{Convergence of the solid terms}
\subsubsection{Convergence of the contact-penalization}

In order to find the limit of the contact-penalization, we will observe the two terms separately.  

For the first term, corresponding to the collisions with the exterior boundary $\partial\Omega$, we observe that since the global test-function $\boldsymbol{\xi}$ has zero-boundary values, it is enough to consider the dense set of compactly supported functions instead. But then, for any fixed $\boldsymbol{\xi}$, at some point we have $\varepsilon < \dist(\supp \boldsymbol{\xi}, \partial \Omega)$, which means that for all $x\in \Omega_S$ such that $\kappa_\varepsilon'(\operatorname{dist}(\bfeta_\varepsilon(t,x),\partial \Omega)) \neq 0 $, we have $\boldsymbol{\xi}(\boldsymbol \bfeta (t,x)) = 0$. In other words, for any compactly supported test function $\boldsymbol{\xi}$ there is an $\varepsilon_0>0$ such that the first term resulting from the contact-penalization vanishes for all $\varepsilon\leq \varepsilon_0$.

For the self-contact-penalization the same is true, once we employ its symmetry. Fix $\delta > 0$ and $t_0\in [0,T]$. For any pair of points $x,y \in \Omega_S$ there are two possibilities. The first possibility is that $|\bfeta (t_0,x) - \bfeta(t_0,y)| =: \delta > 0$. In this case, for all $\varepsilon$ small enough and all $t, \tilde{x},\tilde{y}$ in a neighborhood of $t_0$, $x$ and $y$, respectively, one has $|\bfeta_\varepsilon (t,\tilde{x}) - \bfeta_\varepsilon(t,\tilde{y})| > \frac{\delta}{2}$ and consequently $\kappa_\varepsilon'(|\bfeta_\varepsilon (t,\tilde{x}) - \bfeta_\varepsilon(t,\tilde{y})|) = 0$ once $\varepsilon < \frac{\delta}{2}$. Therefore, these pairs $x,y$ do not contribute to the limit. The second possibility is that $|\bfeta (t_0,x) - \bfeta(t_0,y)| = 0$. In this case, assuming $x\neq y$, for any small neighborhood $I\times U \times V$ of $(t_0,x,y) \in [0,T]\times\Omega^S \times \Omega^S$, where w.l.o.g. $\dist(U,V) > \sqrt{\varepsilon}$, we can estimate by using $\eqref{eq:kappaGrowth}$
\begin{align*}
    \phantom{{}={}}& \left| \int_I\int_{U \times V} \kappa_\varepsilon'(|\bfeta_\varepsilon (t,\tilde{x}) - \bfeta_\varepsilon(t,\tilde{y})|) \frac{\bfeta_\varepsilon (t,\tilde{x}) - \bfeta_\varepsilon(t,\tilde{y})}{|\bfeta_\varepsilon (t,\tilde{x}) - \bfeta_\varepsilon(t,\tilde{y})|} \cdot ( \boldsymbol\phi(t,\tilde{x})- \boldsymbol\phi(t,\tilde{y})) d \tilde{x} d \tilde{y} dt \right|\\
     &\leq \int_I\int_{U \times V}  |\kappa_\varepsilon'(|\bfeta_\varepsilon (t,\tilde{x}) - \bfeta_\varepsilon(t,\tilde{y})|)|~ |\boldsymbol\xi(\bfeta_\varepsilon(t,\tilde{x}))- \boldsymbol\xi(\bfeta_\varepsilon(t,\tilde{x})) | d \tilde{x} d \tilde{y}dt\\
    &\leq \int_I\int_{U\times V}  |\kappa_\varepsilon'(|\bfeta_\varepsilon (t,\tilde{x}) - \bfeta_\varepsilon(t,\tilde{y})|)| \sup_{I\times\Omega} | \nabla \boldsymbol \xi | ~|\bfeta_\varepsilon (t,\tilde{x}) - \bfeta_\varepsilon(t,\tilde{y})| d \tilde{x} d \tilde{y} dt\\
    &\leq c\varepsilon \sup_{I\times\Omega} | \nabla \boldsymbol \xi |\int_I \int_{U\times V} \Big(\kappa_\varepsilon(|\bfeta_\varepsilon (t,\tilde{x}) - \bfeta_\varepsilon(t,\tilde{y})|) +1 \Big)d\tilde{x} d\tilde{y}dt \\
    & \leq c \sup_{I\times\Omega} | \nabla \boldsymbol \xi | \varepsilon \left( \int_I K_\varepsilon  +1 \right)  \leq c \sup_{I\times\Omega} | \nabla \boldsymbol \xi | \varepsilon (T E_0+1) \to 0,\quad \text{ as }\varepsilon\to 0,
\end{align*}
where $E_0$ is the uniform energy bound. Therefore, $\int_0^T\langle DK_\varepsilon(\bfeta_\varepsilon), \boldsymbol\phi \rangle\to 0$, for any test function as in $\eqref{momeqweak}$.

\subsubsection{Convergence of the solid regularization}

The convergence of the potential solid energy terms for vanishing regularization is more or less identical to that of those terms in \cite{benevsova2020variational} or \cite{breit2021compressible}. We will thus only sketch the proof.

First, using the same Aubin-Lions argument as before, we can obtain a strong convergence of $\partial_t\bfeta_\varepsilon \to \partial_t \bfeta$ in $L^2(Q_{S,T})$. With this in addition to the other convergences, we can then consider
\begin{align} \label{eq:minty}
    \lim_{\varepsilon\to 0} \int_0^T\langle DE_{\varepsilon}(\bfeta_\varepsilon)-DE(\bfeta), \psi (\bfeta_\varepsilon -\bfeta) \rangle
\end{align}
for some cut-off $\psi \in C_c(\Omega_S,[0,1])$. In expanding this, the crucial term will end up to be $\langle DE(\bfeta_\varepsilon), \psi(\bfeta_\varepsilon-\bfeta)\rangle$ as both sides depend on $\varepsilon$. All other terms converge using the standard weak limits already derived. For that last term, we now employ the weak equation. Due to the cutoff, we do not have to deal with fluid terms and all the solid-terms will either vanish or again be of lower order and thus subject to strong convergence. Thus the limit of \eqref{eq:minty} will be zero, which using the assumptions on $E$ then allows us to conclude convergence of the $DE$-term.

\subsection{Convergence of pressure}
\begin{lem}\label{int:est:pr:eps}
Let $I\subset [0,T]$ be an interval and $S\Subset \Omega_F^\bfeta(t)$ for all $t\in I$ be a set with regular boundary. Then, there is a $\varepsilon_S>0$ such that for all $0<\varepsilon<\varepsilon_S$, one has the following improved pressure estimates:
\begin{eqnarray*}
    \int_{I\times S} \rho_\varepsilon^{\gamma+a}+\varepsilon\rho^{\beta+a} \leq C(S),
\end{eqnarray*}
where $a=\frac23\gamma-1$ and $C(S)$ depends on the initial energy and blows up as $\text{dist}(S,\partial\Omega_F^\bfeta(t))$ goes to zero.
\end{lem}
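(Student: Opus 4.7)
The plan is to apply a localized Feireisl--Lions-type test-function argument on a slightly enlarged interior subdomain, in the same spirit as Lemma \ref{improve:est:int} but with a test function tuned to yield the additional integrability exponent $a=\tfrac{2}{3}\gamma-1$ rather than $1$. Since $S\Subset\Omega_F^\bfeta(t)$ for every $t\in I$ and $\bfeta_\varepsilon\to\bfeta$ uniformly on $I\times\overline{\Omega_S}$, I would first fix an intermediate set $S\subset S'\Subset\Omega_F^\bfeta(t)$ with smooth boundary, and then pick $\varepsilon_S>0$ so small that $S'\Subset\Omega_F^{\bfeta_\varepsilon}(t)$ holds uniformly in $t\in I$ for all $\varepsilon<\varepsilon_S$. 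With a cutoff $\varphi\in C_c^\infty(S')$ equal to $1$ on $S$, the test function I would use in \eqref{momeqweakh} is
\[
\boldsymbol\xi_\varepsilon(t,x) := \varphi(x)\,\nabla\Delta^{-1}_{S'}\!\bigl[\rho_\varepsilon^{a}(t,\cdot)-\langle\rho_\varepsilon^{a}(t,\cdot)\rangle_{S'}\bigr],
\]
so that $\nabla\cdot\boldsymbol\xi_\varepsilon=\varphi\,\rho_\varepsilon^a$ modulo a mean-value correction and a term supported where $\nabla\varphi\neq 0$; here $\Delta^{-1}_{S'}$ denotes the Dirichlet inverse Laplacian on $S'$.

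The principal contribution produced by this testing equals $\int_I\int_\Omega (\rho_\varepsilon^\gamma+\varepsilon\rho_\varepsilon^\beta)\varphi\,\rho_\varepsilon^a$, which controls the left-hand side of the claim up to an error term $\rho_\varepsilon^\gamma\langle\rho_\varepsilon^a\rangle$ that is already bounded by the energy. On the other side I would estimate four groups of terms: the time-derivative contribution $\rho_\varepsilon\bb u_\varepsilon\cdot\partial_t\boldsymbol\xi_\varepsilon$, which is rewritten using the renormalized continuity equation \eqref{reconteqweakh} with $b(\rho)=\rho^a$ combined with the elliptic regularization of $\nabla\Delta^{-1}$; the convective term $\rho_\varepsilon\bb u_\varepsilon\otimes\bb u_\varepsilon:\nabla\boldsymbol\xi_\varepsilon$; the viscous term $\mathbb{S}_{\varepsilon,\bfeta_\varepsilon}(\nabla\bb u_\varepsilon):\nabla\boldsymbol\xi_\varepsilon$; and the commutator terms involving $\nabla\varphi$, which live strictly inside $S'$ and are absorbed into $C(S)$. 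The specific exponent $a=\tfrac{2}{3}\gamma-1$ is dictated by pairing the convective term with $\nabla\boldsymbol\xi_\varepsilon$: a standard Hölder computation based on $\rho_\varepsilon\in L^\infty_tL^\gamma_x$, $\bb u_\varepsilon\in L^2_tL^6_x$, and the Calderón--Zygmund gain $\nabla\boldsymbol\xi_\varepsilon\in L^\infty_tL^{\gamma/a}_x$ closes precisely at $a=(2\gamma-3)/3$, while the hypothesis $\gamma>12/7$ is what keeps the time-derivative contribution integrable.

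The main obstacle is that the constants in the Calderón--Zygmund/Bogovski\u{\i} inequalities on $S'$ and the bounds on $\nabla\varphi$ depend on $\dist(S,\partial\Omega_F^\bfeta(t))$ and on the Lipschitz geometry of $\partial S'$. As $S$ approaches the (possibly irregular) boundary $\partial\Omega_F^\bfeta(t)$, one is forced to take $S'$ with progressively worse geometry, and $C(S)$ blows up accordingly, matching the dependence stated in the conclusion. Crucially though, because $\boldsymbol\xi_\varepsilon$ is supported in $S'\Subset\Omega_F^{\bfeta_\varepsilon}(t)$, the irregular points of (self-)contact never interact with the test function, so the entire estimate reduces to the classical interior improved-pressure estimate for compressible Navier--Stokes on the fixed smooth auxiliary domain $S'$ (cf.\ \cite[Section 6]{compressible}), which is precisely why no defect measure appears in this purely interior lemma.
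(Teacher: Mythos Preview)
Your proposal is correct and follows essentially the same approach as the paper. The paper does not give a separate proof of this lemma; it relies on the same localized Bogovski\u{\i}/inverse-Laplace test-function argument sketched for Lemma~\ref{improve:est:int} (there with $\rho_h$ in place of $\rho_\varepsilon^{a}$, yielding exponent $\gamma+1$) and defers the details to \cite[Lemma~6.3]{compressible}, which is precisely the interior estimate you reproduce.
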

\begin{lem}\label{loc:est:bnd}
Let $t_0\in [0,T]$ and $x_0\in \partial \Omega_F^\bfeta(t_0)\setminus \bfeta(\partial I_\bfeta(t_0))$. Then, there is an interval $I\ni t_0$ and a ball $B = B(x_0,r)$, for some $r>0$, such that the following holds. There is a $\varepsilon_{I,B}>0$ such that for every $\varepsilon'>0$, there is a $\delta'>0$, such that
\begin{eqnarray*}
     \int_I \int_{B\cap\{d(x, \partial\Omega_F^{\bfeta_\varepsilon}(t))<\delta'\}} \rho_\varepsilon^{\gamma}+\varepsilon\rho^\beta \leq C(I,B) \varepsilon',
\end{eqnarray*}
for all $\varepsilon<\varepsilon_{I,B}$, where constant $C(I,B)$ blows up as $\sup\limits_{t\in I}\text{dist}(x_0,\partial I_\bfeta(t))$ goes to zero.
\end{lem}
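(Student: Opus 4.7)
The strategy is to localize the construction of Lemma \ref{improve:est:bnd} around the regular boundary point $x_0$, producing bounds that are independent of $\varepsilon$ (and in particular of the vanishing contact-penalization $K_\varepsilon$) at the price of depending on how far $x_0$ sits from the singular set $\bfeta(\partial I_\bfeta(t))$. Because $x_0\in\partial\Omega_F^\bfeta(t_0)\setminus\bfeta(\partial I_\bfeta(t_0))$, Lemma \ref{lemma:contact1} (Claim 4) guarantees that near $x_0$ the interface $\partial\Omega_F^\bfeta(t_0)$ is a single $C^{1,\alpha}$-sheet, either a piece of $\partial\Omega\setminus\bfeta(C_\bfeta(t_0))$ or of $\bfeta(I_\bfeta(t_0))$. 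Combined with $\sup_{t\in I}\dist(x_0,\partial I_\bfeta(t))>0$ and the uniform $W^{2,q}\cap\mathcal{E}^q$-bounds on $\bfeta_\varepsilon$ together with the $C^{1,\alpha}$-compact embedding, a standard implicit function/tubular neighborhood argument produces a ball $B=B(x_0,r)$, an interval $I\ni t_0$ and a rigid motion $\bb{s}(v)=\bb{r}+Av$ such that, for all $\varepsilon<\varepsilon_{I,B}$ and $t\in I$,
\[
B\cap\partial\Omega_F^{\bfeta_\varepsilon}(t)=\bb{s}\bigl(\{x^3=\phi_\varepsilon(t,x^1,x^2)\}\bigr),
\]
with $\phi_\varepsilon$ uniformly bounded in $C^{1,\alpha}$ and the fluid lying below. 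The $C^{1,\alpha}$-norm of $\phi_\varepsilon$ deteriorates only as $\sup_{t\in I}\dist(x_0,\partial I_\bfeta(t))\to 0$.

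Using this single local graph I would repeat the test-function construction of Lemma \ref{improve:est:bnd}: fix $\lambda\in(\tfrac{2\gamma-3}{6\gamma},1)$, pick $\psi\in C^\infty_c(B)$ equal to $1$ near $x_0$, and define
\[
\boldsymbol\xi_\varepsilon(t,x):=\psi(x)\,v_\varepsilon(t,\bb{s}^{-1}(x))\,\bb{s}(\bb{e}_3),\qquad v_\varepsilon(t,y):=(\phi_\varepsilon(t,y^1,y^2)-y^3)_+^{\lambda}.
\]
Uniform $C^{1,\alpha}$-control on $\phi_\varepsilon$ yields uniform-in-$\varepsilon$ bounds on $\boldsymbol\xi_\varepsilon$ in $L^\infty(0,T;W^{1,q}(\Omega))$ for any $q<1/(1-\lambda)$ and in $H^1(0,T;L^4(\Omega))$, and the lower bound on $\lambda$ gives $\boldsymbol\xi_\varepsilon\in L^\infty(0,T;W^{1,6\gamma/(4\gamma+3)}(\Omega))$, as needed for pairing with $\rho_\varepsilon\bb{u}_\varepsilon\otimes\bb{u}_\varepsilon$ and $\rho_\varepsilon\bb{u}_\varepsilon$ (using $\gamma>12/7$). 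Since $\supp\boldsymbol\xi_\varepsilon\subset\Omega_F^{\bfeta_\varepsilon}(t)\cap B$, all structure, viscoelastic regularization and contact-penalization contributions in \eqref{momeqweakh} vanish upon testing. The remaining fluid terms and initial data are bounded by the same interpolation used in Lemma \ref{improve:est:bnd}, with a constant depending only on $\|\phi_\varepsilon\|_{C^{1,\alpha}}$ and the initial energy, hence on $I,B$.

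This yields $\int_I\int_B(\rho_\varepsilon^\gamma+\varepsilon\rho_\varepsilon^\beta)\,\nabla\cdot\boldsymbol\xi_\varepsilon\leq C(I,B)$. Splitting
\[
\nabla\cdot\boldsymbol\xi_\varepsilon=\psi\,\lambda\,d_\varepsilon^{\lambda-1}+(\text{uniformly bounded}),
\]
where $d_\varepsilon(t,x)$ is the vertical distance from $x$ to the graph of $\phi_\varepsilon$ (comparable to $\dist(x,\partial\Omega_F^{\bfeta_\varepsilon}(t))$ up to a factor depending only on $\|\phi_\varepsilon\|_{C^{1,\alpha}}$), and using $\lambda<1$ so that $d_\varepsilon^{\lambda-1}\to\infty$ as $d_\varepsilon\to 0$, for any $\varepsilon'>0$ one chooses $\delta'>0$ small enough that $\psi\lambda d_\varepsilon^{\lambda-1}\geq 1/\varepsilon'$ on $B\cap\{d(x,\partial\Omega_F^{\bfeta_\varepsilon}(t))<\delta'\}$. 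Absorbing the bounded part into the right-hand side and rearranging gives the claimed bound with constant $C(I,B)\varepsilon'$.

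The main obstacle is the uniform-in-$\varepsilon$ $C^{1,\alpha}$-stability of the local graphs $\phi_\varepsilon$: essentially a quantitative implicit function theorem based on local injectivity and uniform positivity of $\det\nabla\bfeta$ at the preimages of $x_0$, together with the $C^{1,\alpha}$-compact embedding applied to $\bfeta_\varepsilon\to\bfeta$. Once this is established, the rest of the argument is a direct adaptation of Lemma \ref{improve:est:bnd} with the decisive improvement that the test function now depends only on the limiting local geometry near $x_0$, so its bounds, and therefore the resulting pressure estimate, are independent of $\varepsilon$.
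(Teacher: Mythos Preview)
Your proposal is correct and follows essentially the same route as the paper: localize near $x_0$, use the strong convergence $\bfeta_\varepsilon\to\bfeta$ (in $C^{0,a}([0,T];C^{1,b}(\Omega_S))$) to obtain a uniform single-graph representation $\phi_\varepsilon$ of $\partial\Omega_F^{\bfeta_\varepsilon}$ on $I\times B$, and repeat the distance-to-graph test-function construction of Lemma~\ref{improve:est:bnd} with a (space--time) cutoff. One minor inconsistency worth cleaning up: your closing sentence says the test function ``depends only on the limiting local geometry near $x_0$,'' but your actual (and correct) construction uses $\phi_\varepsilon$---the $\varepsilon$-independence of the constants comes from the uniform $C^{1,\alpha}$-control on $\phi_\varepsilon$, not from using the limit graph; also, to localize the weak formulation to $I$ you should include a temporal cutoff in $\psi$, as the paper does.
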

\begin{proof}
The proof is similar to the one of Lemma $\ref{improve:est:bnd}$, however we can't use the same construction since we cannot ensure that there is no contact with the rigid boundary or self-contact. The function that is constructed here is similar, but uses only one graph function instead of multiple ones.

First, by Lemma $\ref{lemma:contact1}$ Claim 4, one has that either $x_0\in \bfeta(t_0,I_\bfeta(t_0))$ or $x_0\in \partial\Omega\setminus \bfeta(t_0,C_\bfeta(t_0))$. Since the later option is simpler, we will choose the former, so there is an $y_0\in I_\bfeta(t_0)$ such that $\bfeta(t_0,y_0)=x_0$. Now, we can choose $r>0$ and an interval $I''\ni t_0$ so that for all $t\in I''$, $B(x_0,3r)\cap \partial \Omega_F^\bfeta(t)$ can be represented as a graph of a function $\phi:I''\times V\to \mathbb{R}$ so that fluid domain lies below it. Note that $B(x_0,3r)$ then intersects the boundary $\partial \Omega_F^\bfeta(t)$ only once, and moreover $\sup\limits_{t\in I''}\text{dist}(x_0,\bfeta(\partial I_\bfeta(t)))\geq 3r$. Now, the corresponding coordinate system and the coordinate transform are denoted as $(x^1,x^2,x^3)$ and $\bb{s}(\bb{v}):=\bb{r}+A\bb{v}$, with the later consisting of translation vector $\bb{r}$ and rotation matrix $A$. Then, due to the strong convergence of $\bfeta_{\varepsilon}$ in $C^{0,a}([0,T]; C^{1,b}(\Omega_S))$ for some $a,b>0$, there is an interval $t_0 \in I'\subset I''$ and an $\varepsilon_0>0$ so that for all $0<\varepsilon<\varepsilon_0$, one has that $B(x_0,2r)\cap \partial \Omega_F^{\bfeta_\varepsilon}(t)$ can be represented as a graph of a function $\phi_\varepsilon: I'\times V\to \mathbb{R}$ in the same coordinate system $(x^1,x^2,x^3)$ as $\phi$. Finally, we choose some interval $t_0\in I\subset I'$ and define a cut-off function $\psi(t,x)$ that is $1$ on $I\times B(x_0,r)$, and vanishes outside of $I'\times B(x_0,2r)$. Similarly as in Lemma $\ref{improve:est:bnd}$, we define 
\begin{eqnarray*}
    v_\varepsilon(t,x^1,x^2,x^3):=\begin{cases} (x^3 - \phi(t,x^1,x^2))^\lambda,&  \quad x^3 \leq  \phi(t,x^1,x^2), \\
    0,& \quad \text{elsewhere},
    \end{cases}
\end{eqnarray*}
for $\frac{2\gamma-3}{6\gamma}<\lambda<1$, and
\begin{eqnarray*}
    \boldsymbol\xi_\varepsilon(t,x):=
\psi(t,x) v (t, \bb{s}^{-1}(x))\bb{s} (\bb{e}_3).
\end{eqnarray*}
Testing the momentum equation $\eqref{momeqweakh}$ with $\boldsymbol\xi_\varepsilon$, the desired inequality follows in the same way as in Lemma $\ref{improve:est:bnd}$.
\end{proof}
We can now conclude:
\begin{cor}
As $\varepsilon\to 0$, we have $ \rho_\varepsilon^\gamma+\varepsilon\rho^\beta\rightharpoonup \overline{p}+\pi$ in $ L_{w^*}^\infty(0,T; \mathcal{M}^+(\overline{\Omega_F^\bfeta(t)}))$, where $\overline{p}\in L^\infty(0,T; L^1(\Omega_F^\bfeta(t)))$ and $\pi \in  L_{w^*}^\infty(0,T; \mathcal{M}^+(\overline{\Omega_F^\bfeta(t)}))$ is supported on the set $(0,T]\times  \bfeta(\partial I_\bfeta(t))$.
\end{cor}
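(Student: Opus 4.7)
The plan is to extract a weak-* limit of $\rho_\varepsilon^\gamma+\varepsilon\rho_\varepsilon^\beta$ as a nonnegative Radon measure on $\overline{\Omega}$, then use Lemma~\ref{int:est:pr:eps} to identify the absolutely continuous interior density $\overline{p}$ and Lemma~\ref{loc:est:bnd} to localize the singular boundary part $\pi$. The starting point is the $\varepsilon$-uniform bound
\begin{eqnarray*}
 \sup_{\varepsilon>0}\|\rho_\varepsilon^\gamma + \varepsilon\rho_\varepsilon^\beta\|_{L^\infty(0,T;L^1(\Omega))} \leq C,
\end{eqnarray*}
which follows directly from the energy inequality of Definition~\ref{weaksolh}. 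Banach-Alaoglu then yields a subsequence and a $\mu\in L^\infty(0,T;\mathcal{M}^+(\overline{\Omega}))$ with $\rho_\varepsilon^\gamma+\varepsilon\rho_\varepsilon^\beta\rightharpoonup^\ast\mu$. Because Lemma~\ref{vanish:density} forces $\rho=0$ on $(0,T)\times\Omega_S^\bfeta(t)$ in the limit (the same argument applies here once kinematic coupling is available), the measure $\mu$ is time-slicewise supported on $\overline{\Omega_F^\bfeta(t)}$.

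\textbf{Absolutely continuous interior part.} For every $t_0\in(0,T)$ and every $S\Subset\Omega_F^\bfeta(t_0)$ with regular boundary, continuity of $\bfeta$ together with the strong convergence $\bfeta_\varepsilon\to\bfeta$ in $C([0,T]\times\Omega_S)$ provide an interval $I\ni t_0$ and some $\varepsilon_S>0$ such that $S\Subset\Omega_F^{\bfeta_\varepsilon}(t)$ for all $t\in I$ and $0<\varepsilon<\varepsilon_S$. Lemma~\ref{int:est:pr:eps} then delivers
\begin{eqnarray*}
 \int_{I\times S}\bigl(\rho_\varepsilon^{\gamma+a}+\varepsilon\rho_\varepsilon^{\beta+a}\bigr)\leq C(S),\qquad a=\tfrac{2}{3}\gamma-1>0,
\end{eqnarray*}
so the family is uniformly integrable on $I\times S$, and the restriction of $\mu$ there is absolutely continuous with $L^1$-density. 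A countable exhaustion of the open fluid cylinder by such $I\times S$, together with a diagonalization in $\varepsilon$, produces a single $\overline{p}\in L^1_{\mathrm{loc}}$; the global bound $\overline{p}\in L^\infty(0,T;L^1(\Omega_F^\bfeta(t)))$ is then immediate from the $L^\infty_tL^1_x$ estimate and Fatou's lemma.

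\textbf{Localization of the singular part.} Set $\pi:=\mu-\overline{p}$; this is a nonnegative measure supported on $\cup_{t\in(0,T)}\{t\}\times\partial\Omega_F^\bfeta(t)$. To prove $\supp\pi\subseteq(0,T]\times\bfeta(\partial I_\bfeta(t))$, pick any $(t_0,x_0)$ with $x_0\in\partial\Omega_F^\bfeta(t_0)\setminus\bfeta(\partial I_\bfeta(t_0))$. Lemma~\ref{loc:est:bnd} yields a space-time neighbourhood $I\times B(x_0,r)$ such that, for every $\varepsilon'>0$, there exists $\delta'>0$ with
\begin{eqnarray*}
 \int_I\int_{B(x_0,r)\cap\{d(x,\partial\Omega_F^{\bfeta_\varepsilon}(t))<\delta'\}}\bigl(\rho_\varepsilon^\gamma+\varepsilon\rho_\varepsilon^\beta\bigr)\leq C(I,B)\,\varepsilon'
\end{eqnarray*}
for all sufficiently small $\varepsilon$. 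Combining this with the uniform interior $L^1$ control on the complementary bulk region $B(x_0,r)\cap\{d(\cdot,\partial\Omega_F^{\bfeta_\varepsilon}(t))\geq\delta'\}$ from Step~2, passing $\varepsilon\to 0$ and then $\varepsilon'\to 0$ yields $\pi(I\times B(x_0,r))=0$. Since $(t_0,x_0)$ was arbitrary outside the contact set, the desired support inclusion follows.

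\textbf{Main obstacle.} The delicate point is ensuring that the hypotheses of Lemma~\ref{loc:est:bnd} are stable under $\varepsilon\to 0$: one must guarantee that $\partial\Omega_F^{\bfeta_\varepsilon}(t)$ admits a single-graph representation on $I\times B(x_0,r)$ in the same coordinate frame as $\partial\Omega_F^\bfeta(t)$, uniformly for small $\varepsilon$. This stability is provided by the strong convergence $\bfeta_\varepsilon\to\bfeta$ in $C([0,T];C^{1,b}(\Omega_S))$ and the fact that $I_\bfeta(t_0)$ is open in $\partial\Omega_S$ (Claim~3 of Theorem~\ref{lemma:contact2}), but the constants necessarily degenerate as $x_0$ approaches $\bfeta(\partial I_\bfeta(t))$, which is precisely why concentrations cannot be excluded there. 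A secondary subtlety is the interpretation of $L^\infty(0,T;\mathcal{M}^+(\overline{\Omega_F^\bfeta(t)}))$ with a time-dependent target: one works with the limit $\mu$ in the fixed ambient $\overline{\Omega}$ and uses the first observation above to conclude that $\mu$ actually lives on the moving set.
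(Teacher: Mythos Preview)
Your proposal is correct and follows essentially the same strategy as the paper: extract a measure limit from the $L^\infty_t L^1_x$ bound, use Lemma~\ref{int:est:pr:eps} to identify the absolutely continuous part $\overline{p}$ on interior cylinders, and use Lemma~\ref{loc:est:bnd} to rule out concentrations at regular boundary points, leaving only $\bfeta(\partial I_\bfeta(t))$ as possible support for $\pi$.

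The only organizational difference is that the paper packages the localization step as a global covering argument: it surrounds $\cup_t\{t\}\times\bfeta(\partial I_\bfeta(t))$ by an open set $S_\delta$ of measure $<\delta$, extracts a \emph{finite} subcover of the compact complement $([0,T]\times\partial\Omega_F^\bfeta(t))\setminus S_\delta$ by the neighborhoods $I_i\times B_i$ from Lemma~\ref{loc:est:bnd}, and then sends $\varepsilon'\to 0$ and $\delta\to 0$. Your pointwise argument---show $\pi(I\times B(x_0,r))=0$ for each non-contact boundary point and invoke the definition of support---is equivalent and arguably cleaner. One small imprecision: your appeal to Lemma~\ref{vanish:density} for the support claim on the solid domain is unnecessary here, since $\rho_\varepsilon\equiv 0$ on $\Omega_S^{\bfeta_\varepsilon}(t)$ is already established at the $\varepsilon$-level from the previous limit; no new argument is required.
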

\begin{proof}
Denote by $\pi$ the limit of $\rho_\varepsilon^\gamma+\varepsilon\rho^\beta$ in $ L_{w^*}^\infty(0,T; \mathcal{M}^+(\overline{\Omega_F^\bfeta(t)}))$ and let the function $\overline{p}\in L^\infty(0,T; L^1(\Omega_F^\bfeta(t)))$ be defined by weak convergence on all sets $I\times S$ as in Lemma $\ref{int:est:pr:eps}$. 

Fix a $\delta>0$. Since for all $t\in[0,T]$, $\bfeta(I_\bfeta(t))$ is a set of measure $\leq 2$, we can cover $\cup_{t\in (0,T]} \{t\}\times\bfeta(\partial I_\bfeta(t))$ with an open set $S_\delta\subset \mathbb{R}^4$ such that $|S_\delta|<\delta$. Now, for any pair $(t_i,x_i)$ such that $t_i\in (0,T)$ and $x_i\in \partial \Omega_F^\bfeta(t)\setminus \bfeta(\partial I_\bfeta(t_i))$, one has by Lemma $\ref{lemma:contact1}$ Claim 4 that either $x_i\in \bfeta(t_i,I_\bfeta(t_i))$ or $x_0\in \partial\Omega\setminus \bfeta(t_i,C_\bfeta(t_i))$. Therefore, by Lemma $\ref{loc:est:bnd}$, there is an $\varepsilon_i>0$, an interval $I_i\ni t_i$ and a ball $B_i= B(\bfeta(t_i,x_i),r_i)$, for some $r_i>0$, such that for all $\varepsilon'>0$, there is a $\delta'>0$ such that
\begin{eqnarray*}
    \int_I \int_{B_i\cap\{d(x, \partial\Omega_F^{\bfeta_\varepsilon}(t))<\delta'\}} \rho_\varepsilon^{\gamma}+\varepsilon\rho^\beta \leq C_i \varepsilon',
\end{eqnarray*}
for all $0<\varepsilon<\varepsilon_i$. Since $\big([0,T]\times\partial\Omega_F^\bfeta(t)\big)\setminus S_\delta$ is compact, we can choose a finite family $\{I_{i_k}\times B_{i_k}\}_{1\leq k\leq m}$ which covers this set. Denoting $\varepsilon_m=\min\{\varepsilon_{i_k}\}_{1\leq k \leq m}$ and $C_m=\max\{C_{i_k}\}_{1\leq k \leq m}$, one obtains
\begin{eqnarray*}
    \int_{([0,T]\times\{d(x, \partial\Omega_F^{\bfeta_\varepsilon}(t))<\delta' \})\setminus S_\delta} \rho_\varepsilon^{\gamma}+\varepsilon\rho^\beta
  \leq \sum_{j=1}^m\int_{I_j} \int_{B_j\cap\{d(x, \partial\Omega_F^{\bfeta_\varepsilon}(t))<\delta'\}} \rho_\varepsilon^{\gamma} \leq m C_m \varepsilon',
\end{eqnarray*}
for all $\varepsilon<\varepsilon_m$. Similarly as in Corollary $\ref{cor:lim}$, one has 
\begin{eqnarray*}
    \int_0^T \int_{([0,T]\times\Omega_F^{\bfeta_\varepsilon}(t))\setminus S_\delta} (\rho_\varepsilon^\gamma +\varepsilon\rho^{\beta+a})\boldsymbol\xi &=&   \int_{([0,T]\times\{d(x, \partial\Omega_F^{\bfeta_\varepsilon}(t))\geq \delta'\})\setminus S_\delta} (\rho_\varepsilon^\gamma +\varepsilon\rho^\beta)\varphi\\
    &&\quad +\int_{([0,T]\times\{d(x, \partial\Omega_F^{\bfeta_\varepsilon}(t))<\delta'\})\setminus S_\delta} (\rho_\varepsilon^\gamma+\varepsilon\rho^{\beta})\varphi \\
    && \to \int_0^T \int_{([0,T]\times\{d(x, \partial\Omega_F^\bfeta(t)\geq \delta'\}))\setminus S_\delta}\overline{p} \varphi\\
    &&\quad +\int_0^T\langle \pi, \varphi\rangle_{[\mathcal{M}^+,C](\{d(x, \partial\Omega_F^\bfeta(t))<\delta'\}]\setminus S_\delta(t))}, \quad \text{as } \varepsilon\to 0,
\end{eqnarray*}
for all $\boldsymbol\xi \in C^\infty([0,T]\times \Omega)$, where $S_\delta(t)=\{x\in \mathbb{R}^3: (t,x)\in S_\delta\}$ is the slice at time $t$, so once again
\begin{eqnarray*}
    \int_0^T\langle \pi, \varphi\rangle_{[\mathcal{M}^+,C](\{d(x, \partial\Omega_F^\bfeta(t))<\delta'\}]\setminus S_\delta(t))} \leq  m C_m \varepsilon',
\end{eqnarray*}
so by passing to the limit $\varepsilon'\to 0$, one has that $\pi = \overline{p}$ on $([0,T]\times \Omega_F^\bfeta(t))\setminus S_\delta$ for any\footnote{Here $j$ and $C_m$ might depend on $\delta$. This however does not affect the limit $\varepsilon'\to 0$.} $\delta>0$. Finally, passing to the limit $\delta \to 0$, the desired conclusion follows and the proof is complete.
\end{proof}
\begin{rem}
Note that the defect measure $\pi$ in Definition $\ref{weaksol}$ is supported only on $(0,T]\times  \bfeta(\partial N_\bfeta(t))$ because the test functions are compactly supported in $\Omega$, and as such is zero on $(0,T]\times \bfeta(\partial C_\bfeta(t))$ since $C_\bfeta(t))\subset \partial\Omega$. 
\end{rem}
The function $\overline{p}$ can once again be identified as $\rho^\gamma$. Note that when $\gamma<\frac95$, one needs to use the approach developed by Feireisl which utilizes so called oscillations defect measures (see \cite{FeireislSquared} and also \cite{FeireislCompressible01}).

\vspace{.1in}
\noindent{\bf Acknowledgments:} 
We would like to express our gratitude to the anonymous referees for their valuable suggestions and comments, which greatly contributed to the improvement of the paper.
\\
M.K. was partially supported by the Czech Science Foundation (GAČR) under
grant No. 23-04766S and ERC-CZ grant LL2105 as well as the Charles
University research program No. UNCE/SCI/023. S.T. was supported by Provincial Secretariat for Higher Education and Scientific Research of Vojvodina, Serbia, grant no 142-451-2593/2021-01/2.

\bibliography{ElasticBodyFSI}
\bibliographystyle{plain}

\Addresses

\end{document}